\definecolor{lblue}{rgb}{0.0, 0.313, 0.608}
\definecolor{lred}{rgb}{1.0,0.5,0.5}
\newcommand{\N}{\mathbb{N}}		   
\newcommand{\Z}{\mathbb{Z}}				
\newcommand{\C}{\mathbb{C}}	
\newcommand{\T}{\mathbb{T}}	
\newcommand{\R}{\mathbb{R}}	
\newcommand{\e}{\varepsilon}
\newcommand{\I}{\mathrm{i}}
\DeclarePairedDelimiter\abs{\lvert}{\rvert}
\newtheorem{definition}{Definition}[section]
\newtheorem{thm}[definition]{Theorem}
\newtheorem{cor}[definition]{Corollary}%[section]
\newtheorem{lem}[definition]{Lemma}%[section]
\newtheorem{prop}[definition]{Proposition}
\newtheorem{rem}[definition]{Remark}%[section]
\numberwithin{equation}{section}
\DeclareMathOperator{\clos}{clos}
\DeclareMathOperator{\dist}{dist}
\DeclareMathOperator{\Id}{Id}
\DeclareMathOperator{\range}{range}
\DeclareMathOperator{\spa}{span}
\begin{document}

\title{Traveling waves for a quasilinear wave equation}

\author{Gabriele Bruell}
\address{Centre for Mathematical Sciences, Lund University, 22100 Lund, Sweden}
\email{gabriele.brull@math.lth.se}

\author{Piotr Idzik}
\address{PII Pipetronix GmbH, Lorenzstra{\ss}e 10, 76297 Stutensee, Germany}
\email{piotr.idzik@bakerhughes.com}

\author{Wolfgang Reichel}
\address{Institute for Analysis, Karlsruher Institute of Technology (KIT), D-76128 Karlsruhe, Germany}
\email{wolfgang.reichel@kit.edu}

\date{\today }

\subjclass[2000]{Primary: 35B32, 35L72; Secondary: 35C07, 35Q61}
\keywords{nonlinear Maxwell equations, quasilinear wave equation, traveling wave, bifurcation}

\begin{abstract}  We consider a 2+1 dimensional  wave equation appearing in the context of polarized waves for the nonlinear Maxwell equations. The equation is quasilinear in the time derivatives and involves two material functions $V$ and $\Gamma$. We prove the existence of traveling waves which are periodic in the direction of propagation and localized in the direction orthogonal to the propagation direction. Depending on the nature of the nonlinearity coeffcient $\Gamma$ we distinguish between two cases: (a) $\Gamma\in L^\infty$ being regular and (b) $\Gamma=\gamma\delta_0$ being a multiple of the delta potential at zero. For both cases we use bifuraction theory to prove the existence of nontrivial small-amplitude solutions. One can regard our results as a persistence result which shows that  guided modes known for linear wave-guide geometries survive in the presence of a nonlinear constitutive law. Our main theorems are derived under a set of conditions on the linear wave operator. They are subsidised by explicit examples for the coefficients $V$ in front of the (linear) second time derivative for which our results hold. 
\end{abstract}

\maketitle

%\bigskip

\section{Introduction}

Of concern is the following 2+1 dimensional quasilinear wave equation

%Consider the following quasilinear wave equation arising in the context of polarized waves for the Maxwell equations:

\begin{equation}\label{eq:Q}
-\Delta u+\partial_t^2\left(V(\lambda,y)u+\Gamma(y)u^3\right)=0,
\end{equation}
 which appears in the context of polarized waves for the nonlinear Maxwell equations. Here, $u=u(t,x,y)$ is the unkown depeding on time $t\in \R_+$ and the two spatial variables $x,y$. 
 We assume $u$ to be periodic in $x$-direction and localized in $y$-direction. In what follows, we denote by $\T$ the one dimensional flat $2\pi$-periodic torus, so that $(x,y)\in \T \times \R$. The potentials $V(\lambda,\cdot)$ and $\Gamma$ depend only on $y$ and incorporate material properties. Here, $\lambda \in \R$ is a parameter. The function $\Gamma$  might be a bounded function  (referred to as \emph{regular $\Gamma$}) or a multiple of a delta potential at $y=0$ (referred to as \emph{distributional $\Gamma$}).
 
 \medspace

 To motivate our interest in \eqref{eq:Q}, let us explain how it appears in the context of electromagnetics. Recall that the
%
%
%
%Before we state our results we shortly motivate \eqref{eq:Q} and give some references to the literature. We start by recalling 
Maxwell equations in the absence of charges and currents are given by
\begin{align*}
	\nabla\cdot\mathbf{D}&=0, &\nabla\times\mathbf{E}\,=&-\partial_t\mathbf{B}, &\mathbf{D}=&\varepsilon_0\mathbf{E}+\mathbf{P}(\mathbf{E}), \\
	\nabla\cdot\mathbf{B}&=0, &\nabla\times\mathbf{H}=&\,\partial_t\mathbf{D}, &\mathbf{B}=&\mu_0\mathbf{H}. 
\end{align*}
The modeling of the underlying material is done by making assumptions on the form of the polarization field $\mathbf{P}$. Here, we assume that $\mathbf{P}$ depends instantaneously on the electric field $\mathbf{E}$ as follows 
$$
\mathbf{P}(\mathbf{E})=\varepsilon_0\chi_1(\mathbf{x})\mathbf{E}+\varepsilon_0\chi_3(\mathbf{x})\abs{\mathbf{E}}^2\mathbf{E}
$$ with $\mathbf{x}=(x,y,z)\in\R^3$, cf. \cite{agrawal}, Section~2.3. For simplicity we take $\chi_1, \chi_3$ as given scalar functions instead of the more general matrix/tensor structure of these quantities. The values $\varepsilon_0, \mu_0$ are constant such that $c^2=(\varepsilon_0\mu_0)^{-1}$ and $c$ is the speed of light in vacuum. 
%We suppose that $\chi_1, \chi_3$ are given material functions. 
By direct calculations from Maxwell's equations one obtains the second-order quasilinear wave-type equation for the electric field
\begin{align}
	0=\nabla\times\nabla\times\mathbf{E} +\partial_t^2\left( V(\mathbf{x})\mathbf{E}+\Gamma(\mathbf{x})\abs{\mathbf{E}}^2\mathbf{E}\right),
	\label{curlcurl}
\end{align}
where $V(\mathbf{x})=\mu_0\varepsilon_0\left(1+\chi_1(\mathbf{x})\right)$ and $\Gamma(\mathbf{x})=\mu_0\varepsilon_0\chi_3(\mathbf{x})$. The magnetic induction $\mathbf{B}$ can be retrieved from $\nabla\times\mathbf{E}=-\partial_t\mathbf{B}$ by time-integration  and it will satisfy $\nabla\cdot\mathbf{B}=0$ provided it does so at time $t=0$. By assumption the magnetic field is given by $\mathbf{H}=\frac{1}{\mu_0} \mathbf{B}$ and it satisfies $\nabla\times\mathbf{H}=\partial_t\mathbf{D}$. It remains to check that the displacement field $\mathbf{D}$ satisfies the Gauss law $\nabla\cdot\mathbf{D}=0$ in the absence of external charges. This follows directly from the constitutive equation $\mathbf{D}=\varepsilon_0(1+\chi_1(\mathbf{x}))\mathbf{E}+\varepsilon_0\chi_3(\mathbf{x})\abs{\mathbf{E}}^2\mathbf{E}$ and the assumption of the polarized form of the electric field  
$$
\mathbf{E}(\mathbf{x},t)=(0,0,u(t,x,y))^T.
$$
If we assume additionally that $V(\mathbf{x})=V(y)$ and $\Gamma(\mathbf{x})=\Gamma(y)$ then the quasilinear vectorial wave-type  equation \eqref{curlcurl} turns into the scalar equation \eqref{eq:Q} for $u=u(t,x,y)$.

\medskip

% Furthermore, $L$ is a linear operator representing a $y$-dependent instantaneous or retarded material law.  Let
%\[
%Mu(t,x,y):=\int_{-\infty}^tK(t-\tau)u(\tau,x,y)\,d\tau,
%\]
%where $K$ is a given integration kernel.
%If $K=\delta_0$, then $Mu=u$ represents an \emph{instantaneous material law}. Here, $\delta_0$ denotes the delta potential supported in zero. If $K\in L^\infty(\R)$, then $M$ describes a genuinely \emph{retarded material law}.
We study the existence of traveling wave solutions of \eqref{eq:Q} propagating in $x$-direction for certain classes of potentials $V$. More precisely, we consider potentials of the form
 \begin{equation*}\label{eq:V}
 V(\lambda,y)= \lambda V_0(y)+V_1(y),\qquad \lambda \in \R,
 \end{equation*}
  where $V_0\in L^\infty(\R)$ and $V_1$ is a distribution (e.g a $\delta$-potential). If 
  \begin{equation}\label{eq:trav}
  u(t,x,y)=\Phi(x-t,y)
  \end{equation} is a traveling wave solution of \eqref{eq:Q}, propagating in $x$-direction with wave speed $v=1$, then $\Phi:\T \times \R \to \R$ satisfies 
\begin{equation}\label{eq:Q_trav}
-\Phi_{yy} - (1-\lambda V_0(y)-V_1(y))\Phi_{xx}+\Gamma(y)(\Phi^3)_{xx}=0.
\end{equation}
%where
%\[
%L_{steadSympathy deutschy} \Phi(x,y)=\int_{-\infty}^0 K(-\sigma)\Phi(x-\sigma,y)\,d\sigma.
%\]
The parameter $\lambda\in\R$ will serve as a bifurcation parameter. 
%{\color{orange}
%The reason for introducing $\lambda$ as a bifurcation parameter as opposed to the somewhat more natural choice of the wave speed $c$ relies on the distributive character of  $V_1$. The underlying function spaces for our analysis depend explicitly on the distribution $V_1$. Using the wave speed as a bifurcation parameter, it would appear as a scalar factor of $V_1$. This however causes trouble in the \emph{transvesaltiy condition} of the Crandall--Rabinowitz theorem, which requires to take a certain derivative with respect to the bifurcation parameter.
%}
 One might ask why we introduced $\lambda$ as a bifurcation parameter in the form $V(\lambda,y)= \lambda V_0(y)+V_1(y)$ and not in the more intuitive form $V(\lambda,y)= \lambda (V_0(y)+V_1(y))$ -- the latter giving $\sqrt{\lambda}$ the meaning of the propagation speed of the traveling wave. The reason lies in the distributional character of $V_1$. The choice of the underlying function spaces for our analysis allows to formulate a (suitably defined, cf. Lemma~
\ref{lem:1D}) bounded inverse of $\tilde L_\lambda$, which is a slightly modified version of the wave operator $L_\lambda = -\partial_y^2 -(1-\lambda V_0(y)-V_1(y))\partial_x^2$. Extending the multiplication with $\lambda$ also to the distributional part $V_1$ causes a difficulties when differentiating wit respect to $\lambda$, cf. Remark~\ref{rem:trouble}.

\medskip

\emph{
Throughout the paper, a function $u$ is called a traveling wave solution of \eqref{eq:Q} if $u$ takes the form \eqref{eq:trav} and  $\Phi$ is a solution of \eqref{eq:Q_trav}, which is periodic in its first and localized in its second component.
}

\medskip

We investigate the existence of nontrivial solutions $\Phi_\lambda$ of \eqref{eq:Q_trav} corresponding to the parameter $\lambda$  under certain assumptions on the potential $V$; thereby providing the existence a nontrivial traveling wave solution $u_\lambda$ of \eqref{eq:Q}. We apply bifurcation theory for the parameter $\lambda$ to find nontrivial solutions of \eqref{eq:Q_trav}. Our aim is to analyze the existence of nontrivial solutions in a way as general as possible, finding key properties of associate linear operators, which guarantee the existence of solutions via bifurcation theory. Eventually, we provide examples of specific potentials $V$ for which these properties can be verified.
In particular, the following cases are under consideration:

\medskip

\begin{itemize}
	\item[(P1)] \textbf{$V$ is a $\delta-$potential on a positive background}, that is $V$ is a distribution of the form
	\[
	V(\lambda, y)= \lambda +\alpha \delta_0(y).
	\] 
	\item[(P2)] \textbf{$V$ is a $\delta-$potential on a step background}, that is $V$ is a distribution of the form
		\[
		V(\lambda,y)=\lambda \textbf{1}_{|y|\geq b}+ \beta  \textbf{1}_{|y|<b}+ \alpha \delta_0(y).
		\]
\end{itemize}

\medskip

Concerning the nonlinear potential $\Gamma$ we distinguish between \textbf{regular $\Gamma$}, that is $\Gamma\in L^\infty(\R)$, and \textbf{distributional $\Gamma$ }, that is $\Gamma=\gamma\delta_0$. The main results for regular $\Gamma$ are shown in Section~\ref{subsec:regular} and for distributional $\Gamma$ in Section~\ref{subsec:distributional}.

\medskip

Let us also comment on related work.	Problem \eqref{eq:Q} has been considered in \cite{PelSimWeinstein} where spatially localized traveling wave solutions
of the 1+1-dimensional quasi-linear Maxwell model were investigated. The authors assume that $V(y)$ is a periodic arrangement of delta potentials. Using a multiple scale ansatz in fast and slow time, the field profile is expanded into infinitely many modes which are quasiperiodic in time (time-periodic both in the fast and slow time variables). Using local bifurcation methods the authors solve a related system which is homotopically linked to the Maxwell problem written as any infinite coupled system. It is not clear if the local bifurcation connects the related system and the Maxwell problem but numerical results support the existence
of spatially localized traveling waves. In \cite{dohnal_doerfler} and \cite{dohnal_romani_2021} another approximation (including error-estimates) of a version of \eqref{eq:Q} with periodic coefficients by finitely many coupled modes near band edges has been performed both analytically and numerially.  

In the studies of the nonlinear Maxwell-system \eqref{curlcurl}, often monochromatic waves $\mathbf{E}(\mathbf{x})= U(\mathbf{x})e^{\I\omega t}+c.c.$ are considered. Since a typical cubic nonlinearity generates higher harmonics, they either need to be neglected (leading to an error), or the constitutive equation for $\mathbf{D}$ is replaced by a time-averaged nonlinearity $\mathbf{D}=\varepsilon_0(1+\chi_1(\mathbf{x}))\mathbf{E}+\varepsilon_0\chi_3(\mathbf{x})\frac{1}{T}\int_0^T\abs{\mathbf{E}}^2\,dt \mathbf{E}$, cf. \cite{stuart_1990, stuart_1993, stuart_zhou_2010, Mederski_2015, BDPR_2016, dohnal_romani_doubly, Mederski_Reichel} and particularly the two survey papers \cite{Bartsch_Mederski_survey, Mederski_survey_2}.

In contrast to the previously cited works, our approach is genuinely polychromatic and does not rely on time-averaged material laws. In our solution ansatz we allow for harmonics of arbitrary order and we treat \eqref{eq:Q} without any approximation. Recently in \cite{kohler_reichel} a similar approach was taken and spatially localized, time-periodic solution of \eqref{eq:Q} were obtained via variational methods. The result of the present paper and \cite{kohler_reichel} are complementary in the following sense: (a) in \cite{kohler_reichel} only distributional $\Gamma$ is considered whereas in the present paper we also allow for regular $\Gamma\in L^\infty$; (b) in the case of distributional $\Gamma$, \cite{kohler_reichel} only treats $V\in L^\infty$ whereas in the present paper we always have a delta potential contributing to $V$. Variational methods as in \cite{kohler_reichel} have the advantage of producing solutions which may be far away from the trivial solution whereas local bifurcation methods as in the present paper produce solutions in the vicinity of zero. On the other hand, the local bifurcation method leads to more precise information about the actual shape of the bifurcating branch of solutions. 

\medspace

\textbf{Outline of the paper.} We close the introduction with a brief outline of the paper. In Section~\ref{S:main_results}, we collect our main results. In Theorem~\ref{main1} and Theorem~\ref{main2} we provide a set of conditions on the linear wave operator guaranteeing the existence of nontrivial traveling wave solutions of \eqref{eq:Q} for regular and distributional $\Gamma$, respectively. Subsequentially, we present in Corollary~\ref{cor:p1}, Corollary~\ref{cor:p2}, and Corollary~\ref{cor:p1_distrib}, Corollary~\ref{cor:p2_distrib} particular examples in the form of $(P1)$ and $(P2)$ for regular and distributional $\Gamma$. In Section~\ref{S:notation} we fix some notation. The remaining sections~\ref{S:Gamma_b}--\ref{S:distrib} are devoted to the proofs of our main results. To be more presice, in Section~\ref{S:Gamma_b} we prove the existence result in Theorem~\ref{main1} for regular $\Gamma$ followed by the proofs of Corollary~\ref{cor:p1} and Corollary~\ref{cor:p2} in Section~\ref{S:regular}, which provide specific examples. Similarly, we prove in Section~\ref{S:Gamma_d} the existence result in Theorem~\ref{main2} for distributional $\Gamma$ and finalize our studies in Section~\ref{S:distrib} with the proofs of Corollary~\ref{cor:p1_distrib} and Corollary~\ref{cor:p2_distrib} on specific examples in the case of distributional $\Gamma$. In the appendix we collect auxiliary results.

\bigskip

\section{Main results} \label{S:main_results}

 We are looking for solutions of \eqref{eq:Q_trav} of the form
\begin{equation}\label{eq:form}
\Phi(x,y;\lambda)=\sum_{k\in \N}\phi_k(y;\lambda)\sin(kx).
\end{equation}

Our analysis is going to be divided into two parts separating the case when $\Gamma$ is regular in the sense that $\Gamma \in L^\infty(\R)$  and the case when $\Gamma$  is distributional and takes the extreme form of a $\delta$-potential.  This is essentially due to the fact that in the former case we are concerned with a nonlinear equation on the domain $\T\times\R$, while in case of $\Gamma$ being a $\delta$-potential the problem can be viewed as a linear equation on $\T\times \R\setminus\{0\}$ -- which can be solved separately -- equipped with a nonlinear boundary condition at $x=0$ induced by the delta potential.

\medskip

\subsection{Main result for regular $\Gamma$} \label{subsec:regular}
Let us start with the definition of a weak solution of \eqref{eq:Q_trav} in the case when $\Gamma \in L^\infty(\R)$.

\begin{definition}[Weak solution in the case of regular $\Gamma$] \label{solution_regular}
		\emph{
	We say that $\Phi\in H^2(\T;L^2(\R))\cap H^1(\T;H^1(\R))$ is a \emph{weak solution} of \eqref{eq:Q_trav} if and only if
	\begin{equation*}
	\int_{\T}\int_{\R} \Phi_y\Psi_y - \left(1-\lambda V_0(y)\right)\Phi_{xx}\Psi\,dy\,dx - \int_\T \langle V_1(\cdot)\Phi_x(x,\cdot), \Psi_x(x,\cdot) \rangle\,dx  
	+ \int_\T\int_\R (\Phi^3)_{xx}\Psi \,dy\,dx=0
	\end{equation*}
	for any $\Psi \in H^1(\T;H^1(\R))$. Here, $\langle \cdot, \cdot \rangle$ is the dual pairing between $H^{-1}(\R)$ and $H^1(\R)$.
}
\end{definition}

\begin{rem} \emph{ 
		\begin{itemize}
			\item[(a)] We consider $V_1$ as a bounded linear operator from $H^1(\R)$ into $H^{-1}(\R)$. When $V_1=\delta_0$ this means that for $f,g\in H^1(\R)$ we have $\langle V_1 f,g\rangle = f(0)g(0)$, i.e., since $f\in C(\R)$ it multiplies $\delta_0$ and generates $f(0)\delta_0$ as a distribution acting on $g$.\\
\item[(b)]  Clearly $\Phi_{xx}\in L^2(\T\times\R)$. We shall see in Section~\ref{S:Gamma_b} (cf. \eqref{infinity}, \eqref{l4} in Lemma~\ref{lem:prop_F}) that also $\Phi\in L^\infty(\T\times\R)$ and $\Phi_x \in L^4(\T\times\R)$ so that $(\Phi^3)_{xx}=3\Phi^2\Phi_{xx}+6\Phi\Phi_x^2\in L^2(\T\times\R)$.
\end{itemize}
}
\end{rem}

%{\color{violet}
%\begin{rem} \label{rem:regularity}
%If $M_{steady}$ is the identity or a smoothing operator and $f:=M_{steady}\Phi$, then $f\in H^{2}(\T;L^2(\R))\cap H^{1}(\T;H^1(\R))$ and
%\[
%(f^3)_{xx}= 6 f (f_x)^2+3 f^2f_{xx}.
%\]
% First note that $f\in  H^{2}(\T;L^2(\R))\cap H^{1}(\T;H^1(\R))$ implies 
%\begin{align*}
%\|f\|_\infty =& \sup_{x\in \R,y\in \T} \Bigl|\sum_{k\in \N} \hat f(k,y)\sin(kx)\Bigr| \leq \sum_{k\in \N} \|\hat f(k)\|_{L^\infty} \leq \sum_{k\in\N} \|\hat f(k)\|_{H^1} \\
%& \leq \Bigl(\sum_{k\in \N} \frac{1}{k^2}\Bigr)^\frac{1}{2} \Bigl(\sum_{k\in \N} k^2 \|\hat f(k)\|_{H^1}^2\Bigr)^\frac{1}{2}\lesssim\|f\|_{H^1(\T,H^1(\R))}.
%\end{align*} Moreover
%\begin{align*}
%\Bigl(\int_\T\int_\R |f_x|^4 \,dy\,dx\Bigr)^{1/4}  & \leq \sum_{k\in \N} \|k \hat f(k)\|_{L^4(\T\times\R)} = \sqrt[4]{2\pi} \sum_{k\in \N} |k|\|\hat f(k)\|_{L^4} \lesssim \sum_{k\in \N} |k| \|\partial_x \hat f(k)\|_{L^2}^\frac{1}{4} \|\hat f(k)\|_{L^2}^\frac{3}{4}
%\end{align*}
%by the Gagliardo-Nirenberg inequality. Using a triple H\"older-inequality we obtain
%\begin{align*}
%\Bigl(\int_\T\int_\R |f_x|^4 \,dy\,dx\Bigr)^\frac{1}{4} & \lesssim \Bigl(\sum_{k\in \N} k^2 \|\partial_x \hat f(k)\|_{L^2}^2\Bigr)^\frac{1}{8} \Bigl(\sum_{k\in \N} k^4 \|\hat f(k)\|_{L^2}^2\Bigr)^\frac{3}{8} \Bigl(\sum_{k\in \N} k^{-3/2}\Bigr)^\frac{1}{2}  \label{l4} \\
%& \lesssim \|f\|_{H^1(\T,H^1(\R))}^\frac{1}{4} \|f\|_{H^2(\T,L^2(\R))}^\frac{3}{4}. \nonumber
%\end{align*}
%We deduce that $(f^3)_{xx}\in L^2(\T;L^2(\R))$, so that the last integral in the definition of a weak solution is bounded.
%\end{rem}
%}

\medskip

If $\Gamma \in L^\infty(\R)$, the ansatz in \eqref{eq:form} allows us to reduce the problem of finding  nontrivial solutions of \eqref{eq:Q_trav} to studying spectral properties of the family of linear wave operators
\[
	L_\lambda^k:=-\frac{d^2}{dy^2}+k^2(1-\lambda V_0(y)-V_1(y))\qquad\mbox{for}\quad k\in \N.
\]
We prove the following theorem:

\begin{thm}[Existence of traveling waves for regular $\Gamma$]
\label{main1}
 Assume that $\Gamma \in L^\infty(\R)$,  the potential $V$ is given by
 \[
 V(\lambda,y)=\lambda V_0(y) + V_1(y)
 \]
  and
\begin{itemize}
	\item[$(L0)$] $V_0\in L^\infty(\R)$ and $V_1: H^1(\R)\to H^{-1}(\R)$ is bounded;
	\item[$(L1)$] for every fixed $k\in \N$ and $\lambda\in \R$ the operator $L_\lambda^k: D(L_\lambda^k)\subset L^2(\R)\to L^2(\R)$ is self-adjoint;
	\item[$(L2)$] there exists a wavenumber $k_*\in \N$, a value $\lambda_*\in\R$, and an open interval $I_{\lambda_*}\subset \R$  containing $\lambda_*$ such that zero is an isolated simple eigenvalue of $L_{\lambda_*}^{k_*}$
	and $0\in \rho(L_{\lambda}^k)$ for any $(k,\lambda)\in \N\times I_{\lambda_*}$ with $(k,\lambda)\neq (k_*,\lambda_*)$;
	\item[$(L3)$] if $L_\lambda^k \phi =f$ for some $f\in L^2(\R)$, then
	\[
	\|\phi\|_{L^2(\R)}\lesssim \frac{1}{k^2}\|f\|_{L^2(\R)}\qquad\mbox{and}\qquad	\|\phi^\prime \|_{L^2(\R)}\lesssim \frac{1}{k}\|f\|_{L^2(\R)}
	\]
	uniformly for $\lambda\in I_{\lambda_*}$ and $k\in \N$ sufficiently large.
\end{itemize}
If in addition $V_0$ satisfies the transversality condition
\begin{equation}\label{eq:t}
\langle V_0 \phi^*,\phi^* \rangle_{L^2(\R)} \neq  0,
\end{equation}
	where $\phi^*$ spans the one-dimensional kernel of $L_{\lambda_*}^{k_*}$,
then there exists $\e_0>0$ and a smooth curve through $(0,\lambda_*)$, 

\[
	\{(\Phi(\e),\lambda(\e))\mid |\e|<\e_0\}\subset (H^2(\T;L^2(\R))\cap H^1(\T;H^1(\R)))\times I_{\lambda_*},
	\]
	of nontrivial solutions of \eqref{eq:Q_trav} with 
	\begin{align*}
	\Phi(0)&=0,  && D_\e \Phi(0)(x,y)=\phi^*(y)\sin(k_*x), \\
	\lambda(0)&=\lambda_*, && \dot \lambda(0)=0, \quad \ddot \lambda(0)= -\frac{3}{2}\frac{\int_{\R}\Gamma(y)(\phi^*)^4(y)\,dy}{\int_\R V_0(y)(\phi^*)^2(y)\,dy}.
	\end{align*}
\end{thm}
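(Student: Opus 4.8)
The plan is to cast \eqref{eq:Q_trav} as an operator equation $F(\Phi,\lambda)=0$ on the space $X=H^2(\T;L^2(\R))\cap H^1(\T;H^1(\R))$ and to apply the Crandall--Rabinowitz theorem on bifurcation from a simple eigenvalue. First I would record the trivial branch: encoding the weak formulation of Definition~\ref{solution_regular} into a map $F:X\times\R\to Y$ (with $Y$ the dual of the test space, or equivalently after composing with the bounded inverse of the modified operator $\tilde L_\lambda$ from Lemma~\ref{lem:1D}), one has $F(0,\lambda)=0$ for all $\lambda$. The nonlinear term $\Phi\mapsto\Gamma(\Phi^3)_{xx}=\Gamma(3\Phi^2\Phi_{xx}+6\Phi\Phi_x^2)$ is a cubic polynomial map, and by the embeddings $\Phi\in L^\infty(\T\times\R)$, $\Phi_x\in L^4(\T\times\R)$ of Lemma~\ref{lem:prop_F} it is smooth from $X$ into $L^2(\T\times\R)$; hence $F$ is smooth. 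Inserting the Fourier ansatz \eqref{eq:form} diagonalizes the linearization at $\Phi=0$ into the family $L_\lambda^k$, i.e. $D_\Phi F(0,\lambda)[\sum_k\psi_k\sin(kx)]$ corresponds mode-wise to $\sum_k L_\lambda^k\psi_k\,\sin(kx)$.

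Next I would verify the structural hypotheses of Crandall--Rabinowitz at $(0,\lambda_*)$. By $(L2)$ the only mode with nontrivial kernel is $k=k_*$, and there $\ker L_{\lambda_*}^{k_*}=\spa\{\phi^*\}$ is one-dimensional, so $\ker D_\Phi F(0,\lambda_*)=\spa\{\phi^*(y)\sin(k_*x)\}$. The Fredholm index-zero property is the delicate point: using self-adjointness $(L1)$ the range on mode $k_*$ is the $L^2$-orthogonal complement of $\phi^*$, while on every other mode $(L2)$ gives $0\in\rho(L_\lambda^k)$, and the uniform resolvent bounds of $(L3)$ let me sum the mode-wise inverses into a bounded inverse of $D_\Phi F(0,\lambda_*)$ restricted to the closed complement of the critical mode. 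This shows $D_\Phi F(0,\lambda_*)$ is Fredholm of index zero with one-dimensional kernel and cokernel. For transversality I compute $\partial_\lambda D_\Phi F(0,\lambda)=V_0\,\partial_x^2$, so that $\partial_\lambda D_\Phi F(0,\lambda_*)[\phi^*\sin(k_*x)]=-k_*^2V_0\phi^*\sin(k_*x)$; since the range on mode $k_*$ is $(\phi^*)^\perp$, this element fails to lie in the range precisely when $\int_\R V_0(\phi^*)^2\,dy\neq0$, which is exactly \eqref{eq:t}. Crandall--Rabinowitz then yields the smooth curve $\{(\Phi(\e),\lambda(\e))\}$ through $(0,\lambda_*)$ with $D_\e\Phi(0)=\phi^*(y)\sin(k_*x)$.

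It remains to compute the scalar derivatives. The equation enjoys the $\Z_2$-symmetry $F(-\Phi,\lambda)=-F(\Phi,\lambda)$ coming from the odd (cubic) nonlinearity, so the bifurcation is a pitchfork: one may normalize the curve so that $\Phi(-\e)=-\Phi(\e)$ and $\lambda(-\e)=\lambda(\e)$, forcing $\dot\lambda(0)=0$ and an expansion $\Phi(\e)=\e\,\phi^*(y)\sin(k_*x)+\e^3\Phi_3+O(\e^5)$, $\lambda(\e)=\lambda_*+\tfrac{\e^2}{2}\ddot\lambda(0)+O(\e^4)$. Substituting into $F=0$, using $L_\lambda=L_{\lambda_*}+(\lambda-\lambda_*)V_0\partial_x^2$ and the identity $\sin^3\theta=\tfrac34\sin\theta-\tfrac14\sin3\theta$, the order-$\e^3$ terms read
\begin{equation*}
L_{\lambda_*}\Phi_3-\tfrac12\ddot\lambda(0)k_*^2V_0\phi^*\sin(k_*x)+\Gamma(\phi^*)^3\Bigl(-\tfrac34k_*^2\sin(k_*x)+\tfrac94k_*^2\sin(3k_*x)\Bigr)=0.
\end{equation*}
Projecting onto $\phi^*(y)\sin(k_*x)$ and using self-adjointness to kill the $L_{\lambda_*}\Phi_3$ term together with $\int_\T\sin(3k_*x)\sin(k_*x)\,dx=0$ gives
\begin{equation*}
-\tfrac12\ddot\lambda(0)\int_\R V_0(\phi^*)^2\,dy-\tfrac34\int_\R\Gamma(\phi^*)^4\,dy=0,
\end{equation*}
which rearranges to the stated value of $\ddot\lambda(0)$; the denominator is nonzero by \eqref{eq:t}.

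I would expect the Fredholm/index-zero step to be the principal obstacle: making rigorous the passage from the mode-wise invertibility in $(L2)$ and the uniform estimates in $(L3)$ to a genuine bounded inverse of the full linearization on the closed complement of the critical mode (and hence closedness of the range) is where the choice of spaces $X$, the modified operator $\tilde L_\lambda$ of Lemma~\ref{lem:1D}, and the embeddings of Lemma~\ref{lem:prop_F} all have to fit together. Once the linearized operator is known to be Fredholm of index zero with the correct kernel, the remaining verifications are the routine (though careful) symmetry and projection computations above.
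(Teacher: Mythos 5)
Your overall skeleton --- Crandall--Rabinowitz on $X=H^2(\T;L^2(\R))\cap H^1(\T;H^1(\R))$, mode-wise diagonalization of the linearization into the family $L_\lambda^k$, transversality reduced to \eqref{eq:t}, and a pitchfork expansion for the bifurcation formulas --- is the same as the paper's, and your final computation of $\ddot\lambda(0)$ gives the stated value. The genuine gap sits exactly where you predicted it, in the Fredholm step, and it is caused by the distributional part $V_1$. If you keep the ``direct'' formulation, with $F$ mapping into the dual of the test space, then the $k$-th mode of the target pairs like $H^{-1}(\R)$, and what you would need are inverses of the form realizations $L_\lambda^k:H^1(\R)\to H^{-1}(\R)$ with $k$-weighted bounds; but the hypotheses $(L1)$--$(L3)$ only provide resolvent bounds for the $L^2(\R)$-realizations (maps $L^2\to L^2$ and $L^2\to H^1$). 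Converting the one into the other is not automatic and is not addressed in your sketch, so ``summing the mode-wise inverses'' does not yet produce a bounded inverse on the complement of the critical mode in the space where your $F$ actually lives. In addition, $(L3)$ only covers $k$ sufficiently large: for the remaining finitely many $k\neq k_*$ one needs invertibility with bounds that are uniform for $\lambda$ in a (possibly shrunken) neighborhood of $\lambda_*$; the paper devotes Lemma~\ref{lem:inverse} to precisely this point, using norm-resolvent continuity of $\lambda\mapsto L_\lambda^k$ and of the projected operator $L_\lambda^{k_*}+P^{k_*}$.

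The paper avoids the dual-space problem altogether, which is why your parenthetical ``or equivalently after composing with the bounded inverse of $\tilde L_\lambda$'' is not an equivalence but the essential device: one sets $F(\Phi,\lambda)=\Phi+\tilde L_\lambda^{-1}\bigl(\Gamma(y)(\Phi^3)_{xx}-P^*\Phi\bigr)$, so that $\tilde L_\lambda^{-1}$ (built mode-wise from Lemma~\ref{lem:inverse}, cf. Lemma~\ref{lem:1D}) is only ever applied to $L^2$-type data --- the nonlinearity and $P^*\Phi$ lie in $Y$ --- while the distribution $V_1$ stays hidden inside $\tilde L_\lambda^{-1}$. Then $D_\Phi F(0,\lambda)=\Id-\tilde L_\lambda^{-1}P^*$ is a compact perturbation of the identity, so the Fredholm-of-index-zero property comes for free, and smoothness in $\lambda$ follows from $\frac{d}{d\lambda}\tilde L_\lambda^{-1}=-\tilde L_\lambda^{-1}V_0\partial_x^2\tilde L_\lambda^{-1}$, which is bounded from $Y$ to $X$ precisely because $\lambda$ multiplies only the regular part $V_0$ (Remark~\ref{rem:trouble} explains why this fails if $\lambda$ also multiplies $V_1$). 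Your derivation of the scalar formulas via $\Z_2$-equivariance of the branch and the order-$\e^3$ projection is a legitimate alternative to the paper's route, which instead quotes Kielh\"ofer's Lyapunov--Schmidt formulas (cf. Proposition~\ref{prop:bf}); both yield $\dot\lambda(0)=0$ and the same $\ddot\lambda(0)$, so that part of your proposal stands once the Fredholm step is repaired as above.
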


\begin{rem}\emph{
	The transversality condition \eqref{eq:t} is trivially satisfied if $V_0\geq 0$, $\not \equiv 0$ or $V_0\leq 0$, $\not \equiv 0$.
}
\end{rem}

%\begin{rem} Theorem \ref{main1} also holds true when replacing $M_{steady}=\mbox{id}$ by a genuinely retarded material law with smoothing properties as in (P2). In the case, the solution $\Phi$ gains regularity with respect to $x$.
%\end{rem}

In Section~\ref{S:Gamma_b} we prove Theorem~\ref{main1}.
There are two main requirements on $L_\lambda^k$ providing the existence of nontrivial solutions via bifurcation theory: The first is that there exists a value $\lambda_*\in\R$ of the bifurcation parameter such that $L_{\lambda_*}^k$ has a one-dimensional kernel if and only if $k=k_*$ for some wave number $k_*\in \N$ (see $(L2)$); this is a necessary bifurcation condition. Secondly, we demand that for any $k\neq k_*$ the self-adjoint operator $L_\lambda^k$ has a spectral gap $(-ck^2,ck^2)$ around zero, which ensures the  decay properties of $\phi_k(\cdot;\lambda)$ (see $(L3)$). Eventually, after we have established Theorem \ref{main1}, we turn to the specific case, when $\Gamma \in L^\infty(\R)$  is regular, $V$ are potentials of the form as in (P1)  and (P2) and formulate tangible assumptions on the triple $(k_*,\lambda_*,\alpha)$ (see \eqref{eq:bfp1}  and \eqref{eq:bfp2} below), which guarantee that conditions $(L0)-(L3)$ of Theorem \ref{main1} are satisfied; thereby proving the existence of nontrivial traveling wave solutions of \eqref{eq:Q}. In particular, we prove the following corollaries.

\begin{cor}[Case P1, regular $\Gamma$]
\label{cor:p1}
Let $\Gamma \in L^\infty(\R)$ and $V(\lambda, y)=\lambda + \alpha \delta_0(y)$. If $k_*\in \N$ and $\lambda_*<1$ are given and $\alpha>0$ is determined from 
\begin{equation}\label{eq:bfp1}
\alpha =\frac{2\sqrt{1-\lambda_*}}{k_*} ,
\end{equation}
then the assumptions in Theorem~\ref{main1} are satisfied with
\[
\phi^* (y)=\sqrt{k_*\sqrt{1-\lambda_*}}e^{-k_*\sqrt{1-\lambda_*}|y|}
\]
and
\begin{align*}
	\Phi(0)&=0,  && D_\e \Phi(0)(x,y)=\phi^*(y)\sin(k_*x), \\
	\lambda(0)&=\lambda_*, && \dot \lambda(0)=0, \quad \ddot \lambda(0)= -\frac{3}{2}k_*^2(1-\lambda_*)\int_{\R} \Gamma(y)e^{-4k_*\sqrt{1-\lambda_*}|y|}\,dy.
\end{align*}
\end{cor}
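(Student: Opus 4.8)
The plan is to verify that the explicit potential $V(\lambda,y)=\lambda+\alpha\delta_0(y)$, which corresponds to $V_0\equiv 1$ and $V_1=\alpha\delta_0$, satisfies hypotheses $(L0)$--$(L3)$ together with the transversality condition \eqref{eq:t} of Theorem~\ref{main1}, and then to read off the bifurcation data. Condition $(L0)$ is immediate: $V_0\equiv 1\in L^\infty(\R)$ and $V_1=\alpha\delta_0$ acts by $\langle\alpha\delta_0 f,g\rangle=\alpha f(0)g(0)$, which is bounded from $H^1(\R)$ to $H^{-1}(\R)$ via the embedding $H^1(\R)\hookrightarrow C(\R)$. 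For $(L1)$, the operator $L_\lambda^k=-\frac{d^2}{dy^2}+k^2(1-\lambda)-k^2\alpha\delta_0$ is the standard one-dimensional Schr\"odinger operator with an attractive point interaction; it is self-adjoint on the domain of functions that are $H^2$ off the origin, continuous at $0$, and satisfy the jump condition $\phi'(0^+)-\phi'(0^-)=-k^2\alpha\phi(0)$, which I would invoke from the theory of point interactions.

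The core of the argument is $(L2)$. Since $V_0,V_1$ are explicit, I would solve $L_\lambda^k\phi=0$ directly. Setting $\mu:=k\sqrt{1-\lambda}>0$ (legitimate for $\lambda<1$), the equation reads $-\phi''+\mu^2\phi=0$ away from the origin, whose only $L^2(\R)$ solution that is continuous at $0$ is, up to scaling, $\phi(y)=Ce^{-\mu|y|}$; inserting this into the jump condition gives $-2\mu=-k^2\alpha$, i.e.\ the resonance relation $\mu=\frac{k^2\alpha}{2}$, equivalently $\lambda=1-\frac{k^2\alpha^2}{4}$. For the fixed value $\alpha=\frac{2\sqrt{1-\lambda_*}}{k_*}$ this holds precisely at $(k_*,\lambda_*)$, and for every other $k$ there is exactly one value $\lambda(k)=1-\frac{k^2\alpha^2}{4}$ at which zero is an eigenvalue; as the $\lambda(k)$ are mutually distinct, I choose the open interval $I_{\lambda_*}\subset(-\infty,1)$ around $\lambda_*$ small enough that $\lambda(k)\notin I_{\lambda_*}$ for $k\neq k_*$. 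For $\lambda\in I_{\lambda_*}$ and $(k,\lambda)\neq(k_*,\lambda_*)$, zero lies strictly below the essential spectrum $[k^2(1-\lambda),\infty)$ and is not the bound-state energy, whence $0\in\rho(L_\lambda^k)$; the eigenvalue at $(k_*,\lambda_*)$ is simple because a one-dimensional attractive delta well carries a single bound state. Normalizing $Ce^{-\mu|y|}$ in $L^2(\R)$ yields $C=\sqrt{k_*\sqrt{1-\lambda_*}}$, which is the stated $\phi^*$.

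The main technical obstacle is $(L3)$, the resolvent estimates uniform in large $k$. I would exploit the explicit resolvent obtained from the rank-one (Krein) formula for the point interaction: solving $A_k\phi=f+k^2\alpha\phi(0)\delta_0$ with $A_k=-\frac{d^2}{dy^2}+\mu^2$ and Green's function $G_0(y,z)=\frac{1}{2\mu}e^{-\mu|y-z|}$ gives $\phi(0)=\frac{(A_k^{-1}f)(0)}{1-k^2\alpha/(2\mu)}$, where the denominator $1-\frac{k\alpha}{2\sqrt{1-\lambda}}$ grows like $k$ for $\lambda\in I_{\lambda_*}$ and $k$ large (its vanishing locus being exactly the resonance $\lambda=\lambda(k)$, which escapes $I_{\lambda_*}$). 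Since $\|A_k^{-1}\|_{L^2\to L^2}=\mu^{-2}\lesssim k^{-2}$ and $|(A_k^{-1}f)(0)|\lesssim\mu^{-3/2}\|f\|_{L^2}$, these bounds propagate through $\phi=A_k^{-1}f+k^2\alpha\phi(0)A_k^{-1}\delta_0$ to give $\|\phi\|_{L^2}\lesssim k^{-2}\|f\|_{L^2}$; testing $L_\lambda^k\phi=f$ against $\phi$ produces $\|\phi'\|_{L^2}^2+\mu^2\|\phi\|_{L^2}^2=\langle f,\phi\rangle+k^2\alpha|\phi(0)|^2$, and inserting $|\phi(0)|\lesssim k^{-5/2}\|f\|_{L^2}$ (so that $k^2\alpha|\phi(0)|^2\lesssim k^{-3}\|f\|_{L^2}^2$) yields $\|\phi'\|_{L^2}\lesssim k^{-1}\|f\|_{L^2}$. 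Finally, transversality holds trivially since $\langle V_0\phi^*,\phi^*\rangle_{L^2}=\|\phi^*\|_{L^2}^2=1\neq0$, so Theorem~\ref{main1} applies, and the formula for $\ddot\lambda(0)$ follows by substituting $\int_\R V_0(\phi^*)^2\,dy=1$ and $(\phi^*)^4=k_*^2(1-\lambda_*)e^{-4k_*\sqrt{1-\lambda_*}|y|}$ into the expression of Theorem~\ref{main1}.
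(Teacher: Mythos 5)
Your proposal is correct, and its overall skeleton (verify $(L0)$--$(L3)$ plus transversality, then quote Theorem~\ref{main1} and the bifurcation formulas) is the same as the paper's. Your verification of $(L2)$ — the resonance relation $2k\sqrt{1-\lambda}=k^2\alpha$ obtained by solving the ODE explicitly, the observation that the resonant values $\lambda(k)=1-k^2\alpha^2/4$ are discrete and stay away from $\lambda_*$ for $k\neq k_*$, the one-dimensionality of the kernel, and the normalization giving $\phi^*$ — coincides with Lemma~\ref{prop:A1}, except that the paper routes the computation through the general eigenvalue condition \eqref{this_makes_an_eigenvalue} of Lemma~\ref{eigenvalue_condition} (specialized to $\beta=\lambda$), since that lemma is reused later for case (P2). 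The genuine difference is in $(L3)$. The paper's Lemma~\ref{prop:A2} proves the uniform resolvent bounds by a quadratic-form argument: it expands $\|L_\lambda^k\phi\|_{L^2(\R)}^2$ by integration by parts on the two half-lines, uses the jump condition to rewrite the boundary contribution as $-\frac{2}{\alpha}(1-\lambda)\bigl(\phi'(0_+)-\phi'(0_-)\bigr)^2$, absorbs it via Young's inequality, and obtains an explicit operator inequality valid for all $\lambda<1$ and $k\geq 3k_*$, which simultaneously yields the spectral gap $(-ck^2,ck^2)\subset\rho(L_\lambda^k)$. You instead invoke the Krein rank-one resolvent formula for the point interaction: you bound $\phi(0)$ through the explicit Green's function $\frac{1}{2\mu}e^{-\mu|y-z|}$ and the denominator $1-\frac{k\alpha}{2\sqrt{1-\lambda}}$, whose modulus grows like $k$ uniformly on $I_{\lambda_*}$, and then test with $\phi$ for the derivative bound; your exponent bookkeeping ($\|A_k^{-1}\|_{L^2\to L^2}\lesssim k^{-2}$, $|(A_k^{-1}f)(0)|\lesssim\mu^{-3/2}\|f\|_{L^2}$, $|\phi(0)|\lesssim k^{-5/2}\|f\|_{L^2}$, hence $k^2\alpha|\phi(0)|^2\lesssim k^{-3}\|f\|_{L^2}^2$) is correct and delivers exactly the estimates required in $(L3)$. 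What each approach buys: yours is shorter and fully explicit, but it is tied to the exactly solvable constant-background delta potential; the paper's quadratic estimate is structural and is recycled in Lemma~\ref{prop:A2_caseP2} to handle the step background (P2) by perturbing off the (P1) result, which a Green's-function computation would not cover as directly.
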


\begin{cor}[Case P2, regular $\Gamma$]
\label{cor:p2}
Let $\Gamma \in L^\infty(\R)$ and $V(\lambda, y)=\lambda \textbf{1}_{|y|\geq b}+ \beta  \textbf{1}_{|y|<b}+ \alpha \delta_0(y)$. If $k_*\in \N$, $b>0$ and $\beta,\lambda_*<1$ are given and $\alpha>0$ is determined from 
\begin{equation}\label{eq:bfp2}
\alpha =\frac{2\sqrt{1-\beta}}{k^*}\cdot \frac{\sqrt{1-\beta}\sinh(k_*\sqrt{1-\beta}b)+\sqrt{1-\lambda_*}\cosh(k_*\sqrt{1-\beta}b)}{\sqrt{1-\beta}\cosh(k_*\sqrt{1-\beta}b)+\sqrt{1-\lambda_*}\sinh(k_*\sqrt{1-\beta}b)} ,
\end{equation}
then the assumptions in Theorem~\ref{main1} are satisfied.
\end{cor}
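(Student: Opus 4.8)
The plan is to verify hypotheses $(L0)$--$(L3)$ together with the transversality condition \eqref{eq:t} for the concrete potential $V(\lambda,y)=\lambda\mathbf{1}_{|y|\geq b}+\beta\mathbf{1}_{|y|<b}+\alpha\delta_0(y)$, and then invoke Theorem~\ref{main1}. Hypothesis $(L0)$ is immediate since $V_0=\mathbf{1}_{|y|\geq b}\in L^\infty(\R)$ and $V_1=\beta\mathbf{1}_{|y|<b}+\alpha\delta_0$ defines a bounded operator $H^1(\R)\to H^{-1}(\R)$ (the $\delta_0$ piece acts by $\phi\mapsto\alpha\phi(0)\delta_0$, bounded because $H^1(\R)\hookrightarrow C(\R)$). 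For $(L1)$, the operator $L_\lambda^k=-\tfrac{d^2}{dy^2}+k^2(1-\lambda\mathbf{1}_{|y|\geq b}-\beta\mathbf{1}_{|y|<b})-\alpha k^2\delta_0$ is a Schr\"odinger operator with an $L^\infty$ potential perturbed by a point interaction at $0$; I would identify it as the self-adjoint operator associated to the closed, bounded-below, symmetric quadratic form $\int_\R|\phi'|^2+k^2(1-\lambda\mathbf{1}_{|y|\geq b}-\beta\mathbf{1}_{|y|<b})\phi^2\,dy-\alpha k^2\phi(0)^2$ on $H^1(\R)$, whose domain consists of functions in $H^2(\R\setminus\{0\})$ satisfying the jump condition $\phi'(0^+)-\phi'(0^-)=-\alpha k^2\phi(0)$.

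The heart of the proof is the spectral condition $(L2)$: I must show that $0\in\ker L_{\lambda_*}^{k_*}$ precisely when $\alpha$ satisfies \eqref{eq:bfp2}, that this kernel is one-dimensional, and that $0$ lies in the resolvent set for all other admissible $(k,\lambda)$. The plan is to solve $L_\lambda^k\phi=0$ explicitly by the piecewise-exponential ansatz dictated by the three regions $|y|>b$, $|y|<b$, $y=0$. On $|y|>b$ the equation is $-\phi''+k^2(1-\lambda)\phi=0$, so an $L^2$-decaying solution is $\phi(y)=C e^{-k\sqrt{1-\lambda}(|y|-b)}$ (using $\lambda<1$); on $|y|<b$ it is $-\phi''+k^2(1-\beta)\phi=0$ with the even solution $\phi(y)=A\cosh(k\sqrt{1-\beta}y)$ (using $\beta<1$). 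Matching continuity at $y=\pm b$ and imposing the jump condition $\phi'(0^+)-\phi'(0^-)=-\alpha k^2\phi(0)$ at the origin yields, after elimination of the constants, exactly the secular equation \eqref{eq:bfp2} relating $\alpha$ to $(k_*,b,\beta,\lambda_*)$. The $1$-dimensionality follows because within the even-sector the solution space is one-dimensional, and the odd solution $\phi(y)=A\sinh(k\sqrt{1-\beta}y)$ does not feel the $\delta_0$ (it vanishes at $0$) so it cannot be $L^2$ and bounded below simultaneously -- hence the kernel is genuinely simple.

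To finish $(L2)$ I would argue that for fixed $b,\beta$ the left-hand side of \eqref{eq:bfp2} viewed as a function $\alpha_{\mathrm{crit}}(k,\lambda)$ is, for each $(k,\lambda)$, the \emph{unique} coupling constant at which a zero eigenvalue occurs; since the secular function depends smoothly and strictly monotonically on the relevant parameters, the matching condition fails (so $0\in\rho(L_\lambda^k)$) for every $(k,\lambda)\neq(k_*,\lambda_*)$ in a suitable neighborhood $I_{\lambda_*}$ once $\alpha$ is frozen at the value \eqref{eq:bfp2}. For $(L3)$, the decay estimate, I would exploit that for large $k$ the form is dominated by $\int|\phi'|^2+k^2(1-\max(\lambda,\beta))\phi^2$, so that $L_\lambda^k$ is coercive with gap of order $k^2$; the $\delta_0$ term is a relatively form-bounded perturbation with bound $o(k^2)$ by the interpolation $\phi(0)^2\lesssim\|\phi\|_{L^2}\|\phi'\|_{L^2}$, hence does not destroy coercivity, giving $\|\phi\|_{L^2}\lesssim k^{-2}\|f\|_{L^2}$ and $\|\phi'\|_{L^2}\lesssim k^{-1}\|f\|_{L^2}$. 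Finally the transversality condition holds because $V_0=\mathbf{1}_{|y|\geq b}\geq0$ and $\phi^*\not\equiv0$ give $\langle V_0\phi^*,\phi^*\rangle=\int_{|y|\geq b}(\phi^*)^2\,dy>0$, so \eqref{eq:t} is satisfied.

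The main obstacle I anticipate is the clean derivation and sign-analysis of the secular equation \eqref{eq:bfp2}: I must keep careful track of the hyperbolic matching at $y=\pm b$ and the jump at $0$, confirm that the resulting $\alpha$ is positive under the stated hypotheses $\beta,\lambda_*<1$, and -- crucially for the ``only if'' part of $(L2)$ -- establish strict monotonicity of the secular function so that the zero eigenvalue is isolated and does not recur for nearby $(k,\lambda)$. The estimate $(L3)$ should be routine once the $\delta_0$ perturbation is shown to be form-subordinate, but the uniformity in $\lambda\in I_{\lambda_*}$ requires the gap to be bounded below uniformly, which follows from $\lambda_*<1$ and shrinking $I_{\lambda_*}$ if necessary.
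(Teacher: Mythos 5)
Your overall route is the same as the paper's (verify $(L0)$--$(L3)$ and the transversality condition \eqref{eq:t}, obtain the secular equation \eqref{eq:bfp2} by piecewise solving of $L_\lambda^k\phi=0$, then invoke Theorem~\ref{main1}), and your treatment of $(L0)$, $(L1)$ and \eqref{eq:t} is fine. However, your argument for $(L3)$ rests on a false premise. You claim $L_\lambda^k$ is coercive with gap of order $k^2$ for large $k$ because the $\delta_0$ term is form-bounded with bound $o(k^2)$ via $\phi(0)^2\lesssim\|\phi\|_{L^2}\|\phi'\|_{L^2}$. But the delta coupling is $\alpha k^2$ with $\alpha>0$ \emph{fixed} by \eqref{eq:bfp2}; the interpolation inequality only yields $\alpha k^2\phi(0)^2 \lesssim \alpha k\left(\|\phi'\|_{L^2}^2 + k^2\|\phi\|_{L^2}^2\right)$, a relative form bound that grows like $k$. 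Indeed $L_\lambda^k$ is not even bounded below on the scale $k^2$: testing the quadratic form with $\phi(y)=e^{-\alpha k^2|y|/2}$ gives a Rayleigh quotient $\approx -\tfrac{\alpha^2k^4}{4}+O(k^2)$, i.e.\ the delta well produces an eigenvalue of order $-k^4$. The estimate in $(L3)$ is true not because $0$ lies below the spectrum, but because $0$ lies in a spectral \emph{gap} between this deep bound state and $\sigma_{ess}(L_\lambda^k)=[k^2(1-\lambda),\infty)$. Accordingly the paper proves $(L3)$ without coercivity: for (P1), Lemma~\ref{prop:A2} bounds $\|L_\lambda^k\phi\|_{L^2}^2$ from below directly (integration by parts, the jump condition, Young's inequality), a squared-operator estimate that tolerates spectrum on both sides of $0$; for (P2), Part 1 of Lemma~\ref{prop:A2_caseP2} shows via the eigenvalue condition \eqref{this_makes_an_eigenvalue} that no eigenvalues lie in $[-\mu_0k^2,\mu_0k^2]$ (its left-hand side grows like $k$ while its right-hand side stays bounded), so $\dist(0,\sigma(L_\lambda^k))\gtrsim k^2$ and self-adjointness gives the $L^2$ bound, and Part 2 obtains the $\|\phi'\|_{L^2}$ bound by comparison with the (P1) operator. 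Your coercivity argument must be replaced by one of these mechanisms.

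There is also a slip in the central computation for $(L2)$: on $|y|<b$ you posit the even solution $A\cosh(k\sqrt{1-\beta}\,y)$, which has $\phi'(0_+)=\phi'(0_-)=0$, so the jump condition $\phi'(0_+)-\phi'(0_-)=-\alpha k^2\phi(0)$ forces $A=0$, and your elimination cannot produce \eqref{eq:bfp2}. The eigenfunction of a delta well has a kink at the origin; the correct even ansatz is $A\cosh(k\sqrt{1-\beta}\,y)+B\sinh(k\sqrt{1-\beta}\,|y|)$, as in the four-constant ansatz of Lemma~\ref{eigenvalue_condition}, and it is precisely the elimination of $B$ through the jump condition that yields \eqref{eq:bfp2}. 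Finally, the ``strict monotonicity'' you defer for the only-if part of $(L2)$ is where the actual work lies: the paper argues that the left-hand side of \eqref{this_makes_an_eigenvalue} grows linearly in $k$ while the right-hand side is bounded (so only finitely many $k$ are candidates), that the right-hand side divided by $k$ is decreasing in $k$ at $\lambda=\lambda_*$ (so $k_*$ is the unique candidate), and then uses integrality of $k$ to shrink $I_{\lambda_*}$; a vague appeal to monotonicity ``in the relevant parameters'' does not by itself exclude a second pair $(k,\lambda)$ with both $k\neq k_*$ and $\lambda\neq\lambda_*$ varying simultaneously.
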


\begin{rem} \emph{
		Details on the construction of $\phi^*$ in Corollary \ref{cor:p2} can be taken from Section~\ref{Ss:P2}.
	}
\end{rem}

\medskip

\subsection{Main result for distributional $\Gamma$} \label{subsec:distributional}
Again, we start with the definition of a weak solution of \eqref{eq:Q_trav}, but now in the case when $\Gamma$ is given by a $\delta$-potential. We assume that the function $V_1=W+\alpha\delta_0$ splits into a regular part $W$ and the distributional part $\alpha\delta_0$ so that $V=\lambda V_0 +W+\alpha \delta_0$, where $V_0,W\in L^\infty(\R)$ and $\Gamma = \gamma \delta_0$.

\begin{definition}[Weak solution in the case of distributional $\Gamma$] \label{solution_distributional} 
		\emph{
We say that $\Phi\in H^2(\T;L^2(\R))\cap H^1(\T;H^1(\R))\cap C(\R;H^2(\T))$ is a \emph{weak solution} of \eqref{eq:Q_trav} if and only if
	\[
	\int_{\T}\int_{\R} \Phi_y\Psi_y - \left(1-\lambda V_0(y)-W(y)\right)\Phi_{xx}\Psi\,dy\,dx + \int_\T \bigl(\alpha\Phi_{xx}(x,0)+\gamma (\Phi^3)_{xx}(x,0)\bigr)\Psi(x,0) \,dx=0
	\]
	for any $\Psi \in H^1(\T;H^1(\R))$. 
}
\end{definition}

\begin{rem}
	\emph{
Clearly $\Phi_{xx}(\cdot,0)\in L^2(\T)$ and $\Phi(\cdot,0), \Phi_x(\cdot,0)\in L^\infty(\T)$ so that $(\Phi^3(\cdot,0))_{xx}=3\Phi(\cdot,0)^2\Phi(\cdot,0)_{xx}+6\Phi(\cdot,0)\Phi(\cdot,0)_x^2\in L^2(\T)$. Moreover, $\Psi\in H^1(\T;H^1(\R))\subset H^1(\T\times\R)$ has an $L^2$-trace at $y=0$. 
}
\end{rem}

Note that \eqref{eq:Q_trav}  can be written as a linear partial differential equation on $\T\times \R\setminus\{0\}$ equipped with a nonlinear boundary condition on $\T$:
  \begin{equation}\label{eq:gleichung1}
  	\begin{cases}
  -\Phi_{yy}-(1-\lambda V_0-W)\Phi_{xx}=0, \quad &(x,y)\in\T \times \R\setminus\{0\}, \\
  \Phi_y(x,0_+)-\Phi_y(x,0 _-)=\partial_x^2\left(\alpha \Phi+ \gamma \Phi^3\right)(x,0), \quad &x\in\T. 
  \end{cases}
  \end{equation}
  
As before let 
\[
L_{k}^\lambda:= -\frac{d^2}{dy^2}+k^2(1-\lambda V_0(y)-W(y)-\alpha \delta_0(y))
\]
be a family of linear wave operators and set
 \[
   L_{0,k}^\lambda := -\frac{d^2}{dy^2}+k^2(1-\lambda V_0(y)-W(y))
 \]
 to be the regular part of $L_k^\lambda$.
  We prove the following theorem:
  
  \begin{thm}[Existence of traveling wave for distributional $\Gamma$]
  \label{main2}
   Assume that $\Gamma=\gamma \delta_0$, the potential $V$ is given by
   \[
   V(\lambda,y)=\lambda V_0(y)+W(y)+\alpha \delta_0(y),
   \]
    and
\begin{itemize}
	\item[$(\tilde L0)$] $V_0,W\in L^\infty(\R)$ are even;
	\item[$(\tilde L1)$] there exists an interval $I\subset \R$ such that for every fixed $k\in \N$ and $\lambda\in I$ the operator $L_{0,\lambda}^k: H^2(\R)\subset L^2(\R)\to L^2(\R)$ satisfies $0\in \rho(L_{0,\lambda}^k)$;
	\item[$(\tilde L2)$] there exists a wavenumber $k_*\in \N$, a value $\lambda_*\in\R$, and an open interval $I_{\lambda_*}\subset I\subset \R$ containing $\lambda_*$ such that zero is an isolated simple eigenvalue of $ L_{\lambda_*}^{k_*}$ 
	and $0\in \rho(L_{\lambda}^k)$ for any $(k,\lambda)\in \N\times I_{\lambda_*}$ with $(k,\lambda)\neq (k_*,\lambda_*)$;
	\item[$(\tilde L3)$] there exist $C>0$ such that $\|\phi_k(\cdot;\lambda)\|_{L^2(0,\infty)} \leq C$  uniformly for $\lambda\in I_{\lambda_*}$, $k\in \N$, and where $\phi_k \in H^2(0,\infty)$ satisfies\footnote{The existence and the properties of the functions $\phi_k$ are detailed in Remark~\ref{rem:existenz_decaying_mode}.}
	\[
	L_{0,k}^\lambda \phi_k(y;\lambda)=0\quad \mbox{on}\quad (0,\infty)\quad \mbox{with}\quad \phi_k(0;\lambda)=1.
	\]
\end{itemize}
  If in addition $V_0$ satisfies the transversality condition
  \begin{equation}\label{eq:t2}
  \langle V_0 \phi^*,\phi^* \rangle_{L^2(\R)} \neq 0,
  \end{equation}
  	where $\phi^*$ spans the one-dimensional kernel of $L_{\lambda_*}^{k_*}$,
  then there exists $\e_0>0$ and a smooth curve through $(0,\lambda_*)$ 
  \[
  	\{(\Phi(\e),\lambda(\e))\mid |\e|<\e_0\}\subset (H^2(\T;L^2(\R))\cap H^1(\T;H^1(\R))\cap \{\Phi(\cdot,0)\in H^2(\T)\})\times I_{\lambda_*}
  	\]
  	of nontrivial solutions of \eqref{eq:Q_trav} with
  	\begin{align*}
	\Phi(0)&=0,  && D_\e \Phi(0)(x,y)=\phi^*(y)\sin(k_*x), \\
	\lambda(0)&=\lambda_*, && \dot \lambda(0)=0, \quad \ddot \lambda(0)= -\frac{3\gamma}{4\int_\R V_0(y)(\phi^*)^2(y)\,dy}.
	\end{align*} 
  \end{thm}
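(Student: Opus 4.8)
The plan is to recast the reformulation \eqref{eq:gleichung1} as a single abstract equation $G(\Phi,\lambda)=0$ on $X:=H^2(\T;L^2(\R))\cap H^1(\T;H^1(\R))\cap\{\Phi(\cdot,0)\in H^2(\T)\}$ and to apply the Crandall--Rabinowitz bifurcation theorem at the point $(0,\lambda_*)$. Inserting the ansatz $\Phi=\sum_{k\in\N}\phi_k(y)\sin(kx)$ from \eqref{eq:form}, the bulk equation in \eqref{eq:gleichung1} decouples into the mode-wise operators $L_{0,k}^\lambda$, and the coupling between modes enters only through the nonlinear jump term $\gamma\,\partial_x^2(\Phi^3)(\cdot,0)$. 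Using $(\tilde L0)$--$(\tilde L2)$ together with the decaying solutions $\phi_k(\cdot;\lambda)$ of Remark~\ref{rem:existenz_decaying_mode} and the uniform bound $(\tilde L3)$, I would first construct a bounded linear solution operator for the regular problem on $\T\times(\R\setminus\{0\})$ (in the spirit of Lemma~\ref{lem:1D}), so that any solution is fully determined by its trace $g:=\Phi(\cdot,0)\in H^2(\T)$ via $\Phi=\sum_k a_k\phi_k(|y|)\sin(kx)$ with $g=\sum_k a_k\sin(kx)$. The equation then collapses to the scalar jump equation $A(\lambda)g=\gamma\,\partial_x^2(g^3)$ on $\T$, where $A(\lambda)$ is the Fourier multiplier collecting the Dirichlet--Neumann jump $\Phi_y(\cdot,0^+)-\Phi_y(\cdot,0^-)$ and the linear $\alpha$-term, whose symbol $m_k(\lambda)$ vanishes exactly when $0\in\sigma(L_k^\lambda)$.

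In the trace picture the linearization $D_\Phi G(0,\lambda_*)$ is the multiplier $A(\lambda_*)$, which by $(\tilde L2)$ annihilates the single mode $k_*$ and is boundedly invertible on all others; it is therefore Fredholm of index $0$ with one-dimensional kernel spanned by $\Phi_0:=\phi^*(y)\sin(k_* x)$, and its cokernel is represented by $\Phi_0$ as well. For the transversality condition of Crandall--Rabinowitz I would check that $\langle D_\lambda D_\Phi G(0,\lambda_*)\Phi_0,\Phi_0\rangle\neq0$. Since $D_\lambda D_\Phi G(0,\lambda)=V_0\partial_x^2$, this pairing equals $-k_*^2\pi\int_\R V_0(\phi^*)^2\,dy$, which is nonzero precisely by the transversality hypothesis \eqref{eq:t2}. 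The same fact can be read off in the mode picture from the Green's/Feynman--Hellmann identity $\partial_\lambda\phi_k'(0^+;\lambda_*)=k^2\int_0^\infty V_0\,\phi_k^2\,dy$, obtained by differentiating $L_{0,k}^\lambda\phi_k=0$ in $\lambda$ and integrating the result against $\phi_k$ on $(0,\infty)$.

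With kernel, cokernel and transversality established, Crandall--Rabinowitz produces the asserted smooth curve $\{(\Phi(\e),\lambda(\e))\mid|\e|<\e_0\}$ with $\Phi(0)=0$, $\lambda(0)=\lambda_*$ and $D_\e\Phi(0)=\Phi_0$. Because the only nonlinearity is the cubic trace term, the second Fr\'echet derivative $D_{\Phi\Phi}G(0,\lambda_*)$ vanishes, so the standard bifurcation expansion gives $\dot\lambda(0)=0$ and expresses $\ddot\lambda(0)$ through the projection of the cubic term onto the kernel. Evaluating that projection with the identity $\sin^3(k_* x)=\tfrac34\sin(k_* x)-\tfrac14\sin(3k_* x)$, so that only the first harmonic survives, together with the value of the transversality pairing above, yields $\ddot\lambda(0)$ in the form recorded in the statement.

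The analytic heart of the argument -- and the step I expect to be the main obstacle -- is the construction and regularity of the linear solution operator and the proof that $G$ is a well-defined, smooth map on $X$. One must sum the mode-wise decaying profiles $a_k\phi_k(|y|)$ and show the result lies in $H^2(\T;L^2(\R))\cap H^1(\T;H^1(\R))$ with trace in $H^2(\T)$; here the uniform bound $(\tilde L3)$ on $\|\phi_k(\cdot;\lambda)\|_{L^2(0,\infty)}$, supplemented by the elementary ODE estimate $\|\phi_k'\|_{L^2}\lesssim k\|\phi_k\|_{L^2}$, is exactly what forces convergence of the series in the correct norms and its smooth dependence on $\lambda$. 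Since both the $\delta$-potential in $V$ and the nonlinearity $\gamma\delta_0$ concentrate all nonlinear interaction in the single trace $\Phi(\cdot,0)$, the delicate point is to control this trace in $H^2(\T)$; once the algebra property of $H^2(\T)$ handles $g\mapsto g^3$ and the trace theorem is in place, the remaining bifurcation-theoretic step is routine.
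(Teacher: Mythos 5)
Your proposal follows essentially the same route as the paper's proof: reduce the problem via the even decaying modes $\phi_k(\cdot;\lambda)$ to the jump condition \eqref{eq:R} at $y=0$, regard its linear part as a multiplier with symbol $2\phi_k'(0_+;\lambda)+\alpha k^2$, and apply Crandall--Rabinowitz, with transversality supplied by the Feynman--Hellmann identity $\partial_\lambda\phi_k'(0_+;\lambda)=k^2\int_0^\infty V_0\,\phi_k^2\,dy$ and hypothesis \eqref{eq:t2}; this is exactly the content of Lemma~\ref{lem:prop_G} and the paper's proof of Theorem~\ref{main2}, and your $\sin^3$ identity is the paper's convolution computation (Lemma~\ref{A1}, Lemma~\ref{A2}).

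Two differences are worth recording. First, presentation: you phrase the reduced problem for the trace $g=\Phi(\cdot,0)$ on $\T$ and would apply Crandall--Rabinowitz in the two-space setting $A(\lambda_*):H^2(\T)\to L^2(\T)$, while the paper works with the coefficient sequence $a\in h^s_\sharp(\R)$ and uses the projection trick $\tilde A_\lambda=A_\lambda+P^*$ to obtain an identity-plus-compact formulation on a single space; these are equivalent under the Fourier isomorphism, and in either formulation the indispensable analytic input is the symbol asymptotics $A_\lambda^k=\alpha k^2+O(k^{3/2})$ (Lemma~\ref{asymptotik}, derived from $(\tilde L3)$ together with precisely the ODE estimates you invoke), without which the multiplier is not Fredholm with the two-derivative gain that absorbs $\partial_x^2(g^3)$. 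Also note that in the trace picture $D^2_{g\lambda}G(0,\lambda)$ is the multiplier $2\partial_\lambda\phi_k'(0_+;\lambda)$, not literally $V_0\partial_x^2$; your Feynman--Hellmann derivation is the correct justification and gives the same pairing. Second, a quantitative caveat: your regularity index is half a derivative too low. With $g\in H^2(\T)$, i.e.\ $a\in h^2_\sharp(\R)$, the bifurcation argument for the reduced equation does go through, but the reconstructed solution $\Phi=\sum_k a_k\phi_k(|y|;\lambda)\sin(kx)$ then lies only in $C(\R;H^{3/2}(\T))\cap C^1(\R;H^{1/2}(\T))$, since $\|\phi_k\|_{L^\infty}\lesssim k^{1/2}$ and $\|\phi_k'\|_{L^\infty}\lesssim k^{3/2}$ (Lemma~\ref{lem:regularity}); this falls short of the class $C(\R;H^2(\T))$ demanded by Definition~\ref{solution_distributional}, where $\Phi_{xx}(\cdot,y)$ and the jump of $\Phi_y$ must be controlled at every $y$, not only at $y=0$ where $\phi_k(0;\lambda)=1$ removes the loss. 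No bootstrap repairs this, because the cubic term loses exactly the two derivatives the inverse multiplier regains; the paper therefore runs the whole argument in $h^s_\sharp(\R)$ with $s\geq\tfrac52$ (Lemma~\ref{lem:FA}), which costs nothing since every estimate, including the Banach algebra property, is available for all $s\geq 1$. With that adjustment your plan coincides with the paper's proof.
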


\begin{rem}
	\emph{
	The transversality condition \eqref{eq:t2} is trivially satisfied if $V_0\geq 0$, $\not \equiv 0$ or $V_0\leq 0$, $\not \equiv 0$.
}
\end{rem}

\begin{rem}\emph{
		 We can formulate $(\tilde L2)$ entirely in terms of the ansatz-functions $\phi_k(\cdot;\lambda)$ defined in $(\tilde L3)$. To this end notice that $\sigma_{ess}(L_{0,\lambda}^k) = \sigma_{ess}(L_\lambda^k)$, cf. Lemma \ref{lem:spectrum}. Since $0\not\in \sigma(L_{0,\lambda}^k)$ by assumption $(\tilde L1)$ it is clear that $0\in \sigma(L_\lambda^k)$ is characterized by zero being an eigenvalue of $L_\lambda^k$. This, however, in combination with the evenness of $V_0, W$, means that $\phi_k(|y|;\lambda)$ is up to scalar multiples the unique candidate for the eigenfunction and has to satisfy 
$$
	2\phi_k^\prime(0_+;\lambda)+k^2\alpha=0.
$$
Here $2\phi_k^\prime(0_+;\lambda)$ is the jump of the first derivative of the even function $\phi_k(|y|;\lambda)$ at $y=0$.  
Based on this characterization of zero belonging to the spectrum of $L_k^\lambda$ we can replace $(\tilde L2)$ by
    \begin{itemize}
			\item[$(\bar L2)$]  there exists a wavenumber $k_*\in \N$, $\lambda_*>0$, and an open interval $I_{\lambda_*}\subset \R_+$ containing $\lambda_*$ such that
			\[
			2\phi_k^\prime(0_+;\lambda)+k^2\alpha=0
			\]
			if and only if $(k,\lambda)=(k_*,\lambda_*)$ for any $k\in \N$ and $\lambda \in I_{\lambda_*}$.
		\end{itemize}
	}
\end{rem}

In Section  \ref{S:Gamma_d} we prove Theorem \ref{main2}. 
%{\color{orange} Claiming periodicity in one spatial direction and evenness in the transverse direction, a Fourier ansatz of the form
%\begin{equation}\label{eq:form_a}
%	\Phi(x,y;\lambda)=\sum_{k\in \N}a_k(\lambda)\phi_k(y;\lambda)\sin(kx),
%\end{equation}
%where $(\phi_k)_k\in H^2(\R\setminus\{0\})\cap  H^1(\R)$  even, with $\phi_k(0;\lambda)=1$ are solutions of the  Fourier decomposition of the linear equation \eqref{eq:gleichung1}, reduces the problem of finding nontrivial solutions of \eqref{eq:Q_trav} to the following family of \emph{algebraic} equations}
Claiming periodicity in one spatial direction and evenness in the transverse direction we make a Fourier ansatz of the form
\begin{equation}\label{eq:form_a}
	\Phi(x,y;\lambda)=\sum_{k\in \N}a_k(\lambda)\phi_k({|y|};\lambda)\sin(kx),
\end{equation}
where $\phi_k$ are the decaying functions from assumption $(\tilde L3)$.   In particular, the ansatz \eqref{eq:form_a} solves the linear differential equation in \eqref{eq:gleichung1} and thus reduces the problem of finding nontrivial solutions of \eqref{eq:Q_trav} to the following family of \emph{algebraic} equations
\begin{equation}\label{eq:R_a}
	2a_k(\lambda)\phi_k^\prime(0_+;\lambda)=-k^2\left( \alpha a_k(\lambda)-\frac{1}{4}\gamma \bigl(a(\lambda)*a(\lambda)*a(\lambda)\bigr)_k\right)\qquad\mbox{for all}\quad k\in \N.
\end{equation}

In the spirit of Section \ref{S:Gamma_b}, we show that if conditions $(\tilde L0)-(\tilde L3)$ are satisfied, then the nonlinear equation \eqref{eq:R_a} has a non-trivial solution $(a_k(\lambda))_{k\in \Z}$ with $a_k(\lambda)=-a_{-k}(\lambda)$ and the decay property
\[
(k^2a_k(\lambda))_k\in l^2(\R);
\]
thereby providing a solution of \eqref{eq:Q_trav} in the form of \eqref{eq:form_a}. 
Here, $l^2(\R)$ denotes the space of $l^2$--summable sequences in $\R^{\Z}$. 
Condition $(\tilde L1)$ guarantees the existence of the family of even ansatz-functions $(\phi)_{k\in \N}$ with $\phi_k(0;\lambda)=1$. Condition $(\tilde L2)$ assures that there exists $k_*\in \N$, $\lambda_*\in\R$ and an interval $I_{\lambda_*}\subset \R$ including $\lambda_*$ such that the linearization of \eqref{eq:R_a}, given by the multiplication operator 
\[
A_\lambda^k:= 2\phi_k^\prime(0_+;\lambda)+k^2\alpha,
\]
has a one-dimensional kernel if $(k,\lambda)=(k_*,\lambda_*)$, that is $A_{\lambda_*}^{k*}=0$; and $A_\lambda^k\neq 0$ for all $(k,\lambda)\in \N\times I_{\lambda_*}$ with $(k,\lambda)\neq(k_*,\lambda_*)$. This is a necessary bifurcation condition. After we have proved Theorem \ref{main2}, we investigate the specific cases, when $V$ is a potential of the form as in (P1) and (P2). The former being a $\delta$-potential on a constant background, while the latter is a $\delta$-potential on the background of a step function. In both cases the $\delta$-potential part in $V$ is essential, guaranteeing sufficient decay properties of the sequence $(a_k(\lambda))_{k\in \N}$.
%In the appendix some auxiliary results are collected.
 In particular, we prove the following corollaries:

\begin{cor}[Case P1, distributional $\Gamma$]  
\label{cor:p1_distrib}
Assume that $\Gamma=\gamma \delta_0$ and $V(\lambda,y)=\lambda +\alpha \delta_0(y)$. If $k_*\in \N$ and $\lambda_*<1$ are given and $\alpha>0$ is determined from 
\begin{equation*}\label{eq:bfp1_distrib}
\alpha =\frac{2\sqrt{1-\lambda_*}}{k_*} 
\end{equation*}
then the assumptions in Theorem~\ref{main2} are satisfied with
\[
\phi^* (y)=e^{-k_*\sqrt{1-\lambda_*}|y|}
\]
and
\begin{align*}
	\Phi(0)&=0,  && D_\e \Phi(0)(x,y)=e^{-k_*\sqrt{1-\lambda_*}|y|}\sin(k_*x), \\
	\lambda(0)&=\lambda_*, && \dot \lambda(0)=0, \quad \ddot \lambda(0)= -\gamma k_* \sqrt{1-\lambda_*}.
\end{align*}
Moreover, the solutions $\Phi(\e)$ take the form
  \[
  \Phi(\e)(x,y)=\sum_{k\in \N}a_k(\e)e^{-k\sqrt{1-\lambda}|y|}\sin(kx).
  \]
\end{cor}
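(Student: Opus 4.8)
The plan is to verify the hypotheses $(\tilde L0)$–$(\tilde L3)$ and the transversality condition \eqref{eq:t2} of Theorem~\ref{main2} for the special data $V_0\equiv1$, $W\equiv0$, $\Gamma=\gamma\delta_0$, and then to extract the bifurcation curve together with its explicit constants. With these data the regular part of the wave operator is the constant-coefficient operator $L_{0,\lambda}^k=-\frac{d^2}{dy^2}+k^2(1-\lambda)$, whose $L^2(\R)$-spectrum is $[k^2(1-\lambda),\infty)$. Condition $(\tilde L0)$ is trivial, and choosing $I\subset(-\infty,1)$ gives $k^2(1-\lambda)>0$ for all $k\geq1$, hence $0\in\rho(L_{0,\lambda}^k)$, which is $(\tilde L1)$. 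The decaying solution of $L_{0,k}^\lambda\phi_k=0$ on $(0,\infty)$ normalized by $\phi_k(0;\lambda)=1$ is the explicit exponential $\phi_k(y;\lambda)=e^{-k\sqrt{1-\lambda}\,y}$, and a one-line integration gives $\|\phi_k(\cdot;\lambda)\|_{L^2(0,\infty)}^2=(2k\sqrt{1-\lambda})^{-1}$, which is bounded (in fact decaying in $k$) uniformly over any interval $I_{\lambda_*}$ separated from $1$; this is $(\tilde L3)$.

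To verify the crossing condition $(\tilde L2)$ I would use the eigenvalue characterization from the remark following Theorem~\ref{main2}, valid here because $0\in\rho(L_{0,\lambda}^k)$ for $\lambda<1$: zero lies in $\sigma(L_\lambda^k)$ exactly when $A_\lambda^k:=2\phi_k'(0_+;\lambda)+k^2\alpha=0$. From $\phi_k'(0_+;\lambda)=-k\sqrt{1-\lambda}$ one gets $A_\lambda^k=k\bigl(k\alpha-2\sqrt{1-\lambda}\bigr)$, which for $k\geq1$ vanishes precisely along the dispersion curve $\lambda=\lambda_k:=1-\tfrac14k^2\alpha^2$. The prescribed value $\alpha=2\sqrt{1-\lambda_*}/k_*$ is equivalent to $\lambda_{k_*}=\lambda_*$, so $A_{\lambda_*}^{k_*}=0$, and since $k\mapsto\lambda_k$ is strictly decreasing with $\lambda_k\to-\infty$, the remaining values $\{\lambda_k\}_{k\neq k_*}$ are discrete and bounded away from $\lambda_*$; taking $I_{\lambda_*}$ inside $(\lambda_{k_*+1},\lambda_{k_*-1})\cap(-\infty,1)$ (with $\lambda_{k_*-1}$ read as $1$ when $k_*=1$) forces $A_\lambda^k\neq0$ for all other $(k,\lambda)$. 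By Lemma~\ref{lem:spectrum} the essential spectrum of $L_{\lambda_*}^{k_*}$ is $[k_*^2(1-\lambda_*),\infty)$, which lies strictly to the right of $0$, so the zero eigenvalue is isolated; it is simple because any odd zero-energy solution must vanish at the origin and is therefore trivial, leaving $\phi^*(y)=e^{-k_*\sqrt{1-\lambda_*}|y|}$ as the unique (up to scale) kernel element. Finally \eqref{eq:t2} holds since $V_0\equiv1$ gives $\langle V_0\phi^*,\phi^*\rangle=\|\phi^*\|_{L^2(\R)}^2>0$.

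With all hypotheses checked, Theorem~\ref{main2} yields the smooth curve $\{(\Phi(\e),\lambda(\e))\}$ through $(0,\lambda_*)$, and the stated data follow by substitution: the tangent is $\phi^*(y)\sin(k_*x)$ with $\phi^*$ as above, $\dot\lambda(0)=0$, and $\ddot\lambda(0)$ is obtained from the formula of Theorem~\ref{main2} after inserting $\int_\R V_0(\phi^*)^2\,dy=\int_\R e^{-2k_*\sqrt{1-\lambda_*}|y|}\,dy=(k_*\sqrt{1-\lambda_*})^{-1}$. The explicit shape of the solutions comes from the ansatz \eqref{eq:form_a}: since every $\phi_k(|y|;\lambda)=e^{-k\sqrt{1-\lambda}|y|}$, the bifurcating solutions are $\Phi(\e)(x,y)=\sum_{k\in\N}a_k(\e)e^{-k\sqrt{1-\lambda(\e)}|y|}\sin(kx)$, where the coefficients solve the reduced algebraic system \eqref{eq:R_a}. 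The only genuinely delicate point is the global single-crossing requirement inside $(\tilde L2)$—one must exclude a second wavenumber producing a kernel for $\lambda$ near $\lambda_*$—but here this is painless because the dispersion curves $\lambda=\lambda_k$ are explicit, strictly monotone, and escape to $-\infty$; the substantive content of the corollary is exactly that the prescribed $\alpha$ tunes this unique crossing to the chosen pair $(k_*,\lambda_*)$.
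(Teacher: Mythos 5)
Your proposal is correct and follows essentially the same route as the paper: verify $(\tilde L0)$--$(\tilde L3)$ and the transversality condition \eqref{eq:t2}, then invoke Theorem~\ref{main2}, with the same dispersion relation $k\alpha=2\sqrt{1-\lambda}$ at its core; the paper merely outsources $(\tilde L1)$, $(\tilde L3)$ to the general positivity argument of Remark~\ref{rem:pos} and $(\tilde L2)$ to Lemma~\ref{prop:A1} of the regular case (P1), whereas you carry out the same verifications by explicit computation with $\phi_k(y;\lambda)=e^{-k\sqrt{1-\lambda}y}$. One caveat: inserting $\int_\R V_0(\phi^*)^2\,dy=(k_*\sqrt{1-\lambda_*})^{-1}$ into the formula of Theorem~\ref{main2} actually yields $\ddot\lambda(0)=-\tfrac{3}{4}\gamma k_*\sqrt{1-\lambda_*}$, not the value $-\gamma k_*\sqrt{1-\lambda_*}$ asserted in the corollary; this mismatch is inherited from the paper itself (the formulas in Theorem~\ref{main2}, Proposition~\ref{prop:Abf} and Corollary~\ref{cor:p1_distrib} give mutually inconsistent constants), so it is not a flaw in your approach, but you should not claim the stated value ``follows by substitution'' without flagging the discrepancy.
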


\begin{cor}[Case P2, distributional $\Gamma$]  
	\label{cor:p2_distrib}
	Let $\Gamma \in L^\infty(\R)$ and $V(\lambda, y)=\lambda \textbf{1}_{|y|\geq b}+ \beta  \textbf{1}_{|y|<b}+ \alpha \delta_0(y)$. Suppose furthermore that $k_*\in \N$ and $\lambda_*<1$ are given. If
\begin{itemize}
\item (Case $\beta>1$) $b, \alpha>0$ are determined from
    \[
	b=\frac{\pi}{\sqrt{\beta-1}}, \quad \alpha=\frac{2\sqrt{1-\lambda_*}}{k_*}
	\]
\item (Case $\beta<1$) $b>0$ is given and $\alpha>0$ is determined from
    \[
    \alpha =\frac{2\sqrt{1-\beta}}{k^*}\cdot \frac{\sqrt{1-\beta}\sinh(k_*\sqrt{1-\beta}b)+\sqrt{1-\lambda_*}\cosh(k_*\sqrt{1-\beta}b)}{\sqrt{1-\beta}\cosh(k_*\sqrt{1-\beta}b)+\sqrt{1-\lambda_*}\sinh(k_*\sqrt{1-\beta}b)} 
    \]
\item (Case $\beta=1$) $b>0$ is given and $\alpha>0$ is determined from
    \[
    \alpha = \frac{2\sqrt{1-\lambda_*}}{k_*(1+\sqrt{1-\lambda_*}k_*b)}
    \]
\end{itemize}
then in all three cases the assumptions in Theorem~\ref{main2} are satisfied. 
\end{cor}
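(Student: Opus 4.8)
The plan is to verify, case by case, the hypotheses $(\tilde L0)$--$(\tilde L3)$ together with the transversality condition \eqref{eq:t2} of Theorem~\ref{main2}, using the reformulation $(\bar L2)$ of $(\tilde L2)$. Here $V_0=\mathbf{1}_{|y|\ge b}$, $W=\beta\mathbf{1}_{|y|<b}$ and the delta strength is $\alpha$. Conditions $(\tilde L0)$ (evenness of $V_0,W$) and the transversality \eqref{eq:t2} are immediate, since $V_0\ge 0$, $V_0\not\equiv 0$, whence $\langle V_0\phi^*,\phi^*\rangle=2\int_b^\infty(\phi^*)^2\,dy>0$ for any nontrivial even decaying $\phi^*$.

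First I would construct the ansatz functions $\phi_k(\cdot;\lambda)$ of $(\tilde L3)$ by solving the piecewise constant-coefficient equation $-\phi_k''+k^2(1-\lambda V_0-W)\phi_k=0$ on $(0,\infty)$ with $\phi_k(0)=1$. Writing $\nu=\sqrt{1-\lambda}$ and distinguishing the sign of $1-\beta$, the interior solution on $(0,b)$ is hyperbolic ($\beta<1$, $\mu=\sqrt{1-\beta}$), affine ($\beta=1$), or trigonometric ($\beta>1$, $\omega=\sqrt{\beta-1}$), while on $(b,\infty)$ it is the decaying exponential $Ce^{-k\nu(y-b)}$; decay forces $\lambda<1$, which holds on a small interval $I_{\lambda_*}$ around $\lambda_*<1$. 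Matching $\phi_k$ and $\phi_k'$ at $y=b$ fixes the interior coefficients and produces a closed expression for $\phi_k'(0_+;\lambda)$. In the trigonometric case the choice $b=\pi/\omega$ gives $k\omega b=k\pi$, so $\sin(k\omega b)=0$, $\cos(k\omega b)=(-1)^k$, and the matching collapses to $\phi_k'(0_+;\lambda)=-k\nu$; in the affine case $\phi_k'(0_+;\lambda)=-k\nu/(1+k\nu b)$; in the hyperbolic case $\phi_k'(0_+;\lambda)=-k\mu\,\frac{\mu\sinh(k\mu b)+\nu\cosh(k\mu b)}{\nu\sinh(k\mu b)+\mu\cosh(k\mu b)}$. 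Substituting these into the bifurcation relation $2\phi_{k_*}'(0_+;\lambda_*)+k_*^2\alpha=0$ of $(\bar L2)$ and solving for $\alpha$ reproduces exactly the three formulas in the statement.

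Next I would dispatch the easier conditions. For $(\tilde L1)$ one notes $\sigma_{ess}(L_{0,\lambda}^k)=[k^2(1-\lambda),\infty)$, which lies strictly to the right of $0$ because $\lambda<1$; a zero eigenvalue of the regular operator would, by the evenness of $V_0,W$ and simplicity in one dimension, be an even or odd $L^2$ solution, and matching the interior solution (with $\phi'(0)=0$, resp. $\phi(0)=0$) to a decaying exponential at $y=b$ forces a sign contradiction in each of the three cases, so $0\in\rho(L_{0,\lambda}^k)$. For $(\tilde L3)$ the explicit forms show $\phi_k(y)\approx e^{-k\mu y}$ (hyperbolic), a bounded oscillation (trigonometric), or a uniformly bounded affine profile (affine) on $(0,b)$, together with the tail $\|\phi_k\|_{L^2(b,\infty)}^2=C^2/(2k\nu)$; in every case $\|\phi_k\|_{L^2(0,\infty)}$ is bounded uniformly in $k\in\N$ and $\lambda$ in the compact closure of $I_{\lambda_*}$. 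Isolation and simplicity of the eigenvalue $0$ at $(k_*,\lambda_*)$ are automatic, since eigenvalues of a one-dimensional Schrödinger operator below the essential spectrum are simple and $0<\sigma_{ess}$ makes $0$ isolated.

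The main obstacle is the uniqueness clause of $(\bar L2)$: that, with $\alpha$ fixed by $(k_*,\lambda_*)$, the relation $2\phi_k'(0_+;\lambda)+k^2\alpha=0$ fails for every $(k,\lambda)\in\N\times I_{\lambda_*}$ with $(k,\lambda)\ne(k_*,\lambda_*)$. The strategy is to view $A_\lambda^k:=2\phi_k'(0_+;\lambda)+k^2\alpha$ as a function of the continuous variables and establish strict monotonicity. Monotonicity in $\lambda$ is transparent: differentiating the explicit expressions (e.g.\ in the hyperbolic case $\partial_\nu\frac{\mu s+\nu c}{\nu s+\mu c}=\mu/(\nu s+\mu c)^2>0$, with $s=\sinh(k\mu b)$, $c=\cosh(k\mu b)$) shows $A_\lambda^k$ is strictly monotone in $\lambda$, so there is at most one root $\lambda(k)$ per $k$. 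In the trigonometric and affine cases the roots are explicit ($\lambda(k)=1-k^2\alpha^2/4$, resp.\ $\nu(k)=k\alpha/(2-k^2\alpha b)$ for the finitely many $k$ with $k^2<2/(\alpha b)$), and in both cases the admissible roots are strictly monotone in $k$, hence distinct and bounded away from $\lambda_*$; choosing $I_{\lambda_*}$ between the two neighboring roots finishes these cases. In the hyperbolic case the roots are not explicit, so I would instead show the zero set of $A_\lambda^k$ is a monotone curve $\lambda=\Lambda(k)$ via the implicit function theorem, controlling the sign of $\partial_k A_\lambda^k$ along the zero set and using that $\frac{\mu s+\nu c}{\nu s+\mu c}\to 1$ as $k\to\infty$ while $k\alpha/(2\mu)\to\infty$, which simultaneously rules out accumulation of roots at $\lambda_*$. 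Once distinctness and the absence of accumulation are secured, $I_{\lambda_*}$ can be shrunk to isolate $(k_*,\lambda_*)$; then $(\tilde L0)$--$(\tilde L3)$ and \eqref{eq:t2} all hold, and Theorem~\ref{main2} yields the asserted bifurcating branch.
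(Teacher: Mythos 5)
Your proposal is correct, and its overall strategy coincides with the paper's: verify $(\tilde L0)$--$(\tilde L3)$ together with \eqref{eq:t2} case by case, using explicit piecewise solutions of the constant-coefficient equation, the jump characterization $2\phi_k'(0_+;\lambda)+k^2\alpha=0$ of zero eigenvalues (the paper's $(\bar L2)$), and monotonicity in $\lambda$ and in $k$ to isolate $(k_*,\lambda_*)$. The differences are in execution, and they are worth recording. First, for $\beta<1$ the paper computes nothing new: $(\tilde L1)$ and $(\tilde L3)$ are quoted from Remark~\ref{rem:pos} (positivity of $1-\lambda V_0-W$ plus an integration-by-parts bound), and $(\tilde L2)$ from Lemma~\ref{prop:A1_caseP2} of the regular-$\Gamma$ section; you instead rederive the hyperbolic matching formula, and your expression $\phi_k'(0_+;\lambda)=-k\mu\bigl(\mu\sinh(k\mu b)+\nu\cosh(k\mu b)\bigr)/\bigl(\mu\cosh(k\mu b)+\nu\sinh(k\mu b)\bigr)$ is consistent with the paper's eigenvalue condition \eqref{this_makes_an_eigenvalue}, so both routes produce the stated $\alpha$. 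Second, for $(\tilde L1)$ in the case $\beta>1$ the paper rules out zero eigenvalues of $L_{0,\lambda}^k$ through the invertibility of the matching matrices $M_\pm$ of Lemma~\ref{eigenvalue_condition} with $\alpha=0$, while you use the parity splitting into even/odd eigenfunctions together with $\sin(k\pi)=0$, $\cos(k\pi)=(-1)^k$; the two arguments are equivalent, yours being slightly more elementary and uniform across the three cases. Third, your implicit-function-theorem treatment of uniqueness in the hyperbolic case is the same computation as the paper's remark that the right-hand side of \eqref{eq:bifurcation_condition_caseP2} divided by $k$ is strictly decreasing in $k$: with $R=(\mu s+\nu c)/(\mu c+\nu s)$, $s=\sinh(k\mu b)$, $c=\cosh(k\mu b)$, one has $\partial_k R=\mu b(1-R^2)$, and the sign condition your sketch needs, namely $R-k\mu b(1-R^2)>0$ along the zero set, does hold because $(\mu s+\nu c)(\mu c+\nu s)\geq(\mu^2+\nu^2)sc\geq(\mu^2+\nu^2)k\mu b>k\mu b(\mu^2-\nu^2)$. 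A small bonus of your write-up is that it makes explicit the strict monotonicity of $A_\lambda^k$ in $\lambda$ (at most one root per $k$, via $\partial_\nu R=\mu/(\mu c+\nu s)^2>0$), which the paper leaves implicit when shrinking $I_{\lambda_*}$.
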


\begin{rem} 
	\emph{Details on the construction of $\phi^*$ in Corollary \ref{cor:p2_distrib}, the functions $\phi_k$  and the form of the solutions $\Phi(\e)(x,y)=\sum_{k\in \N}a_k(\e)\phi_k(y;\lambda)\sin(kx)$ can be taken from Section~\ref{Ss:P2_distrib}.
	}
\end{rem}

   \bigskip
   
   \section{Notation}\label{S:notation}
  % Let us close the introduction with some comments on notation and conventions:
    If $f$ and $g$ are elements in an ordered Banach space, we write $f\lesssim g$ ($f\gtrsim g$) if there exists a constant $c>0$ such that $f\leq c g$ ($f\geq cg$). Moreover, the notation $f\eqsim g$ is used whenever $f\lesssim g$ and $f\gtrsim g$. We write $c=c(p_1,p_2,\ldots)>0$  if we want to emphasize  that the constant $c>0$ depends on the parameters $p_1,p_2,\ldots$.  In Section~\ref{S:Gamma_d} we are looking for solutions of an infinite dimensional system of nonlinear algebraic equation. We consider solutions in the sequence spaces related to 
   \[
   l^2(\R):=\left\{a=(a_k)_{k\in \Z}\mid a_k\in\R  \mbox{ for all } k\in \Z \mbox{ and } \|a\|_{l^2(\R)}^2:=\sum_{k\in\Z}|a_k|^2<\infty\right\}.
   \]
   Eventually, for any $r\in \R$ we set
   \[
   h^r(\R):=\left\{a\in l^2(\R)\mid  \bigl((1+|k|)^r a_k\bigr)_{k\in\Z} \in l^2(\R)\right\}.
   \]
    and equip the space $h^r(\R)$ with the norm
    \[
    \|a\|_{h^r(\R)}^2:=\sum_{k\in \Z}(1+|k|)^{2r}|a_k|^2.
    \]
   We also consider the subspaces
   \begin{align*}
   l^2_\sharp(\R) &:=\{a \in l^2(\R): a_{-k} = -a_k \mbox{ for } k \in \Z\}, \\
   h^r_\sharp (\R) &:= h^r(\R)\cap l^2_\sharp(\R).
   \end{align*}
   Throughout the paper we use the notation $\langle \cdot,\cdot \rangle_H$ to denote the dual pairing in the Hilbert space $H$. If $f,g\in L^2(U)$  are real-valued functions, where $U\subset \R^n$ is a domain in $\R^n$, $n\in \N$, then
   \[
   \langle f,g\rangle_{L^2(U)}:=\int_{U}f(z)g(z)\,dz
   \]
   and if $a,b\in l^2(\R)$ then
   \[
	   \langle a,b \rangle_{l^2(\R)}:=\sum_{k\in \Z}a_kb_k.
   \]
   
If $L:D(L)\subset H \to H$ is a linear operator with domain $D(L)$,  we denote by
 \[
 \rho(L):= \{\lambda \in \C \mid \lambda-L:D(L)\to H\; \mbox{has a bounded inverse}\}
 \]
 the \emph{resolvent set} of $L$. The \emph{spectrum} of $L$ is given by $\C \setminus \rho(L)$. If $L$ is self-adjoint, then $\sigma(L)\subset \R$ and the spectrum of $L$ can be decomposed as a disjoint union
 \[
 \sigma(L)=\sigma_{ess}(L)\cup \sigma_d(L),
 \]
 where $\sigma_{d}$ is the \emph{discrete spectrum} of $L$ consisting  of isolated eigenvalues of $\sigma(L)$ of finite multiplicity and $\sigma_{ess}(L)=\sigma(L)\setminus \sigma_{d}(L)$ is the \emph{essential spectrum}.

\bigskip

	\section{Existence of  traveling waves for bounded potentials $\Gamma$}
\label{S:Gamma_b}

This section is devoted to the proof of Theorem~\ref{main1}.
Subsequently, we affirm in Section~\ref{S:regular} that the conditions are fulfilled for special cases where $V$ takes the form in (P1) or (P2), therey proving Corollary~\ref{cor:p1} and Corollary~\ref{cor:p2}.
In the following we restrict ourself to solutions $\Phi$ of \eqref{eq:Q_trav} having the form
\begin{equation*}
	\Phi(x,y;\lambda)=\sum_{k\in \N}\phi_k(y;\lambda)\sin(kx),
\end{equation*}
where the $y$-dependent Fourier coefficients $(\phi_k)_{k\in \N}$ are decaying at infinity (suitable function spaces are formulated later). Then, $\Phi$ is a solution of
\begin{equation}\label{eq:Eq}
	-\Phi_{yy}-(1-\lambda V_0(y)-V_1(y))\Phi_{xx}+\Gamma(y)(\Phi^3)_{xx}=0
\end{equation} 
if and only if
\[
-\phi_k^{\prime\prime}+k^2(1-\lambda V_0(y)-V_1(y))\phi_k+\frac{1}{4}k^2\Gamma(y)\left(\Phi*\Phi*\Phi\right)_k=0\qquad\mbox{for all}\quad k\in \N.
\]
Note that $-\frac{1}{4}\left(\Phi*\Phi*\Phi\right)$ is the $k$-th Fourier coefficient of $\Phi^3$, cf. Lemma~\ref{A1}. The Fourier ansatz with respect to $x$ decomposes the operator 
\begin{equation}\label{eq:L}
	L_\lambda:=-\partial_y^2 -(1-\lambda V_0(y)-V_1(y))\partial_x^2
\end{equation}
into the sequence of Schr\"odinger operators $L_\lambda^k$
%, which depend explicitly on the distribution $V_1$. Using the wave speed as a bifurcation parameterby
\[
L_\lambda^k:=-\frac{d^2}{dy^2}+k^2(1-\lambda V_0(y)-V_1(y)).
\]
Recall that we are working under the assumptions $(L0)$--$(L3)$ from Theorem~\ref{main1}.
\begin{rem} \label{gap_and_embedding}
	\emph{
		\begin{itemize}
			\item[(i)] Notice that a necessary condition for  $(L3)$ to hold is that the operator $L^k_\lambda$ satisfies the spectral gap property
			\[
			(-ck^2,ck^2)\subset \rho(L^k_\lambda)\qquad\mbox{for some constant}\quad c>0
			\]
			uniformly in $\lambda\in I_{\lambda_*}$ and $k\in \N$ sufficiently large.\\
			\item[(ii)] The domain of $L_\lambda^k$ is a subset of $H^1(\R)$, which is the domain of the quadratic form of $L_\lambda^k$. As a vector space, it does not depend on $\lambda$. However, the graph norm on $D(L_\lambda^k)$ is $\lambda$-dependent and the embedding $D(L_\lambda^k)\subset H^1(\R)$ is locally uniformly bounded with respect to $\lambda$.
		\end{itemize}
	}
\end{rem}

The next lemma extends property $(L3)$ to all values of $k\in \N$ by adding a projection to $L_\lambda^k$ for $k=k_\ast$. For this purpose let $\ker L_{\lambda_*}^{k_*}=\spa\{\phi^*\}$ with $\|\phi^*\|_{L^2(\R)}=1$. Denote by $P^{k_*}$ the projection mapping
\[
P^{k_*}\phi:=\langle \phi, \phi^*\rangle_{L^2(\R)} \phi^*\qquad \mbox{for any}\quad \phi \in L^2(\R)
\]  
and define by $\tilde L_\lambda^k:D(L_\lambda^k)\subset L^2(\R)\to L^2(\R)$ for $k\in \N$ and $\lambda\in I_{\lambda_*}$ the family of operators
\begin{equation} \label{def_projectedop}
	\tilde L^{k}_{\lambda}=\left\{ \begin{array}{lcl}L^{k_*}_\lambda +P^{k_*}\quad&\mbox{if}&\quad k=k_*, \vspace{\jot} \\
		L^k_\lambda\quad&\mbox{if}&\quad k\neq k_*. \end{array}\right.
\end{equation}

\begin{lem} \label{lem:inverse} Let $\tilde L_\lambda^k \phi =g$ for some $g\in L^2(\R)$. Then, by possibly shrinking the interval $I_{\lambda_*}$, we have that 
	\begin{equation} \label{eq:est_k}
		\|\phi\|_{L^2(\R)}\lesssim \frac{1}{k^2}\|g\|_{L^2(\R)}\qquad\mbox{and}\qquad	\|\phi^\prime \|_{L^2(\R)}\lesssim \frac{1}{k}\|g\|_{L^2(\R)}
	\end{equation}
	for all $k\in \N$ uniformly in $\lambda\in I_{\lambda_*}$.
\end{lem}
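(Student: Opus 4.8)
The goal of Lemma~\ref{lem:inverse} is to extend the resolvent estimates from assumption $(L3)$, which hold only for large $k$ and require $0\in\rho(L_\lambda^k)$, to \emph{all} $k\in\N$ uniformly in $\lambda\in I_{\lambda_*}$. The difficulty is concentrated at the single critical pair $(k_*,\lambda_*)$, where $0$ is a genuine eigenvalue of $L_{\lambda_*}^{k_*}$ and the unmodified operator is not invertible; the projection $P^{k_*}$ is added precisely to restore invertibility there.

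\emph{The plan is to} split $\N$ into three regimes and treat each separately. First, for $k$ large (say $k\ge N$ for a fixed threshold $N$), assumption $(L3)$ applies directly to $L_\lambda^k=\tilde L_\lambda^k$ and yields exactly the claimed bounds, so there is nothing to prove there. Second, I would treat the finitely many small $k\ne k_*$ with $k<N$. For each such $k$ and for $\lambda=\lambda_*$, assumption $(L2)$ gives $0\in\rho(L_{\lambda_*}^k)$, so $L_{\lambda_*}^k$ has a bounded inverse on $L^2(\R)$; since the resolvent set is open and the family depends continuously on $\lambda$ in an appropriate sense (the operators differ by the multiplication $k^2\lambda V_0$ with $V_0\in L^\infty$, cf. Remark~\ref{gap_and_embedding}(ii)), after possibly shrinking $I_{\lambda_*}$ we retain $0\in\rho(L_\lambda^k)$ with a bound on $\|(L_\lambda^k)^{-1}\|$ that is uniform for $\lambda\in I_{\lambda_*}$. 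Because only finitely many values of $k$ are involved, I can absorb the $k$-dependent constants into the implicit constants of $\lesssim$, and the $L^2$-estimate for $\phi$ follows. The derivative estimate $\|\phi'\|_{L^2}\lesssim \frac1k\|g\|_{L^2}$ then comes from testing the equation $L_\lambda^k\phi=g$ against $\phi$: integrating by parts gives $\|\phi'\|_{L^2}^2 + k^2\langle(1-\lambda V_0-V_1)\phi,\phi\rangle = \langle g,\phi\rangle$, and combining the quadratic-form bound (using $(L0)$ to control $V_0,V_1$ on $H^1$) with the already-established $\|\phi\|_{L^2}\lesssim k^{-2}\|g\|_{L^2}$ yields the $k^{-1}$ rate.

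Third, I would handle the critical wavenumber $k=k_*$, where $\tilde L_{\lambda}^{k_*}=L_\lambda^{k_*}+P^{k_*}$. The key point is that at $\lambda=\lambda_*$ the rank-one perturbation $P^{k_*}$ removes the zero eigenvalue: on $\ker L_{\lambda_*}^{k_*}=\spa\{\phi^*\}$ the operator acts as the identity, while on the orthogonal complement (which is invariant by self-adjointness, $(L1)$) $L_{\lambda_*}^{k_*}$ is boundedly invertible because $0$ is \emph{isolated} in the spectrum by $(L2)$. Hence $0\in\rho(\tilde L_{\lambda_*}^{k_*})$, and as in the second regime a continuity/openness argument in $\lambda$ lets me shrink $I_{\lambda_*}$ so that $\tilde L_\lambda^{k_*}$ stays boundedly invertible uniformly in $\lambda\in I_{\lambda_*}$. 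Since $k_*$ is a single fixed wavenumber, the resulting $L^2$-bound trivially has the form $\|\phi\|_{L^2}\lesssim\|g\|_{L^2}\eqsim k_*^{-2}\|g\|_{L^2}$, and the derivative bound follows again by testing against $\phi$ and using that $P^{k_*}$ is bounded.

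\emph{The hardest part} is the uniformity in $\lambda$ across the critical regime: I must ensure that shrinking $I_{\lambda_*}$ (done finitely often, once per small $k$ and once for $k_*$) keeps the inverses bounded by a constant independent of $\lambda$, which rests on continuity of $\lambda\mapsto\tilde L_\lambda^k$ in the norm-resolvent sense. This is where Remark~\ref{gap_and_embedding}(ii) is essential, since the domain is $\lambda$-independent as a vector space and the perturbation $k^2\lambda V_0$ is a bounded, $\lambda$-analytic multiplication operator, so standard analytic perturbation theory delivers locally uniform resolvent bounds. Combining the three regimes and taking the final $I_{\lambda_*}$ to be the intersection of the finitely many shrunken intervals completes the proof.
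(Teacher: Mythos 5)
Your overall architecture matches the paper's proof: large $k$ is dispatched by $(L3)$, invertibility of the projected operator at the critical pair is established (your block-diagonal argument on $\spa\{\phi^*\}\oplus\{\phi^*\}^{\perp}$ is a clean, arguably nicer, alternative to the paper's contradiction argument that testing $L_{\lambda_*}^{k_*}\phi+P^{k_*}\phi=0$ with $\phi^*$ forces $\phi\in\ker L_{\lambda_*}^{k_*}$), and uniformity in $\lambda$ for the finitely many remaining wavenumbers is obtained from norm-resolvent continuity of $\lambda\mapsto\tilde L_\lambda^k$; your appeal to analytic perturbation theory is the same substance as the paper's citation of Theorem VIII.25 in \cite{reed_simon}, since $L_\lambda^k-L_{\lambda'}^k=k^2(\lambda-\lambda')V_0$ is a bounded multiplication operator.

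The genuine gap is in your derivation of the derivative estimate for the finitely many small $k$ and for $k=k_*$. You test $L_\lambda^k\phi=g$ against $\phi$, obtaining
\begin{equation*}
\|\phi'\|_{L^2(\R)}^2+k^2\langle(1-\lambda V_0)\phi,\phi\rangle_{L^2(\R)}-k^2\langle V_1\phi,\phi\rangle=\langle g,\phi\rangle_{L^2(\R)},
\end{equation*}
and claim that $(L0)$ plus the bound $\|\phi\|_{L^2(\R)}\lesssim k^{-2}\|g\|_{L^2(\R)}$ closes the argument. But $(L0)$ only gives $|\langle V_1\phi,\phi\rangle|\leq\|V_1\|_{H^1\to H^{-1}}\,\|\phi\|_{H^1(\R)}^2$, so the term you must control is $k^2\|V_1\|_{H^1\to H^{-1}}\bigl(\|\phi\|_{L^2(\R)}^2+\|\phi'\|_{L^2(\R)}^2\bigr)$: the part proportional to $\|\phi'\|_{L^2(\R)}^2$ carries the coefficient $k^2\|V_1\|_{H^1\to H^{-1}}$, which is in no way small, so it cannot be absorbed into the left-hand side, and the $L^2$-bound on $\phi$ is of no help against it. For the concrete case $V_1=\alpha\delta_0$ the step can be rescued, because $\langle V_1\phi,\phi\rangle=\alpha\phi(0)^2\leq 2\alpha\|\phi\|_{L^2(\R)}\|\phi'\|_{L^2(\R)}$ allows absorption after Young's inequality; but the lemma is stated under the abstract hypothesis $(L0)$, where no form bound with small relative constant is available. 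The paper avoids this entirely: from $\inf_{\lambda,k}\dist(0,\sigma(\tilde L_\lambda^k))>0$ it deduces a uniform bound on $\|(\tilde L_\lambda^k)^{-1}\|_{L^2(\R)\to D(L_\lambda^k)}$ (graph norm), and then invokes Remark~\ref{gap_and_embedding}(ii) --- the embedding $D(L_\lambda^k)\subset H^1(\R)$ is locally uniformly bounded in $\lambda$ --- to convert this into the $H^1$-bound. Replacing your testing argument by this graph-norm-plus-embedding step closes the gap; note that the crude bound $\|\phi'\|_{L^2(\R)}\lesssim\|g\|_{L^2(\R)}$ it yields is sufficient, since only finitely many $k$ remain and the factors $k^{-1}$, $k^{-2}$ can be absorbed into the implicit constants there, a point your proposal correctly exploits elsewhere.
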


\begin{proof} By Theorem~VIII.25 in \cite{reed_simon} it follows that the map $\lambda \mapsto L_\lambda^k$ is norm-resolvent continuous, that is $\lambda \mapsto (L_\lambda^k-\mathrm i)^{-1}\in {\mathcal{L}}(L^2(\R))$ is continuous with respect to the operator norm. Let us verify that also $\tilde L_\lambda^{k*}$ is norm-resolvent convergent to $\tilde L_{\lambda_*}^{k*}$ as $\lambda\to \lambda_*$. Note that $\Id+P^{k_*}(L_{\lambda_*}^{k_*}-\mathrm i)^{-1}: L^2(\R) \to L^2(\R)$ is a compact perturbation of the identity, injective and hence bijective. Then, for $\lambda$ close to $\lambda_*$, also $\Id+P^{k_*}(L_\lambda^{k_*}-\mathrm i)^{-1}: L^2(\R) \to L^2(\R)$ is bijective. Note that we have the identity
	\[
	\left(L_\lambda^{k_*}-\mathrm i+ P^{k_*}\right)^{-1}=\left( \left( \Id + P^{k_*}(L_\lambda^{k_*}-\mathrm i)^{-1} \right)(L_\lambda^{k_*}-\mathrm i)\right)^{-1}.
	\]
	From this we see that
	\[
	(\tilde L_\lambda^{k_*}-\mathrm i)^{-1}=(L_\lambda^{k_*}-\mathrm i)^{-1}\left( \Id + P^{k_*}(L_\lambda^{k_*}-\mathrm i)^{-1} \right)^{-1}.
	\]
	Using the assumption that $L^{k_*}_\lambda$ converges to $L^{k_*}_{\lambda_*}$ in the norm resolvent sense, this implies the claim.

	Next we show that $0\in \rho(\tilde L^{k_*}_{\lambda_*})$. Since adding a (compact) projection operator only changes the discrete spectrum, we may assume by (L2) for contradiction that $0$ is an eigenvalue of $\tilde L^{k_*}_{\lambda_*}$, that is $L_{\lambda_*}^{k_*}\phi + P^{k_*}\phi=0$. Testing with $\phi^*$, which spans the kernel of $L_{\lambda_*}^{k_*}$, we get $\langle \phi, \phi^*\rangle_{L^2(\R)}=0$ and hence $P^{k_*}\phi=0$. Thus, $\phi$ also belongs to the kernel of $L_{\lambda_*}^{k_*}$, which contradicts $\langle \phi, \phi^*\rangle_{L^2(\R)}=0$ and the simplicity of the $0$-eigenvalue of $L_{\lambda_*}^{k_*}$.

	Finally, by  $(L3)$ weknow that there exists $k_0\in \N$ (we assume w.l.o.g. $k_0\geq k_*$) such that \eqref{eq:est_k} holds for $k> k_0\geq k_*$. This implies that 
	\begin{equation} \label{inf_dist}
		\inf_{k>k_0}\inf_{|\lambda-\lambda_*|<\delta} \dist(0, \sigma(\tilde L_\lambda^k))>0.
	\end{equation}
	Now we want to extend this inequality to the remaining  values of $k\in\{1,\ldots,k_0\}$ by possibly diminishing $\delta$. Thus, let $k\in \N$ with $1\leq k \leq k_0$ and assume for contradiction the existence of a sequence $\lambda_n\to \lambda_*$ as $n\to\infty$ such that there exists $\mu_n\in \sigma(\tilde L_{\lambda_n}^k)$ with $\lim_{n\to\infty} \mu_n=0$. By norm-resolvent convergence this implies $0\in \sigma(\tilde L_{\lambda_*}^k)$, which is impossible for $k\neq k_*$ by $(L2)$ and also impossible for $k=k_*$ as stated above. This contradiction establishes \eqref{inf_dist} for all $k\in \N$. Finally, \eqref{inf_dist} shows that the map $\lambda \mapsto \|(\tilde L_\lambda^k)^{-1}\|_{L^2(\R)\to L^2(\R)}$ is bounded for $\lambda \in (\lambda_*-\delta,\lambda_*+\delta)$ uniformly for $k\in \N$. The same holds for $\lambda \mapsto \|(\tilde L_\lambda^k)^{-1}\|_{L^2(\R)\to D(L_\lambda^k)}$, and due to (ii) in Remark~\ref{gap_and_embedding}, also for $\lambda \mapsto \|(\tilde L_\lambda^k)^{-1}\|_{L^2(\R)\to H^1(\R)}$. This establishes \eqref{eq:est_k} for all $k\in \N$.
\end{proof}

Now, we introduce suitable function spaces and use Lemma \ref{lem:inverse} to reformulate the nonlinear problem \eqref{eq:Eq} in a setting, which makes the local bifurcation theorem due to Crandall--Rabinowitz \cite{CrRab_bifurcation} applicable. Set
\[
X:=\left\{\Phi\in H^2(\T;L^2(\R))\cap H^1(\T;H^1(\R))\mid \Phi(x,y)=\sum_{k\in \N}\phi_k(y)\sin(kx) \right\}
\] 
and
\[
Y:=\left\{\Phi \in L^2(\T; L^2(\R)) \mid \Phi(x,y)=\sum_{k\in \N}\phi_k(y)\sin(kx)\right\}.
\]
Moreover,
% we define the operators
%\[
% L_\lambda:=-\partial_y^2 -(1-\lambda V_0(y)-V_1(y))\partial_x^2,
%\]
%and
we set
\[
\tilde L_\lambda:= L_\lambda +P^*,
\]
where $P^*$ denotes the  $L^2$-orthogonal projection onto $\ker L_{\lambda_*}=\spa\{\Phi^*\}$ with $\Phi^*(x,y)=\frac{1}{\sqrt{\pi}}\phi^*(y)\sin(k_* x)$. Recall, that the operator $L_\lambda$ is defined in \eqref{eq:L}. As an immediate consequence of $(L0)$--$(L3)$, we obtain the following lemma.

\begin{lem}\label{lem:1D}
	Assume that $(L0)$--$(L3)$ holds true and let $\lambda\in I_{\lambda_*}$. There exists a bounded linear map $\tilde L_\lambda^{-1}: Y \to X$ with the following property: if $f\in Y$ is given and the function $\Phi\in X$ solves
	\begin{equation} \label{eq:inv}
		\Phi = \tilde L_\lambda^{-1}(f + P^*\Phi)
	\end{equation}
	then $\Phi$ solves
	\begin{equation*} \label{eq:direkt}
		L_\lambda \Phi = f
	\end{equation*}
	in the weak sense, that is
	\[
	\int_{\T}\int_{\R} \Phi_y\Psi_y - \left(1-\lambda V_0(y)\right)\Phi_{xx}\Psi \,dy\,dx - \int_\T \langle V_1(\cdot)\Phi_x(x,\cdot), \Psi_x(x,\cdot) \rangle\,dx=\int_{\T}\int_{\R} f\Psi \,dy\,dx
	\]
	for any $\Psi \in X$.
\end{lem}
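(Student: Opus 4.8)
The plan is to construct $\tilde L_\lambda^{-1}$ one Fourier mode at a time and to convert the uniform-in-$k$ resolvent estimates of Lemma~\ref{lem:inverse} into boundedness between the scales $Y$ and $X$. Concretely, for $f\in Y$ with $f(x,y)=\sum_{k\in\N}f_k(y)\sin(kx)$ I would set
\[
\tilde L_\lambda^{-1}f:=\sum_{k\in\N}(\tilde L_\lambda^k)^{-1}f_k(y)\,\sin(kx),
\]
each inverse $(\tilde L_\lambda^k)^{-1}$ existing as a bounded operator on $L^2(\R)$ by Lemma~\ref{lem:inverse}, which in particular established $0\in\rho(\tilde L_\lambda^k)$ uniformly in $\lambda\in I_{\lambda_*}$. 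Orthogonality of the systems $\{\sin(kx)\}$ and $\{\cos(kx)\}$ yields, for $\Phi=\sum_k\phi_k\sin(kx)$, the norm equivalences $\|\Phi\|_X^2\eqsim\sum_{k}\bigl[(1+k^2)^2\|\phi_k\|_{L^2(\R)}^2+(1+k^2)\|\phi_k'\|_{L^2(\R)}^2\bigr]$ and $\|f\|_Y^2\eqsim\sum_{k}\|f_k\|_{L^2(\R)}^2$.

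Boundedness $Y\to X$ then follows by inserting the estimates $\|\phi_k\|_{L^2(\R)}\lesssim k^{-2}\|f_k\|_{L^2(\R)}$ and $\|\phi_k'\|_{L^2(\R)}\lesssim k^{-1}\|f_k\|_{L^2(\R)}$ from \eqref{eq:est_k}: the worst-case terms $k^4\|\phi_k\|_{L^2(\R)}^2$ and $k^2\|\phi_k'\|_{L^2(\R)}^2$ are each controlled by $\|f_k\|_{L^2(\R)}^2$, so that $\|\tilde L_\lambda^{-1}f\|_X^2\lesssim\sum_k\|f_k\|_{L^2(\R)}^2\eqsim\|f\|_Y^2$ with a constant uniform in $\lambda\in I_{\lambda_*}$. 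This is exactly the step where the spectral-gap estimate $(L3)$ is indispensable, since it supplies precisely the two powers of $k$ needed to absorb the second $x$-derivative encoded in the $X$-norm; I expect the careful bookkeeping of powers of $k$ here to be the main point to get right.

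For the stated implication I would first determine the Fourier coefficients of $P^*\Phi$. As $P^*$ is the $L^2(\T\times\R)$-projection onto $\Phi^*=\tfrac1{\sqrt\pi}\phi^*(y)\sin(k_*x)$, the computation $\langle\Phi,\Phi^*\rangle_{L^2}=\sqrt\pi\,\langle\phi_{k_*},\phi^*\rangle_{L^2(\R)}$ shows that $P^*\Phi$ carries a single nonzero mode, namely $P^{k_*}\phi_{k_*}=\langle\phi_{k_*},\phi^*\rangle_{L^2(\R)}\phi^*$ at $k=k_*$. Hence the identity $\Phi=\tilde L_\lambda^{-1}(f+P^*\Phi)$, i.e.\ $\tilde L_\lambda\Phi=f+P^*\Phi$, decouples into $\tilde L_\lambda^k\phi_k=f_k$ for $k\neq k_*$ and $L_{\lambda}^{k_*}\phi_{k_*}+P^{k_*}\phi_{k_*}=f_{k_*}+P^{k_*}\phi_{k_*}$ at $k=k_*$; the added projections cancel, leaving $L_\lambda^k\phi_k=f_k$ weakly for every $k\in\N$. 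Testing each of these one-dimensional identities against $\psi_k\in H^1(\R)$, multiplying by $\sin(kx)$ respectively $\cos(kx)$ and summing over $k$, the orthogonality relations reassemble the mode-wise equations into the full weak formulation of $L_\lambda\Phi=f$ stated in the lemma. The only care needed is to justify the interchange of summation with the dual pairing in the $V_1$-term; this follows from $\|\phi_k\|_{H^1(\R)}\lesssim k^{-1}\|f_k\|_{L^2(\R)}$, the membership $\Psi\in X$ (so that $(k\|\psi_k\|_{H^1(\R)})_k\in\ell^2$), the boundedness of $V_1:H^1(\R)\to H^{-1}(\R)$ from $(L0)$, and the Cauchy--Schwarz inequality.
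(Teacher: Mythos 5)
Your proposal is correct and follows essentially the same route as the paper's proof: both define $\tilde L_\lambda^{-1}$ mode-wise via $(\tilde L_\lambda^k)^{-1}$, deduce boundedness $Y\to X$ from the uniform-in-$k$ estimates \eqref{eq:est_k} of Lemma~\ref{lem:inverse}, observe that $P^*\Phi$ lives only in the $k_*$ mode so the projections cancel and $L_\lambda^k\phi_k=f_k$ for every $k$, and then reassemble the mode-wise weak formulations by summation justified through Cauchy--Schwarz estimates (with the $V_1$-term controlled via its $H^1(\R)\to H^{-1}(\R)$ boundedness from $(L0)$). Your write-up is in fact somewhat more explicit than the paper's in the bookkeeping of powers of $k$ and in the computation of the Fourier coefficients of $P^*\Phi$, but there is no substantive difference in method.
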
	

\begin{proof}
	Let $\lambda \in I_{\lambda_*}$. For $g\in Y$ the definition of $\Phi := \tilde L_\lambda^{-1}g$ is given by 
	$$
	\Phi(x,y) = \sum_{k\in \N} \phi_k(y)\sin(kx) \quad \mbox{ with } \quad \phi_k = (\tilde L_\lambda^k)^{-1}g_k.
	$$
	Then Lemma~\ref{lem:inverse} implies that $\Phi\in X$ and that $\tilde L_\lambda^{-1}:Y\to X$ is bounded. Now suppose that $\Phi\in X$ solves \eqref{eq:inv}. Then $\phi_k= (\tilde L_\lambda^k)^{-1}(f_k + P^{k_*}\phi_k)$ so that $\phi_k\in D(\tilde L_\lambda^k)=D(L_\lambda^k)$ for all $k\in \N$. In particular, we know that 
	$$
	L_\lambda^k \phi_k = f_k \qquad \mbox{ for all } \qquad k\in \N
	$$
	and thus for $K_0\in \N$ and $\psi_1,\ldots, \psi_{K_0}\in H^1(\R)$ we have  
	$$
	\sum_{k=1}^{K_0}\left(\int_\R \phi_k' \psi_k' + k^2(1-\lambda V_0(y)\,dy- \langle V_1\phi_k,\psi_k\rangle\right)= \sum_{k=1}^{K_0} \int_\R f_k\psi_k\,dy.  
	$$
	Taking the limit $K_0\to\infty$ in the previous equation will lead to 
	$$
	\int_{\T}\int_{\R} \Phi_y\Psi_y - \left(1-\lambda V_0(y)\right)\Phi_{xx}\Psi\,dy\,dx - \int_\T \langle V_1(\cdot)\Phi_x(x,\cdot), \Psi_x(x,\cdot)\rangle\,dx=\int_{\T}\int_{\R} f\Psi \,dy\,dx
	$$
	for any $\Psi \in X$ due to the following estimates:
	\begin{align*}
		\int_{\T}\int_{\R} |\Phi_y\Psi_y| \,dy\,dx & \leq \sum_{k\in \N} \|\phi_k'\|_{L^2(\R)} \|\psi_k'\|_{L^2(\R)} \leq \|\Phi\|_{L^2(\T,H^1(\R))} \|\Psi\|_{L^2(\T,H^1(\R))}, \\
		\int_{\T}\int_{\R} |V_0(y) \Phi_{xx} \Psi| \,dy\,dx & \leq \sum_{k\in \N} \|V_0\|_{L^\infty(\R)} k^2\|\phi_k\|_{L^2(\R)} \|\psi_k\|_{L^2(\R)} \\ &  \leq \|V_0\|_{L^\infty(\R)}  \|\Phi\|_{H^2(\T,L^2(\R))} \|\Psi\|_{L^2(\T,L^2(\R))}, \\
		\int_\T |\langle V_1(\cdot)\Phi_x(x,\cdot), \Psi_x(x,\cdot)\rangle|\,dx & \leq \sum_{k\in \N} \|V_1\|_{H^1\to H^{-1}} \|k\phi_k\|_{H^1(\R)} \|k\psi_k\|_{H^1(\R)}\\  & \leq \|V_1\|_{H^1\to H^{-1}} \|\Phi\|_{H^1(\T,H^1(\R))} \|\Psi\|_{H^1(\T,H^1(\R))}, \\
		\int_{\T}\int_{\R} |f \Psi| \,dy\,dx & \leq \sum_{k\in \N} \|f_k\|_{L^2(\R)} \|\psi_k\|_{L^2(\R)} \leq \|f\|_{L^2(\T,L^2(\R))} \|\Psi\|_{L^2(\T,L^2(\R))}.
	\end{align*}
\end{proof}

Equipped with the above lemma, we use the invertibility of $\tilde L_\lambda$ to reformulate \eqref{eq:Eq} as
\begin{equation}\label{eq:F}
	F(\Phi,\lambda)=0,
\end{equation}
where $F: X \times I_{\lambda_*}\to X$ is given by
\[
F(\Phi,\lambda)=\Phi+\tilde L_\lambda^{-1}\left(\Gamma(y)(\Phi^3)_{xx}-P^*\Phi \right).
\]
We want  to apply bifurcation theory to equation \eqref{eq:F}.
Clearly, $F(0,\lambda)=0$ for any $\lambda\in I_{\lambda_*}$ and the line $\{(0,\lambda)\mid \lambda \in I_{\lambda_*}\}$ constitutes the line of trivial solutions from which we aim to bifurcate at $\lambda =\lambda_*$. The following lemma collects the necessary properties of the map $F$. 

\begin{lem} \label{lem:prop_F}
	The map $F:X\times I_{\lambda_*} \to X$ is a $C^\infty$-map. Moreover the following holds:
	\begin{itemize}
		\item [(i)] The linearization of $F$ about $\Phi=0$, given by
		\[
		D_\Phi F(0,\lambda)= \Id - \tilde L_\lambda^{-1}P^*:X \to X
		\]
		is a Fredholm operator of index zero.  In particular $D_\Phi F(0,\lambda_*)=\Id-P^*$. The kernel of $D_\Phi F(0,\lambda)$ is trivial for $\lambda \in I_{\lambda_*}\setminus\{\lambda_*\}$ and it is given by $\spa\{\Phi^*\}$ if $\lambda=\lambda_*$.
		\item[(ii)] The mixed second derivative of $F$ about $\Phi=0$ is given by 
		$$
		D^2_{\Phi,\lambda} F(0,\lambda)= \tilde L_\lambda^{-1} V_0\partial_x^2 \tilde L_\lambda^{-1} P^*: X\to X.
		$$
	\end{itemize}
\end{lem}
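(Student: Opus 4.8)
The plan is to read off all three assertions from the structure
$F(\Phi,\lambda)=\Phi+\tilde L_\lambda^{-1}\bigl(\Gamma(\cdot)(\Phi^3)_{xx}-P^*\Phi\bigr)$, treating the cubic nonlinearity and the $\lambda$-dependent resolvent $\tilde L_\lambda^{-1}$ separately. First I would record the two Sobolev embeddings that make the cubic term well defined. For $\Phi=\sum_k\phi_k(y)\sin(kx)\in X$ the one--dimensional Agmon inequality $\|\phi_k\|_{L^\infty(\R)}\lesssim\|\phi_k\|_{H^1(\R)}$ together with the Cauchy--Schwarz bound $\sum_k\|\phi_k\|_{H^1(\R)}\le\bigl(\sum_k(1+k^2)^{-1}\bigr)^{1/2}\bigl(\sum_k(1+k^2)\|\phi_k\|_{H^1(\R)}^2\bigr)^{1/2}$ yields
\begin{equation}\label{infinity}
\|\Phi\|_{L^\infty(\T\times\R)}\lesssim\|\Phi\|_X,
\end{equation}
while combining the two factors of $X$ gives $\Phi_x\in H^1(\T\times\R)$, so the two--dimensional embedding $H^1(\T\times\R)\hookrightarrow L^4(\T\times\R)$ produces
\begin{equation}\label{l4}
\|\Phi_x\|_{L^4(\T\times\R)}\lesssim\|\Phi\|_X.
\end{equation}
Using \eqref{infinity}--\eqref{l4}, $\Phi_{xx}\in Y$, and $(\Phi^3)_{xx}=3\Phi^2\Phi_{xx}+6\Phi\Phi_x^2$, the estimates $\|\Phi^2\Phi_{xx}\|_{L^2}\le\|\Phi\|_{L^\infty}^2\|\Phi_{xx}\|_{L^2}$ and $\|\Phi\Phi_x^2\|_{L^2}\le\|\Phi\|_{L^\infty}\|\Phi_x\|_{L^4}^2$ show that $(\Phi_1,\Phi_2,\Phi_3)\mapsto\Gamma\,(\Phi_1\Phi_2\Phi_3)_{xx}$ is a bounded symmetric trilinear map $X\times X\times X\to Y$ (oddness in $x$ is preserved, so the image lies in $Y$); hence $\Phi\mapsto\Gamma(\Phi^3)_{xx}$ is a polynomial, thus $C^\infty$, map $X\to Y$.

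Smoothness of $F$ then reduces to smoothness of $\lambda\mapsto\tilde L_\lambda^{-1}\in\mathcal{L}(Y,X)$. Here I would exploit the affine dependence $L_\lambda=L_{\lambda_*}+(\lambda-\lambda_*)V_0\partial_x^2$, so that $\tilde L_\lambda=\tilde L_{\lambda_*}\bigl(\Id+(\lambda-\lambda_*)K\bigr)$ with $K:=\tilde L_{\lambda_*}^{-1}V_0\partial_x^2\in\mathcal{L}(X)$; indeed $V_0\partial_x^2\colon X\to Y$ is bounded because $\Phi_{xx}\in Y$ and $V_0\in L^\infty(\R)$, and $\tilde L_{\lambda_*}^{-1}\colon Y\to X$ by Lemma~\ref{lem:1D}. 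For $|\lambda-\lambda_*|$ small, $\tilde L_\lambda^{-1}=\bigl(\Id+(\lambda-\lambda_*)K\bigr)^{-1}\tilde L_{\lambda_*}^{-1}$ is given by a Neumann series and is therefore real-analytic in $\lambda$ into $\mathcal{L}(Y,X)$, after possibly shrinking $I_{\lambda_*}$. Composing the smooth cubic map, the bounded projection $P^*$, and the smooth evaluation $(g,\lambda)\mapsto\tilde L_\lambda^{-1}g$ (smooth by bilinearity and the analyticity just established) shows $F\in C^\infty(X\times I_{\lambda_*};X)$, and $F$ indeed maps into $X$ since $\Gamma(\Phi^3)_{xx}-P^*\Phi\in Y$.

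For part (i), the cubic term contributes nothing to the $\Phi$-linearization at $0$, leaving $D_\Phi F(0,\lambda)=\Id-\tilde L_\lambda^{-1}P^*$. Since $P^*$ has rank one, $\tilde L_\lambda^{-1}P^*$ is of finite rank, so $\Id-\tilde L_\lambda^{-1}P^*$ is a compact perturbation of the identity and hence Fredholm of index zero. At $\lambda=\lambda_*$ one has $\tilde L_{\lambda_*}\Phi^*=L_{\lambda_*}\Phi^*+P^*\Phi^*=\Phi^*$, whence $\tilde L_{\lambda_*}^{-1}P^*=P^*$ and $D_\Phi F(0,\lambda_*)=\Id-P^*$. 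To identify the kernel I would apply $\tilde L_\lambda$ to $\Phi=\tilde L_\lambda^{-1}P^*\Phi$ and use $\tilde L_\lambda=L_\lambda+P^*$ to obtain $L_\lambda\Phi=0$; conversely $L_\lambda\Phi=0$ gives $\Phi=\tilde L_\lambda^{-1}P^*\Phi$. Thus the kernel equals $\ker L_\lambda$, which by $(L2)$, read off mode by mode, is trivial for $\lambda\in I_{\lambda_*}\setminus\{\lambda_*\}$ and equals $\spa\{\Phi^*\}$ for $\lambda=\lambda_*$.

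For part (ii) I would differentiate the expression from (i) in $\lambda$. From $\partial_\lambda\tilde L_\lambda=V_0\partial_x^2$ and the resolvent-derivative identity $\partial_\lambda\tilde L_\lambda^{-1}=-\tilde L_\lambda^{-1}(\partial_\lambda\tilde L_\lambda)\tilde L_\lambda^{-1}$ (legitimate because $\partial_\lambda\tilde L_\lambda\colon X\to Y$ is bounded and $\lambda\mapsto\tilde L_\lambda^{-1}$ is smooth) one obtains
\[
D^2_{\Phi,\lambda}F(0,\lambda)=\partial_\lambda\bigl(\Id-\tilde L_\lambda^{-1}P^*\bigr)=\tilde L_\lambda^{-1}V_0\partial_x^2\tilde L_\lambda^{-1}P^*,
\]
as claimed. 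I expect the main obstacle to be the content of the second paragraph: verifying that $\lambda\mapsto\tilde L_\lambda^{-1}$ is genuinely smooth into $\mathcal{L}(Y,X)$ and that its derivative has the stated bounded form. This is precisely where the splitting $V=\lambda V_0+V_1$ (rather than $\lambda(V_0+V_1)$) is essential: it makes $\partial_\lambda\tilde L_\lambda=V_0\partial_x^2$ a bounded operator $X\to Y$, so the distributional part $V_1$ is never differentiated (cf. Remark~\ref{rem:trouble}).
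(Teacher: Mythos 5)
Your proposal is correct and follows the same overall architecture as the paper's proof (boundedness of the cubic term via $L^\infty$ and $L^4$ control, Fredholmness as a compact perturbation of the identity, kernel identification mode by mode through $(L2)$, and the resolvent identity for the mixed derivative), but it substitutes two technical ingredients that are genuinely different. First, for the bound on $\Phi_x$ in $L^4(\T\times\R)$ the paper applies the one-dimensional Gagliardo--Nirenberg inequality to each Fourier mode and then a triple H\"older inequality to sum the series, whereas you observe that $\Phi\in X$ implies $\Phi_x\in H^1(\T\times\R)$ (all of $\Phi_x,\Phi_{xx},\Phi_{xy}$ lie in $L^2$) and invoke the two-dimensional embedding $H^1(\T\times\R)\hookrightarrow L^4(\T\times\R)$; this is shorter and equally valid. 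Second, for smoothness in $\lambda$ the paper simply differentiates the resolvent identity $\frac{d}{d\lambda}\tilde L_\lambda^{-1}=-\tilde L_\lambda^{-1}V_0\partial_x^2\tilde L_\lambda^{-1}$ and checks that the right-hand side is bounded from $Y$ to $X$, while you exploit the affine dependence $\tilde L_\lambda=\tilde L_{\lambda_*}+(\lambda-\lambda_*)V_0\partial_x^2$ to write $\tilde L_\lambda^{-1}=\bigl(\Id+(\lambda-\lambda_*)K\bigr)^{-1}\tilde L_{\lambda_*}^{-1}$ and obtain real-analyticity from the Neumann series. Your route actually establishes differentiability rather than presupposing it, which is a gain in rigor; its only cost is that the series converges only for $|\lambda-\lambda_*|\,\|K\|_{\mathcal{L}(X)}<1$, so either you shrink $I_{\lambda_*}$ (as you do, harmless for the bifurcation argument at $\lambda_*$), or, to get smoothness on all of $I_{\lambda_*}$ as stated, you repeat the expansion around an arbitrary $\lambda_0\in I_{\lambda_*}$, which is legitimate because Lemma~\ref{lem:1D} gives boundedness of $\tilde L_{\lambda_0}^{-1}:Y\to X$ for every $\lambda_0\in I_{\lambda_*}$. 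Parts (i) and (ii) coincide with the paper's argument up to phrasing: the paper routes the kernel computation through the weak-solution statement of Lemma~\ref{lem:1D}, whereas you apply $\tilde L_\lambda$ mode by mode; both reduce to $(L2)$.
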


\begin{proof} Let us first verify the mapping properties of $F$ by checking that $(\Phi^3)_{xx}\in Y$ for $\Phi\in X$. First note that $\Phi\in X$ implies 
	\begin{align}\label{infinity}
		\begin{split}
			\|\Phi\|_\infty &= \sup_{x\in \R,y\in \T} \Bigl|\sum_{k\in \N} \phi_k(y)\sin(kx)\Bigr| \leq \sum_{k\in \N} \|\phi_k\|_{L^\infty(\R)} \leq \sum_{k\in\N} \|\phi_k\|_{H^1(\R)} \\
			& \leq \Bigl(\sum_{k\in \N} \frac{1}{k^2}\Bigr)^\frac{1}{2} \Bigl(\sum_{k\in \N} k^2 \|\phi_k\|_{H^1(\R)}^2\Bigr)^\frac{1}{2}\lesssim\|\Phi\|_{H^1(\T,H^1(\R))} 
		\end{split}
	\end{align}
	and, using $|\cos(kx)|\leq 1$,
	\begin{align*}
		\Bigl(\int_\T\int_\R |\Phi_x|^4 \,dy\,dx\Bigr)^{1/4}  & \leq \sum_{k\in \N} \|k \phi_k\|_{L^4(\T\times\R)} = \sqrt[4]{2\pi} \sum_{k\in \N} |k|\|\phi_k\|_{L^4(\R)}  \lesssim \sum_{k\in \N} |k| \|\phi_k'\|_{L^2(\R)}^\frac{1}{4} \|\phi_k\|_{L^2(\R)}^\frac{3}{4},
	\end{align*}
	by the Gagliardo--Nirenberg inequality, cf. \cite{Friedman}. Using a triple H\"older inequality we obtain that
	\begin{align}\label{l4}
		\begin{split}
			\Bigl(\int_\T\int_\R |\Phi_x|^4 \,dy\,dx\Bigr)^\frac{1}{4} & \leq C\Bigl(\sum_{k\in \N} k^2 \|\phi_k'\|_{L^2(\R)}^2\Bigr)^\frac{1}{8} \Bigl(\sum_{k\in \N} k^4 \|\phi_k\|_{L^2}^2\Bigr)^\frac{3}{8} \Bigl(\sum_{k\in \N} k^{-3/2}\Bigr)^\frac{1}{2}\\
			&   \lesssim \|\Phi\|_{H^1(\T,H^1(\R))}^\frac{1}{4} \|\Phi\|_{H^2(\T,L^2(\R))}^\frac{3}{4}.
		\end{split}
	\end{align}
	Hence, for $\Phi\in X$ we have $(\Phi^3)_{xx} = 3\Phi^2 \Phi_{xx} + 6\Phi\Phi_{x}^2 \in L^2(\T,L^2(\R))\subset Y$ and thus the mapping properties of $F$ are proved.

	The differentiability properties of $F$ with respect to $\Phi$ also follow in a similar way from $\Phi\in L^\infty(\T\times \R)$ and $\Phi_x\in L^4(\T\times \R)$. This can be seen as follows: the (formal) first/second derivatives of $F$ with respect to $\Phi$ are linear/bilinear operators and contain terms of the form $abc_{xx}$ or $ab_xc_x$ where $a,b,c\in X$. Based on the estimates
	\begin{align*}
		\int_\T\int_\R |a b_xc_x|^2\,dy\,dx & \leq \|a\|_\infty^2 \|b_x\|_{L^4(\T \times \R)}^2 \|c_x\|_{L^4(\T \times \R)}^2,\\
		\int_\T\int_\R |abc_{xx}|^2\,dy\,dx & \leq \|a\|_\infty^2 \|b\|_\infty^2 \|c_{xx}\|_{L^2(\T \times \R)}^2
	\end{align*}
	we find in view of \eqref{infinity} and \eqref{l4} that the first/second derivatives of $F$ with respect to $\Phi$ exist, are bounded linear/bilinear operators from $X$ to $Y$, and depend continuously on $\Phi$ and $\lambda$. Due to the cubic nature of the nonlinearity, derivatives of $F$ of order higher than two with respect to $\Phi$ are independent of $\Phi$.

	The differentiability properties of $F$ with respect to $\lambda$ follow from 
	\begin{equation} \label{eq:diff_lambda}
		\frac{d}{d\lambda} \tilde L_\lambda^{-1} = -\tilde L_\lambda^{-1} \frac{d}{d\lambda} \tilde L_\lambda \tilde L_\lambda^{-1} = -\tilde L_\lambda^{-1} V_0 \partial_x^2 \tilde L_\lambda^{-1}
	\end{equation}
	and due to $\tilde L_\lambda^{-1}: Y\to X$ and $V_0\partial_x^2: X\to Y$, we see that the resulting operator on the right-hand side of \eqref{eq:diff_lambda} is indeed a bounded linear map from $Y\to X$. Moreover, \eqref{eq:diff_lambda} explains the formula for $D^2_{\Phi,\lambda} F(0,\lambda)$ in (ii).

	Finally, the formula in (i) shows that $D_\Phi F(0,\lambda)$ is a compact perturbation of the identity, and hence Fredholm of index zero. Let us compute the kernel of $D_\Phi F(0,\lambda)$. If $\Psi\in X$ satisfies $D_\Phi F(0,\lambda)\Psi=0$ then according to Lemma~\ref{lem:1D} we have that $\Psi$ is a weak solution of $L_\lambda\Psi=0$. Then, for $\lambda\neq\lambda_*$ we have $\psi_k=0$ for all $k\in \N$ and hence $\Psi=0$. For $\lambda=\lambda_*$ we have $\psi_k=0$ for all $k\in \N\setminus\{k_*\}$ and $\psi_{k_*}\in \spa\{\phi^*\}$ so that $\Psi\in \spa\{\Phi^*\}$ as claimed.  Notice finally that $D_\Phi F(0,\lambda_*)=\Id-\tilde L_{\lambda_*}^{-1}P^*=\Id-P^*$ since $\range P^*=\spa\{\Phi^*\}$ is the eigenspace of $\tilde L_{\lambda*}$ corresponding to the eigenvalue $1$. This finishes the proof. 
\end{proof}

%{\color{blue}
\begin{rem} \label{rem:trouble}\emph{
Let us briefly describe the difficulty that arises when one considers the bifurcation problem for $V(\lambda,y)=\lambda V(y)$ with $V=V_0+V_1$, i.e., when multiplication with the bifurcation parameter is extended to the distributional potential $V_1$. In this case one already obtains a problem in verifying the $C^1$-property of the map $F$. Formally one finds 
$$
D_\lambda F(\Phi,\lambda) = -\tilde L_\lambda^{-1} V \partial_x^2 \tilde L_\lambda^{-1}(\Gamma(y)(\Phi^3)_{xx}-P^*\Phi).
$$
As above, we would expect to have $\tilde L_\lambda^{-1} V \partial_x^2 \tilde L_\lambda^{-1}:Y\to X$ as a bounded linear map. But this is not the case, as a calculation in the case where $V_0(y)\equiv 1$ and $V_1(y)=\alpha\delta_0(y)$ shows. Namely, let $A= \tilde L_\lambda^{-1}:Y\to X$ and $B=V \partial_x^2$. Then $B:X\to H^{-1}(\T;H^{-1}(\R))$ and $C=\tilde L_\lambda^{-1}: \range(B)\to H^{3/2}(\T;L^2(\R))\cap H^{1/2}(\T;H^1(\R))\not\subset X$, i.e., we are missing a half-derivative in the regularity gain.
}
\end{rem}

The advantage of formulating the problem \eqref{eq:Eq} as $F(\Phi,\lambda)=0$ relies on the fact that  its linearization about $\Phi=0$ is of the form identity plus compact operator, which provides the Fredholm property for free. Applying the Crandall--Rabinowitz theorem (cf. e.g. \cite{CrRab_bifurcation} or \cite[Theorem I.5.1]{Kielhoefer}), we prove that assumption $(L0)$--$(L3)$ on the family of Schr\"odinger operators $L_\lambda^k$ are sufficient to guarantee the existence of nontrivial small-amplitude solutions of  \eqref{eq:Eq} provided a certain \emph{transversality condition} is satisfied, which we can formulate in terms of the potential $V_0$, see \eqref{eq:t}.

\medspace

\textbf{Proof of Theorem \ref{main1}.
}
%\begin{proof}[Proof of Theorem \ref{main1}]
    Recall from Lemma~\ref{lem:prop_F} that $D_\Phi F(0,\lambda_*)=\Id-P^*:X\to X$. Moreover, $D_\Phi F(0,\lambda_*)$ is a Fredholm operator of index zero with a one-dimensional kernel spanned by $\Phi^*$. Correspondingly, we can split the underlying space as follows: 
    $$
    X=\spa\{\Phi^*\}\oplus\spa\{\Phi^*\}^{\perp_{L^2}}=\ker(D_\Phi F(0,\lambda_*))\oplus\range(D_\Phi F(0,\lambda_*)).
    $$ 
    Hence, according to the Crandall--Rabinowitz theorem, the existence of a local bifurcation branch of nontrivial solutions  \eqref{eq:Q_trav} follows provided that the transversality condition
	\[
	D_{\Phi \lambda}^2 F(0,\lambda_*)\Phi^* \notin \range D_\Phi F(0,\lambda_*)=\spa\{\Phi^*\}^{\perp_{L^2}}
	\]
	is satisfied.  In view of $\tilde L_{\lambda_*}^{-1}\Phi^*=\Phi^*$ and the symmetry of $\tilde L_{\lambda_*}^{-1}$ (which follows from the  self-adjointness of $L^{k}_{\lambda_*}, \tilde L^{k}_{\lambda_*}$) the transversality condition holds since  
	$$
	\langle D_{\Phi \lambda}^2 F(0,\lambda_*)\Phi^*,\Phi^*\rangle_{L^2(\T\times\R)} =  \langle \tilde L_{\lambda_*}^{-1} V_0 \Phi^*_{xx},\Phi^*\rangle_{L^2(\T\times\R)} = \langle V_0 \Phi^*_{xx}, \Phi^* \rangle_{L^2(\T\times\R)} = -\pi k^2 \langle V_0 \phi^*_x,\phi^*_x\rangle_{L^2(\R)}\neq 0
	$$
	due to assumption \eqref{eq:t} of the theorem.

\medspace

Now, we are going to state the bifurcation formulas with the help of the Lyapunov--Schmidt reduction (cf. \cite[Theorem I.2.3]{Kielhoefer}). 
The Lyapunov--Schmidt reduction theorem in our context reads as follows:

\begin{thm}[Lyapunov--Schmidt reduction, \cite{Kielhoefer}, Theorem I.2.3]  Let $F:X\times I\to X$ be a $C^\infty$-map and $X=N\oplus N^{\perp_{L^2}}$ with $N=\spa \{\Phi^*\}=\ker D_\Phi F(0,\lambda_*)$ and $\lambda_*\in I$.
	There exists a neighborhood $O\times I'\subset \{(\Phi,\lambda)\in X\times \R_+\} $ of the bifurcation point $(0,\lambda_*)$ such that the problem
	\[
	F(\Phi,\lambda)=0\qquad \mbox{for}\quad (\Phi,\lambda)\in O\times I'
	\]
	is equivalent to the finite-dimensional problem
	\begin{equation}\label{eq:LF}
		\eta(\e \Phi^*,\lambda):=P^*F(\e \Phi^*+\psi(\e\Phi^*,\lambda),\lambda)=0
	\end{equation}
	for functions $\eta\in C^\infty(O_N\times I',N)$, $\psi \in C^\infty(O_N\times  I', N^{\perp_{L^2}})$ where $O_N\subset N$  is an open neighborhood of the zero element in $N$. One has that
	\[
	\eta(0,\lambda_*)=\psi(0,\lambda_*)=D_\Phi \psi(0,\lambda_*)=0
	\]
	and solving \eqref{eq:LF} provides a solution
	\[
	\Phi=\e\Phi^*+\psi(\e \Phi^*,\lambda)
	\]
	of the infinite-dimensional problem $F(\Phi,\lambda)=0$.
\end{thm}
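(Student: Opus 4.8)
The plan is to carry out the classical Lyapunov--Schmidt reduction, splitting the equation $F(\Phi,\lambda)=0$ into two equations along the decomposition $X=N\oplus N^{\perp_{L^2}}$ and solving one of them via the implicit function theorem. Write $P^*$ for the $L^2$-orthogonal projection onto $N=\spa\{\Phi^*\}$ and $Q:=\Id-P^*$ for the complementary projection onto $N^{\perp_{L^2}}$. Since $Q$ is a bounded projection, the equation $F(\Phi,\lambda)=0$ is equivalent to the pair
\[
QF(\Phi,\lambda)=0\qquad\text{and}\qquad P^*F(\Phi,\lambda)=0.
\]
Parametrising the kernel part as $\e\Phi^*$ and the complementary part as $w\in N^{\perp_{L^2}}$, I would substitute $\Phi=\e\Phi^*+w$ and first treat the auxiliary equation $QF(\e\Phi^*+w,\lambda)=0$ as an equation for $w$ in terms of $(\e,\lambda)$.

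To solve it, I would define $G:\R\times N^{\perp_{L^2}}\times I\to N^{\perp_{L^2}}$ by $G(\e,w,\lambda):=QF(\e\Phi^*+w,\lambda)$. As $F(0,\lambda)=0$ for every $\lambda$, we have $G(0,0,\lambda_*)=0$. The decisive point is that the partial derivative $D_wG(0,0,\lambda_*)=Q\,D_\Phi F(0,\lambda_*)\big|_{N^{\perp_{L^2}}}$ is an isomorphism of $N^{\perp_{L^2}}$. Indeed, $D_\Phi F(0,\lambda_*)$ is Fredholm of index zero with $\ker=N$ and $\range=N^{\perp_{L^2}}$; its restriction to $N^{\perp_{L^2}}$ is therefore injective (a kernel element in $N^{\perp_{L^2}}$ lies in $N\cap N^{\perp_{L^2}}=\{0\}$) and surjective onto $N^{\perp_{L^2}}$, while $Q$ acts as the identity on that range, so the composition is a continuous bijection and hence an isomorphism by the open mapping theorem. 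The implicit function theorem, applied in the $C^\infty$ category since $F$ is smooth, then furnishes a neighbourhood of $(0,\lambda_*)$ and a unique $C^\infty$ map $\psi$ with $w=\psi(\e\Phi^*,\lambda)\in N^{\perp_{L^2}}$ solving the auxiliary equation and obeying $\psi(0,\lambda_*)=0$.

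Substituting this back, I would set $\eta(\e\Phi^*,\lambda):=P^*F(\e\Phi^*+\psi(\e\Phi^*,\lambda),\lambda)$; this map is $C^\infty$, satisfies $\eta(0,\lambda_*)=0$, and by construction $\Phi=\e\Phi^*+\psi(\e\Phi^*,\lambda)$ solves $F(\Phi,\lambda)=0$ exactly when $\eta(\e\Phi^*,\lambda)=0$, which is the asserted equivalence with a one-dimensional problem. The only remaining identity is $D_\Phi\psi(0,\lambda_*)=0$. For this I would differentiate the auxiliary relation $QF(\e\Phi^*+\psi,\lambda)=0$ with respect to $\e$ at $(0,\lambda_*)$, giving $Q\,D_\Phi F(0,\lambda_*)\bigl(\Phi^*+D_\e\psi(0,\lambda_*)\bigr)=0$. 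The term $D_\Phi F(0,\lambda_*)\Phi^*$ vanishes because $\Phi^*\in\ker D_\Phi F(0,\lambda_*)$, and since $D_\Phi F(0,\lambda_*)$ restricted to $N^{\perp_{L^2}}$ is the isomorphism identified above and $D_\e\psi(0,\lambda_*)\in N^{\perp_{L^2}}$, injectivity forces $D_\e\psi(0,\lambda_*)=0$, i.e. $D_\Phi\psi(0,\lambda_*)=0$.

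The main obstacle is the invertibility of $D_wG(0,0,\lambda_*)$ on the complement $N^{\perp_{L^2}}$; this is exactly where the Fredholm-index-zero structure of $D_\Phi F(0,\lambda_*)$ and the identification $\range D_\Phi F(0,\lambda_*)=N^{\perp_{L^2}}$ enter (in the present application these are transparent, since $D_\Phi F(0,\lambda_*)=\Id-P^*$ coincides with $Q$). Once this is established, the remainder is a routine application of the implicit function theorem together with differentiation of the implicit relation.
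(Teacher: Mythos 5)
Your proof is correct: it is the classical Lyapunov--Schmidt reduction via the implicit function theorem, which is precisely the argument behind the cited result (the paper states this theorem as a citation of Kielh\"ofer, Theorem I.2.3, and gives no proof of its own). Your explicit flagging of the hidden hypothesis --- that $\range D_\Phi F(0,\lambda_*)=N^{\perp_{L^2}}$, i.e.\ the Fredholm-index-zero structure with range complementary to the kernel, which the informal statement omits but which holds in the paper's application since $D_\Phi F(0,\lambda_*)=\Id-P^*=Q$ --- is exactly the right caveat and closes the only gap between the statement as written and a rigorous proof.
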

We have the following Fr\'echet derivatives:
\begin{align*}
	D_\Phi F(\Phi,\lambda)\Phi^*&=\Phi^*+\tilde L_\lambda^{-1}\left(\Gamma(y)3(\Phi^2\Phi^*)_{xx}-P^*\Phi^*\right),\\
	D_{\Phi\Phi}^2 F(\Phi,\lambda)[\Phi^*,\Phi^*]&=\tilde L_\lambda^{-1}\left(\Gamma(y)6\Phi (\Phi^*)^2\right)_{xx},\\
	D_{\Phi\Phi\Phi}^3F(\Phi,\lambda)[\Phi^*,\Phi^*,\Phi^*]&=\tilde L_\lambda^{-1}\left(\Gamma(y)6(\Phi^*)^3\right)_{xx}.
\end{align*}

According to \cite[Section I.6]{Kielhoefer}, we have that
\[
\dot \lambda(0)=-\frac{1}{2}\frac{\langle D_{\Phi\Phi}^2F(0,\lambda_*)[\Phi^*,\Phi^*],\Phi^*\rangle_{L^2(\T\times\R)}}{\langle D_{\Phi \lambda}^2F(0,\lambda_*)\Phi^*,\Phi^*\rangle_{L^2(\T\times\R)}}.
\]
In view of $F$ being cubic in $\Phi$ it is clear that $\dot \lambda(0)=0$. In this case the second derivative is given by
\begin{equation}\label{eq:D}
	\ddot \lambda(0)=-\frac{1}{3}\frac{\langle D_{\Phi\Phi\Phi}^3\eta(0,\lambda_*)[\Phi^*,\Phi^*,\Phi^*],\Phi^* \rangle_{L^2(\T\times\R)}}{\langle D_{\Phi\lambda}^2F(0,\lambda_*)\Phi^*,\Phi^*\rangle_{L^2(\T\times\R)}}.
\end{equation}

\begin{prop}\label{prop:bf}
	Let $\{(\Phi(\e),\lambda(\e))\mid |\e|<\e_0\}\subset X\times I_{\lambda_*}$ be the local bifurcation curve found in Theorem \ref{main1} corresponding to the bifurcation point $(0,\lambda_*)$. Then
	\[
	\dot \lambda(0)=0\qquad \mbox{and}\qquad 	\ddot \lambda(0)= -\frac{3\pi}{2}\frac{\int_{\R}\Gamma(y)(\phi^*)^4(y)\,dy}{\int_\R V_0(y)(\phi^*)^2(y)\,dy}.
	\]
\end{prop}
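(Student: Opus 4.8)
The plan is to read off the two bifurcation quantities directly from the Lyapunov--Schmidt formulas quoted above, inserting the explicit Fréchet derivatives of $F$ from Lemma~\ref{lem:prop_F} together with the normalized kernel element $\Phi^*(x,y)=\frac{1}{\sqrt{\pi}}\phi^*(y)\sin(k_*x)$.

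For $\dot\lambda(0)$ the key observation is that the nonlinearity in $F$ is cubic, so the second derivative $D^2_{\Phi\Phi}F(\Phi,\lambda)$ carries an explicit factor $\Phi$ and therefore vanishes identically at $\Phi=0$; in particular $D^2_{\Phi\Phi}F(0,\lambda_*)[\Phi^*,\Phi^*]=0$. The denominator $\langle D^2_{\Phi\lambda}F(0,\lambda_*)\Phi^*,\Phi^*\rangle_{L^2(\T\times\R)}$ is nonzero — this is precisely the transversality quantity already shown to be nonvanishing in the proof of Theorem~\ref{main1} under assumption \eqref{eq:t}. Hence the first formula yields $\dot\lambda(0)=0$, which places us in the regime where $\ddot\lambda(0)$ is governed by \eqref{eq:D}.

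The main step is to reduce the third derivative of the reduced map $D^3_{\Phi\Phi\Phi}\eta(0,\lambda_*)$ appearing in \eqref{eq:D} to the corresponding derivative of $F$. Differentiating $\eta(\e\Phi^*,\lambda_*)=P^*F(\e\Phi^*+\psi(\e\Phi^*,\lambda_*),\lambda_*)$ three times in $\e$ via the chain rule and evaluating at $\e=0$, I would show that every term other than $P^*D^3_{\Phi\Phi\Phi}F(0,\lambda_*)[\Phi^*,\Phi^*,\Phi^*]$ drops out: the corrections either contain the vanishing data $\psi(0,\lambda_*)=0$ or $D_\Phi\psi(0,\lambda_*)=0$, or the vanishing bilinear form $D^2_{\Phi\Phi}F(0,\lambda_*)=0$, or the factor $P^*D_\Phi F(0,\lambda_*)=P^*(\Id-P^*)=0$. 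This gives
\[
D^3_{\Phi\Phi\Phi}\eta(0,\lambda_*)[\Phi^*,\Phi^*,\Phi^*]=P^*D^3_{\Phi\Phi\Phi}F(0,\lambda_*)[\Phi^*,\Phi^*,\Phi^*].
\]
Checking that all $\psi$-corrections really vanish is the technical heart of the argument and the place where the cubic structure of the problem is essential.

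It then remains to evaluate both pairings explicitly. Using $\langle P^*w,\Phi^*\rangle_{L^2(\T\times\R)}=\langle w,\Phi^*\rangle_{L^2(\T\times\R)}$, the self-adjointness of $\tilde L_{\lambda_*}^{-1}$ and the identity $\tilde L_{\lambda_*}^{-1}\Phi^*=\Phi^*$, the numerator collapses to $\langle (6\Gamma(y)(\Phi^*)^3)_{xx},\Phi^*\rangle_{L^2(\T\times\R)}$. Substituting $(\Phi^*)^3=\frac{1}{\pi^{3/2}}(\phi^*)^3\sin^3(k_*x)$ and the trigonometric identity $\sin^3(k_*x)=\frac{3}{4}\sin(k_*x)-\frac{1}{4}\sin(3k_*x)$, the third-harmonic contribution is $L^2(\T)$-orthogonal to $\sin(k_*x)$ and disappears, leaving a numerical multiple of $\int_\R\Gamma(y)(\phi^*)^4(y)\,dy$. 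The denominator equals the transversality term $\langle V_0\Phi^*_{xx},\Phi^*\rangle_{L^2(\T\times\R)}$, which reduces to a multiple of $\int_\R V_0(y)(\phi^*)^2(y)\,dy$. Inserting both into \eqref{eq:D} and collecting the numerical constants then produces the asserted value of $\ddot\lambda(0)$.
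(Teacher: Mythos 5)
Your proposal is correct and follows essentially the same route as the paper's proof: Lyapunov--Schmidt with the Kielh\"ofer bifurcation formulas, vanishing of all correction terms in $D^3_{\Phi\Phi\Phi}\eta(0,\lambda_*)$ due to the cubic structure (the paper simply cites \cite[Eq.~(I.6.5)]{Kielhoefer} rather than re-deriving the chain-rule expansion as you do), and then evaluation of both pairings via self-adjointness of $\tilde L_{\lambda_*}^{-1}$, the identity $\tilde L_{\lambda_*}^{-1}\Phi^*=\Phi^*$, and the orthogonality of the third harmonic of $\sin^3(k_*x)$. One caveat on the final constant: whether a factor $\pi$ appears depends on whether one pairs with the $L^2$-normalized $\Phi^*=\frac{1}{\sqrt{\pi}}\phi^*\sin(k_*x)$ or with $\phi^*\sin(k_*x)$; the paper's own computation (consistent with Theorem~\ref{main1}) yields $\ddot\lambda(0)=-\frac{3}{2}\int_\R\Gamma(\phi^*)^4\,dy\big/\int_\R V_0(\phi^*)^2\,dy$, so the extra $\pi$ in the proposition's displayed formula is an inconsistency of the paper, not of your argument.
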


\begin{proof}
	As already mentioned, the cubic nonlinearity of $F$ implies already that $\dot \lambda(0)=0$. We are left to compute the second derivative of $\lambda$ at the origin. According to the formula in \eqref{eq:D} we need to compute the third derivative of $\eta$ with  respect to $\Phi$ evaluated at $(0,\lambda_*)$. As for instance in  \cite[Eq. (I.6.5)]{Kielhoefer} we obtain that
	\begin{align*}
		D_{\Phi\Phi\Phi}^3\eta(0,\lambda_*)[\Phi^*,\Phi^*,\Phi^*]=&P^*D_{\Phi\Phi\Phi}^3F(0,\lambda_*)[\Phi^*,\Phi^*,\Phi^*]+3P^*D_{\Phi\Phi}^2F(0,\lambda_*)[\Phi^*,D_{\Phi\Phi}^2\psi(0,\lambda_*)[\Phi^*,\Phi^*]].
	\end{align*}
	Again, since $F$ is cubic in $\Phi$, we have that $D_{\Phi\Phi}^2F(0,\lambda_*)=0$, whence
	\begin{align*}
		\ddot \lambda(0)&=-\frac{1}{3}\frac{\langle P^*D_{\Phi\Phi\Phi}^3F(0,\lambda_*)[\Phi^*,\Phi^*,\Phi^*],\Phi^* \rangle_{L^2(\T\times\R)}}{\langle D_{\Phi\lambda}^2F(0,\lambda_*)\Phi^*,\Phi^*\rangle_{L^2(\T\times\R)}}\\
		&=-\frac{1}{3}\frac{\langle D_{\Phi\Phi\Phi}^3F(0,\lambda_*)[\Phi^*,\Phi^*,\Phi^*],\Phi^* \rangle_{L^2(\T\times\R)}}{\langle D_{\Phi\lambda}^2F(0,\lambda_*)\Phi^*,\Phi^*\rangle_{L^2(\T\times\R)}}.
	\end{align*}
	We have that
	\begin{align*}
		\langle D_{\Phi\Phi\Phi}^3F(0,\lambda_*)[\Phi^*,\Phi^*,\Phi^*],\Phi^* \rangle_{L^2(\T\times\R)}&=\langle \tilde L_{\lambda_*}^{-1}\left(\Gamma(y)6(\Phi^*)^3\right)_{xx},\Phi^* \rangle_{L^2(\T\times\R)}.
	\end{align*}
	Using the symmetry of $\tilde L_{\lambda_*}$ together with $\tilde L^{-1}_{\lambda_*}\Phi^*=\Phi^*$, we obtain that
	\begin{align*}
		\langle D_{\Phi\Phi\Phi}^3F(0,\lambda_*)[\Phi^*,\Phi^*,\Phi^*],\Phi^* \rangle_{L^2(\T\times\R)}&=6\langle \Gamma(y)\left((\Phi^*)^3\right)_{xx},\Phi^* \rangle_{L^2(\T\times\R)}=-\frac{9}{2}\pi k_*^2\int_{\R}\Gamma(y)(\phi^*)^4(y)\,dy
	\end{align*}
	and we know already that the denominator in $\ddot \lambda(0)$ is given by
	\begin{align*}
		\langle D_{\Phi \lambda}^2F(0,\lambda_*)\Phi^*,\Phi^*\rangle_{L^2(\T\times\R)} &= \langle V_0(y)\Phi^*_{xx},\Phi^*\rangle_{L^2(\T\times\R)}= -\pi k_*^2 \int_\R V_0(y)(\phi^*)^2(y)\,dy.
	\end{align*}
	Summarizing, we conclude that
	\[
	\ddot \lambda(0)= -\frac{3}{2}\frac{\int_{\R}\Gamma(y)(\phi^*)^4(y)\,dy}{\int_\R V_0(y)(\phi^*)^2(y)\,dy}.
	\]

\end{proof}

%\bigskip

\section{Examples for regular $\Gamma$} \label{S:regular}

In what follows, we consider specific examples of potentials $V$ and prove Corollary~\ref{cor:p1} and Corollary~\ref{cor:p2}, which state the existence of traveling waves of \eqref{eq:Q_trav} in the specific case when the potentials are given as in (P1), (P2), respectively.  Both Corollary~\ref{cor:p1} and Corollary~\ref{cor:p2} are immediate consequences of Theorem~\ref{main1} and Proposition~\ref{prop:bf}, provided conditions $(L0)-(L3)$ are satisfied. 

\medskip

Recall that $V(\lambda,y)=\lambda V_0(y)+V_1(y)$ where in case (P1) we have $V_0(y)=1$, $V_1(y)=\alpha\delta_0(y)$ and in case (P2) we have $V_0(y)=\textbf{1}_{|y|\geq b}$, $V_1(y)=\beta  \textbf{1}_{|y|<b}+ \alpha \delta_0(y)$. The transversality condition \eqref{eq:t} is trivially satisfied, since in both cases $V_0\geq 0$ and $\not\equiv 0$. It is also clear that $(L0)$ holds true. The beginning of this section will be valid both for (P1) and (P2) since at the general level we may consider (P1) as a special case of (P2) with $\beta=\lambda$. In the subsequent subsections the considerations will split according to the two cases. 

\medskip

Let us consider the operator
\[
L_\lambda^k:=-\frac{d^2}{dy^2}+k^2(1-\lambda\textbf{1}_{|y|\geq b} -\beta  \textbf{1}_{|y|<b}-\alpha\delta_0(y))
\]
with $\lambda, \beta<1$. According to \cite{christ_stolz} the operator $L_\lambda^k:D(L_\lambda^k)\subset L^2(\R)\to L^2(\R)$ is self-adjoint on the domain
\[
D(L_\lambda^k)=\{\phi \in H^1(\R)\mid \phi\in H^2(-\infty,0)\cap H^2(0,\infty), \phi^\prime(0_+)-\phi^\prime(0_-)=-k^2\alpha\phi(0)\};
\]
thereby $(L1)$ is fulfilled. Moreover, $\sigma_{ess}(L_k)=[k^2(1-\lambda),\infty)$ according to Lemma \ref{lem:spectrum}. Next we consider the point spectrum of $L_k$, i.e., the eigenvalue problem of finding $\phi\in D(L_\lambda^k)$ with $L_\lambda^k \phi = k^2 \mu \phi$ where $\tilde\mu=k^2\mu$ is the actual eigenvalue. Setting $\tilde\lambda =\lambda+\mu$ and $\tilde\beta = \beta+\mu$ the eigenvalue problem then reduces to 
\begin{equation} \label{ev_problem}
\left\{\begin{split}
-\phi'' +k^2(1-\tilde\lambda\textbf{1}_{|y|\geq b}-\tilde\beta \textbf{1}_{|y|<b})\phi& =0, \quad y\in (-\infty, 0)\cup (0,\infty),\\
\phi'(0+)-\phi'(0-)+k^2\alpha \phi(0) &=0.
\end{split}\right.
\end{equation}
For reasons that will become obvious in the subsequent discussion we suppose $\mu$ to be so small that $\tilde\lambda, \tilde\beta<1$. In Lemma~\ref{eigenvalue_condition} in the Appendix we show that this problem is solvable (with a one-dimensional eigenspace) if and only if
\begin{equation}
\label{this_makes_an_eigenvalue}
\frac{k\alpha}{2\sqrt{1-\tilde\beta}} = \frac{\sqrt{1-\tilde\beta}\sinh(k\sqrt{1-\tilde\beta}b)+\sqrt{1-\tilde\lambda}\cosh(k\sqrt{1-\tilde\beta}b)}{\sqrt{1-\tilde\beta}\cosh(k\sqrt{1-\tilde\beta}b)+\sqrt{1-\tilde\lambda}\sinh(k\sqrt{1-\tilde\beta}b)}.
\end{equation}
Now we will split the discussion into subsections according to the cases (P1) and (P2), verifying $(L2)$ and $(L3)$ separately.

\subsection{(P1) $V$ a $\delta$-potential on a positive background}
\label{Ss:P1}

Here we take $V_0=1$ and $V_1=\alpha\delta_0$ and $V(\lambda,y)=\lambda + \alpha\delta_0(y)$ with $\alpha>0$ and $\lambda<1$.
In the subsequent results of Lemma~\ref{prop:A1}, Lemma \ref{prop:A2} we verify that the family of linear operators $L_\lambda^k$ satisfies also the assumptions $(L2)$ and $(L3)$ in Theorem \ref{main1}.  Since (P1) is a special case of (P2) with $\lambda=\beta$ we see that the eigenvalue condition \eqref{this_makes_an_eigenvalue} becomes 
\begin{equation} \label{ev_cond_p1}
\frac{k\alpha}{2\sqrt{1-\tilde\lambda}} =1.
\end{equation}
This leads to the following lemma.

\begin{lem}\label{prop:A1}
 Let us fix a wavenumber $k_*\in \N$ and let $\lambda_*<1$. We determine $\alpha>0$ such that
\begin{equation}\label{eq:bifurcation_condition}
    \alpha=\frac{2\sqrt{1-\lambda_*}}{k_*}.
\end{equation}
Then there exists an open interval $I_{\lambda_*}\subset \R$ containing $\lambda_*$ such that
	\[
	\dim \ker L_{\lambda_*}^{k_*}=1,
	\]
	and $\ker L_{\lambda}^k=\{0\}$ for any $(k,\lambda)\in \N\times I_{\lambda_*}$ with $(k,\lambda)\neq (k_*,\lambda_*)$.
\end{lem}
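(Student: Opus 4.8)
The plan is to reduce the whole statement to the explicit eigenvalue condition \eqref{ev_cond_p1} and then to track, as a function of the wavenumber $k$, the unique value of $\lambda$ at which zero enters the spectrum of $L_\lambda^k$.

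First I would observe that, since $\lambda<1$, the essential spectrum $\sigma_{ess}(L_\lambda^k)=[k^2(1-\lambda),\infty)$ (Lemma~\ref{lem:spectrum}) lies strictly to the right of the origin. Hence $0\in\sigma(L_\lambda^k)$ precisely when $0$ is an eigenvalue, so that $\ker L_\lambda^k\neq\{0\}$ is equivalent to $0\in\sigma(L_\lambda^k)$. To decide when this occurs I would solve the zero-energy equation $-\phi''+k^2(1-\lambda)\phi=0$ on $\R\setminus\{0\}$ directly: the only $L^2$-solutions are the multiples of $e^{-k\sqrt{1-\lambda}\,|y|}$, and imposing the jump condition $\phi'(0_+)-\phi'(0_-)=-k^2\alpha\phi(0)$ from the description of $D(L_\lambda^k)$ returns exactly \eqref{ev_cond_p1}, i.e. $k\alpha=2\sqrt{1-\lambda}$. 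Because the space of decaying solutions of a second-order ODE is one-dimensional, this also shows that whenever zero is an eigenvalue the eigenspace is one-dimensional; in particular the bifurcation condition \eqref{eq:bifurcation_condition} makes $k_*\alpha=2\sqrt{1-\lambda_*}$ hold and so yields $\dim\ker L_{\lambda_*}^{k_*}=1$.

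Next, with $\alpha$ frozen at $2\sqrt{1-\lambda_*}/k_*$, I would solve $k\alpha=2\sqrt{1-\lambda}$ for $\lambda$ and obtain that the single value of the parameter at which $L_\lambda^k$ acquires a kernel is
\[
\lambda_k:=1-\frac{k^2}{k_*^2}(1-\lambda_*).
\]
The structural facts I would extract are that $k\mapsto\lambda_k$ is strictly decreasing, that $\lambda_{k_*}=\lambda_*$, and that $\lambda_k\to-\infty$ as $k\to\infty$. Consequently the crossing values split into the finitely many $\lambda_1>\dots>\lambda_{k_*-1}>\lambda_*$ (empty if $k_*=1$) and the infinitely many $\lambda_{k_*+1}>\lambda_{k_*+2}>\dots$, all lying strictly below $\lambda_*$ and tending to $-\infty$. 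In particular $\lambda_k\neq\lambda_*$ for every $k\neq k_*$, and the set $\{\lambda_k:k\neq k_*\}$ does not accumulate at $\lambda_*$.

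To conclude I would choose $I_{\lambda_*}$ to be any open interval about $\lambda_*$ contained in $(\lambda_{k_*+1},\lambda_{k_*-1})\cap(-\infty,1)$, with the convention $\lambda_{k_*-1}:=+\infty$ when $k_*=1$. By construction $I_{\lambda_*}$ avoids every $\lambda_k$ with $k\neq k_*$, so $0\in\rho(L_\lambda^k)$, i.e. $\ker L_\lambda^k=\{0\}$, for all such pairs, while for $k=k_*$ the only parameter in $I_{\lambda_*}$ producing a nontrivial kernel is $\lambda_*$ itself. This is precisely the asserted triviality of $\ker L_\lambda^k$ for $(k,\lambda)\neq(k_*,\lambda_*)$. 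The one genuinely substantive point---rather than a routine computation---is the non-accumulation argument of the previous paragraph: one must rule out that the infinitely many high-$k$ crossings pile up near $\lambda_*$. Here this is immediate from the strict monotonicity of $\lambda_k$ together with $\lambda_k\to-\infty$, so no real obstacle remains.
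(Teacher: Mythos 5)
Your proposal is correct and takes essentially the same route as the paper: both reduce the kernel question to the explicit zero-eigenvalue condition $k\alpha=2\sqrt{1-\lambda}$ of \eqref{ev_cond_p1} (with one-dimensional eigenspace spanned by $e^{-k\sqrt{1-\lambda}\,|y|}$) and then isolate $(k_*,\lambda_*)$ using that $k$ ranges over the integers. The only differences are presentational: the paper imports the eigenvalue condition from the general step-potential computation in Lemma~\ref{eigenvalue_condition} and chooses $I_{\lambda_*}$ ``sufficiently small'', whereas you re-derive the condition directly for the constant background and make the crossing values $\lambda_k=1-(k/k_*)^2(1-\lambda_*)$ and the interval choice explicit.
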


\begin{proof}
    Since we are considering the zero-eigenvalue of $L_{\lambda}^{k}$ we have $\mu=0$ and $\tilde\lambda=\lambda$. Together with our choice of $\alpha$ the eigenvalue condition \eqref{ev_cond_p1} becomes 
    $$
     \frac{k}{k_*} = \frac{\sqrt{1-\lambda}}{\sqrt{1-\lambda_*}}.
    $$
    Recall that $k\in \N$ is integer valued. Therefore, choosing a sufficiently small interval $I_{\lambda_*}\subset (-\infty,1)$ that contains $\lambda_*$ the eigenvalue condition is satisfied for $\lambda\in I_{\lambda_*}$ and $k\in \N$ if and only if $\lambda=\lambda_*$ and $k=k_*$. Moreover, for $k=k_*$ and $\lambda=\lambda_*$ the eigenspace is one-dimensional.
%    {\color{red} We have that $\phi\in \ker L_{\lambda_*}^{k}$ if and only if $\phi \in D(L^k_{\lambda_*})$ and
%	\[
%	-\phi^{\prime \prime}+k^2(1-\lambda_*)\phi =0,\quad y\in (-\infty, 0)\cup (0,\infty).
%	\]
%	Under the condition that $\phi \in D(L^k_{\lambda_*})$, which requires in particular that $\phi$ is continuous on $\R$ and decaying at infinity, the ordinary differential equation can be solved explicitly by
%	\[
%	\phi(y)=c_k e^{-k\sqrt{1-\lambda_*}|y|},\qquad y\in\R\setminus\{0\},
%	\]
%	while the boundary condition at $y=0$ requires that
%	\[
%	c_kk\left( 2\sqrt{1-\lambda_*}-k\alpha\right)=0.
%	\]
%	Thus, either, $c_k=0$ which implies that $\ker L_{\lambda_*}^k=0$ or $k\alpha = 2\sqrt{1-\lambda_*}$, but the latter condition only holds true if $k\in \N$ satisfies the bifurcation condition \eqref{eq:bifurcation_condition}; that is if and only if $k=k_*$. By continuity of $\frac{2\sqrt{1-\lambda}}{\alpha}$ with respect to $\lambda$, we conclude that there exists an interval $I_{\lambda_*}$ such that $\ker L_{\lambda}^k=\{0\}$ for any $(k,\lambda)\in \N\times I_{\lambda_*}$ with $(k,\lambda)\neq (k_*,\lambda_*)$.}
\end{proof}

 It is clear that kernel of $L_{\lambda_*}^{k_*}$ is spanned by the $L^2(\R)$-unitary element
\[
\phi^*(y):= \sqrt{k_*\sqrt{1-\lambda_*}}e^{-k_*\sqrt{1-\lambda_*}|y|}
\]
 since $L_{\lambda_*}^{k_*}\phi^*=0$ in $\R\setminus\{0\}$ and it satisfies ${\phi^*}'(0+)-{\phi^*}'(0-)+\alpha k_*^2\phi^*(0)=0$. 
The above lemma ensures that assumption $(L2)$ is satisfied.  The following lemma concerns the spectral properties of $ L_\lambda^k$ and shows that  assumption $(L3)$ holds true. 

\begin{lem}\label{prop:A2}
	There exists an open interval $I_{\lambda_*}\subset \R$ containing $\lambda_*$ such that the following holds for all $k\geq 3k_*$ and all $\lambda\in I_{\lambda_*}$:
	if $L_\lambda^k \phi =f$ for some $f\in L^2(\R)$, then
	\begin{equation*}\label{eq:Est}
		\|\phi\|_{L^2(\R)}\lesssim \frac{1}{k^2}\|f\|_{L^2(\R)}\qquad\mbox{and}\qquad	\|\phi^\prime \|_{L^2(\R)}\lesssim \frac{1}{k}\|f\|_{L^2(\R)}.
	\end{equation*}
	In particular, there exists a constant $c=c(k_*,|I_{\lambda_*}|)$, depending on $k_*$ and the size of the interval $I_{\lambda_*}$, such that 
	\begin{equation*}\label{eq:gap}
		(-ck^2,ck^2)\subset \rho (L_\lambda^k) \quad \mbox{ for every} \quad k \geq 3k_*, \lambda\in I_{\lambda_*}.
	\end{equation*} 
\end{lem}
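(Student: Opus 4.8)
The plan is to compute the spectrum of $L_\lambda^k$ explicitly for $k\geq 3k_*$, read off a spectral gap of width comparable to $k^2$ around zero, and convert this into the two resolvent bounds. Since here $V_0\equiv 1$ and $V_1=\alpha\delta_0$, the operator is $L_\lambda^k=-\frac{d^2}{dy^2}+\kappa^2-k^2\alpha\delta_0$ with $\kappa:=k\sqrt{1-\lambda}$. By Lemma~\ref{lem:spectrum} one has $\sigma_{ess}(L_\lambda^k)=[k^2(1-\lambda),\infty)$, and because the delta is attractive ($\alpha>0$) the appendix Lemma~\ref{eigenvalue_condition} (specialized to \eqref{ev_cond_p1}) shows there is exactly one eigenvalue below the essential spectrum, located at $\tilde\mu_k=k^2(1-\lambda)-\tfrac14 k^4\alpha^2$. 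Inserting $\alpha=2\sqrt{1-\lambda_*}/k_*$ from \eqref{eq:bifurcation_condition} turns this into $\tilde\mu_k=k^2\bigl[(1-\lambda)-\tfrac{k^2}{k_*^2}(1-\lambda_*)\bigr]$, so the spectrum is fully accounted for.

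First I would fix the interval. Choosing $I_{\lambda_*}=(\lambda_*-\delta,\lambda_*+\delta)$ (intersected with the interval from Lemma~\ref{prop:A1}) with $\delta\leq\tfrac12(1-\lambda_*)$ guarantees $\tfrac12(1-\lambda_*)\leq 1-\lambda\leq\tfrac32(1-\lambda_*)$ for $\lambda\in I_{\lambda_*}$, so the essential spectrum starts at $\geq\tfrac12(1-\lambda_*)k^2$. For $k\geq 3k_*$ one has $k^2/k_*^2\geq 9$, hence $(1-\lambda)-\tfrac{k^2}{k_*^2}(1-\lambda_*)\leq\bigl(\tfrac32-9\bigr)(1-\lambda_*)<-7(1-\lambda_*)$ and therefore $\tilde\mu_k\leq-7(1-\lambda_*)k^2$. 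Consequently $\dist(0,\sigma(L_\lambda^k))\geq ck^2$ with $c:=\tfrac12(1-\lambda_*)$, which is exactly the claimed gap $(-ck^2,ck^2)\subset\rho(L_\lambda^k)$. Since $L_\lambda^k$ is self-adjoint, the spectral theorem gives $\|(L_\lambda^k)^{-1}\|_{L^2\to L^2}\leq(ck^2)^{-1}$, i.e.\ the first estimate $\|\phi\|_{L^2(\R)}\lesssim k^{-2}\|f\|_{L^2(\R)}$.

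For the derivative estimate I would test against $\phi$. Using the quadratic form of the delta operator, $\langle L_\lambda^k\phi,\phi\rangle=\|\phi'\|_{L^2}^2+\kappa^2\|\phi\|_{L^2}^2-k^2\alpha|\phi(0)|^2=\langle f,\phi\rangle$, so dropping the nonnegative term $\kappa^2\|\phi\|^2$,
\[
\|\phi'\|_{L^2}^2\leq\|f\|_{L^2}\|\phi\|_{L^2}+k^2\alpha|\phi(0)|^2.
\]
The first summand is already $\lesssim k^{-2}\|f\|^2$. The crux is a sharp bound on $|\phi(0)|$, and here I expect the main obstacle: the naive trace inequality $|\phi(0)|^2\leq\|\phi\|_{L^2}\|\phi'\|_{L^2}$ only closes to $\|\phi'\|\lesssim\|f\|$, losing the required factor $k^{-1}$. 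To get the sharp bound I would use the Green's function $G(y,y')=\tfrac{1}{2\kappa}e^{-\kappa|y-y'|}$ of the free part $L_{0,\lambda}^k=-\frac{d^2}{dy^2}+\kappa^2$. Rewriting $L_\lambda^k\phi=f$ as $L_{0,\lambda}^k\phi=f+k^2\alpha\phi(0)\delta_0$ gives $\phi=g+k^2\alpha\phi(0)\,G(\cdot,0)$ with $g:=(L_{0,\lambda}^k)^{-1}f$; evaluating at $y=0$ yields $\phi(0)=g(0)/D$ with $D=1-\tfrac{k\alpha}{2\sqrt{1-\lambda}}$.

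The decisive point is that the ``resonance denominator'' satisfies $|D|\gtrsim k$. Indeed $\tfrac{k\alpha}{2\sqrt{1-\lambda}}=\tfrac{k}{k_*}\cdot\tfrac{\sqrt{1-\lambda_*}}{\sqrt{1-\lambda}}\geq 2$ for $k\geq 3k_*$ once $I_{\lambda_*}$ is shrunk so that $\sqrt{1-\lambda_*}/\sqrt{1-\lambda}\geq\tfrac23$, whence $|D|=\tfrac{k\alpha}{2\sqrt{1-\lambda}}-1\geq\tfrac{k}{3k_*}$. Combined with $|g(0)|=\bigl|\tfrac{1}{2\kappa}\int_\R e^{-\kappa|y'|}f(y')\,dy'\bigr|\leq\tfrac{1}{2\kappa}\|e^{-\kappa|\cdot|}\|_{L^2}\|f\|=\tfrac12\kappa^{-3/2}\|f\|\lesssim k^{-3/2}\|f\|$, this produces $|\phi(0)|\lesssim k^{-5/2}\|f\|$, so that $k^2\alpha|\phi(0)|^2\lesssim k^{-3}\|f\|^2$. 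Substituting back gives $\|\phi'\|_{L^2}^2\lesssim k^{-2}\|f\|^2$, i.e.\ the second estimate. This is the same large-denominator mechanism that produced the gap: keeping $k$ bounded away from the bifurcation wavenumber $k_*$ makes both the bound-state energy $\tilde\mu_k$ and the denominator $D$ of order $k^2$ and $k$ respectively, and it is precisely the restriction $k\geq 3k_*$ that supplies the needed margins.
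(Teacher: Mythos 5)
Your proof is correct, but it takes a genuinely different route from the paper's. The paper proves the lemma by a direct energy estimate: it expands $\|L_\lambda^k\phi\|_{L^2(\R)}^2$ on the two half-lines, integrates by parts, rewrites the boundary terms via the jump condition $\phi'(0_+)-\phi'(0_-)=-k^2\alpha\phi(0)$, and absorbs the resulting trace term by Young's inequality with the specific choice $\varepsilon=\frac{8}{\alpha}(1-\lambda)$, arriving at a lower bound of the form $\|L_\lambda^k\phi\|_{L^2}^2\gtrsim \|\phi''\|_{L^2(\R\setminus\{0\})}^2+k^2\|\phi'\|_{L^2}^2+k^4\|\phi\|_{L^2}^2$ for $k\geq 3k_*$ and $\lambda$ near $\lambda_*$; both resolvent estimates and the spectral gap then follow simultaneously, with no spectral computation at all. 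You instead exploit the exact solvability of the delta-on-constant-background model: the spectrum of $-\frac{d^2}{dy^2}+k^2(1-\lambda)-k^2\alpha\delta_0$ is $[k^2(1-\lambda),\infty)$ together with the single bound state at $k^2(1-\lambda)-\frac{1}{4}k^4\alpha^2$, which for $k\geq 3k_*$ and $\delta\leq\frac12(1-\lambda_*)$ sits below $-7(1-\lambda_*)k^2$; the $L^2$ bound is then immediate from self-adjointness and the spectral theorem, and the $H^1$ bound follows from the quadratic form identity plus your sharp trace bound $|\phi(0)|\lesssim k^{-5/2}\|f\|_{L^2}$, obtained from the free Green's function $\frac{1}{2\kappa}e^{-\kappa|y|}$ and the resonance-denominator estimate $\bigl|1-\tfrac{k\alpha}{2\sqrt{1-\lambda}}\bigr|\geq \tfrac{k}{3k_*}$. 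I checked the details: the eigenvalue location, the reduction of Lemma~\ref{eigenvalue_condition} to \eqref{ev_cond_p1} covering all eigenvalues below the essential spectrum, the denominator bound, the estimate $|g(0)|\leq\frac12\kappa^{-3/2}\|f\|_{L^2}$, and the legitimacy of dropping the nonnegative term $\kappa^2\|\phi\|_{L^2}^2$ in the form identity are all in order, as is your correct identification that the naive trace inequality would lose the factor $k^{-1}$. As for what each approach buys: the paper's energy method needs no explicit model and additionally controls $\|\phi''\|$, which makes it robust enough to be reused in case (P2) — Part~2 of the proof of Lemma~\ref{prop:A2_caseP2} quotes precisely the $L^2\to H^1$ bound established here; your argument is tied to the exactly solvable structure of (P1), but in exchange yields the exact location of the bound state, an explicit gap constant $c=\frac12(1-\lambda_*)$, and makes transparent that the hypothesis $k\geq 3k_*$ enters twice, once to push the bound state far below zero and once to keep the resonance denominator of size $k/k_*$.
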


\begin{proof} We show that for any $\lambda<1$ we have
	\begin{align*}
		\|L_\lambda^{k}\phi\|_{L^2(\R)}^2 \geq & \frac{1}{2}\left(\|\phi^{\prime\prime}\|_{L^2(-\infty,0)}^2+\|\phi^{\prime\prime}\|_{L^2(0,\infty)}^2\right)\\
		& +2(1-\lambda)\left(k^2-4k_*^2-\frac{16(\lambda_*-\lambda)}{\alpha^2}\right)\|\phi^{\prime}\|_{L^2(\R)}^2+\left(\frac{k_*^2\alpha^2}{4}+\lambda_*-\lambda\right)^2k^4\|\phi\|_{L^2(\R)}^2,
	\end{align*}		
	which proves the assertion. We have that
	\begin{align*}
		\|L_\lambda^{k}\phi\|_{L^2(\R)}^2 =  \int_{-\infty}^{0}(L_\lambda^{k}\phi)^2\, dy  +  \int_{0}^{\infty}(L_\lambda^{k}\phi)^2\, dy.
	\end{align*}
	For the first integral on the right hand side, we compute 
	\begin{align*}
		\int_{-\infty}^{0}(L_\lambda^{k}\phi)^2\, dy  &= \int_{-\infty}^{0}(-\phi^{\prime\prime}+k^2(1-\lambda)\phi)^2\,dy\\
		&= \int_{-\infty}^{0}|\phi^{\prime\prime}|^2 - 2k^2(1-\lambda)\phi^{\prime\prime}\phi +k^4(1-\lambda)^2\phi^2\,dy\\
		&=\|\phi^{\prime\prime}\|_{L^2(-\infty,0)}^2+2k^2(1-\lambda)\|\phi^{\prime}\|_{L^2(-\infty,0)}^2+k^4(1-\lambda)^2\|\phi\|_{L^2(-\infty,0)}^2 -2k^2(1-\lambda)\phi^\prime(0_-)\phi(0),
	\end{align*}
	where we used integration by parts. Similarly, we obtain that
	\begin{align*}
		\int_{0}^{\infty}(L_\lambda^{k}\phi)^2\, dy  &=\|\phi^{\prime\prime}\|_{L^2(0,\infty)}^2+2k^2(1-\lambda)\|\phi^{\prime}\|_{L^2(0,\infty)}^2+k^4(1-\lambda)^2\|\phi\|_{L^2(0,\infty)}^2+2k^2(1-\lambda)\phi^\prime(0_+)\phi(0).
	\end{align*}
	Taking the sum of the two integrals and using for $\phi\in D(L^k_\lambda)$ that 
	\begin{equation*}\label{eq:boundary}
		\phi^\prime(0_+)-\phi^\prime(0_-)=-k^2\alpha\phi(0)
	\end{equation*}
	we find that
	\begin{align}	\label{eq:ungleichung}
		\begin{split}
			\| L_\lambda^{k}\phi\|_{L^2(\R)}^2 = & \|\phi^{\prime\prime}\|_{L^2(-\infty,0)}^2+\|\phi^{\prime\prime}\|_{L^2(0,\infty)}^2+2k^2(1-\lambda)\|\phi^{\prime}\|_{L^2(\R)}^2+k^4(1-\lambda)^2\|\phi\|_{L^2(\R)}^2  \\
			& -\frac{2}{\alpha}(1-\lambda)\left(\phi^\prime(0_+)-\phi^\prime(0_-)\right)^2.
		\end{split}
	\end{align}
	A simple computation together with Young's inequality implies that 
	\[
	|\phi'(0_+)|^2\leq 2\int_0^\infty |\phi'\phi''|\,dy \leq 
	\left(\e \|\phi^\prime\|_{L^2(0,\infty)}^2+\frac{1}{\e}\|\phi^{\prime\prime}\|_{L^2(0,\infty)}^2 \right)
	\]
	for any $\e>0$. A similar estimate holds for $|\phi'(0_-)|^2$. Therefore
	\begin{align*}
		\left|\phi^\prime(0_+)-\phi^\prime(0_-)\right|^2 & \leq 2\left(|\phi^\prime(0_+)|^2+|\phi^\prime(0_-)|^2\right) \\
		& \leq 2\left(\e \|\phi^\prime\|_{L^2(\R)}^2+\frac{1}{\e}\|\phi^{\prime\prime}\|_{L^2((-\infty,0))}^2+\frac{1}{\e}\|\phi^{\prime\prime}\|_{L^2((0,\infty))}^2\right).
	\end{align*}
	Inserting the latter into \eqref{eq:ungleichung} yields 
	\begin{align*}
		\| L_\lambda^{k}\phi\|_{L^2(\R)}^2 \geq & \left(1-\frac{4(1-\lambda)}{\alpha \e}\right) (\|\phi^{\prime\prime}\|_{L^2(-\infty,0)}^2+\|\phi^{\prime\prime}\|_{L^2(0,\infty)}^2) \\
		& +2(1-\lambda)(k^2-\frac{2\e}{\alpha})\|\phi^{\prime}\|_{L^2(\R)}^2+k^4(1-\lambda)^2\|\phi\|_{L^2(\R)}^2.
	\end{align*}
	The choice $\e = \frac{8}{\alpha}(1-\lambda)=2\alpha k_*^2 + \frac{8(\lambda_*-\lambda)}{\alpha}$ implies the claim.
\end{proof}

Collecting Lemma \ref{prop:A1} and Lemma \ref{prop:A2}, we infer that there exists an open interval $I_{\lambda_*}\subset \R_+$ containing $\lambda_*$ such that conditions $(L0)$--$(L3)$ are satisfied, which concludes the proof of Corollary \ref{cor:p1}. The formulas for $\dot \lambda(0)$ and $\ddot \lambda(0)$ follow directly from Proposition \ref{prop:bf}.

\bigskip

\subsection{(P2) $V$ a $\delta$-potential on a step background}
\label{Ss:P2}

 Here we take $V_0(y)=\textbf{1}_{|y|\geq b}$, $V_1(y)=\beta  \textbf{1}_{|y|<b}+ \alpha \delta_0(y)$ and $V(\lambda,y)=\lambda \textbf{1}_{|y|\geq b}+ \beta  \textbf{1}_{|y|<b}+ \alpha\delta_0(y)$ with $\alpha>0$, $\beta,\lambda<1$. The subsequent two results verify that the family of linear operators $L_\lambda^k$ satisfies also the assumptions $(L2)$ and $(L3)$ in Theorem~\ref{main1}. They are the counterparts to Lemma~\ref{prop:A1} and Lemma~\ref{prop:A2}.

\begin{lem}\label{prop:A1_caseP2}
Let us fix a wavenumber $k_*\in \N$ and let $\lambda_*<1$. We determine $\alpha>0$ such that
\begin{equation}\label{eq:bifurcation_condition_caseP2}
\frac{k_*\alpha}{2\sqrt{1-\beta}} = \frac{\sqrt{1-\beta}\sinh(k_*\sqrt{1-\beta}b)+\sqrt{1-\lambda_*}\cosh(k_*\sqrt{1-\beta}b)}{\sqrt{1-\beta}\cosh(k_*\sqrt{1-\beta}b)+\sqrt{1-\lambda_*}\sinh(k_*\sqrt{1-\beta}b)}.
\end{equation}
Then there exists an open interval $I_{\lambda_*}\subset \R$ containing $\lambda_*$ such that
	\[
	\dim \ker L_{\lambda_*}^{k_*}=1,
	\]
	and $\ker L_{\lambda}^k=\{0\}$ for any $(k,\lambda)\in \N\times I_{\lambda_*}$ with $(k,\lambda)\neq (k_*,\lambda_*)$.
\end{lem}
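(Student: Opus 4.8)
The plan is to characterize the zero eigenvalue of $L_\lambda^k$ through the eigenvalue condition \eqref{this_makes_an_eigenvalue} specialized to the value $\mu=0$ (so that $\tilde\lambda=\lambda$, $\tilde\beta=\beta$), and then to show that, with $\alpha$ fixed by \eqref{eq:bifurcation_condition_caseP2}, this condition singles out the pair $(k_*,\lambda_*)$. Writing $p=\sqrt{1-\lambda}$, $q=\sqrt{1-\beta}$ and $p_*=\sqrt{1-\lambda_*}$, the statement that $0$ is an eigenvalue of $L_\lambda^k$ reads $\tfrac{k\alpha}{2q}=R(k,p)$ with
\[
R(k,p)=\frac{q\sinh(kqb)+p\cosh(kqb)}{q\cosh(kqb)+p\sinh(kqb)},
\]
and \eqref{eq:bifurcation_condition_caseP2} is precisely the identity $\tfrac{k_*\alpha}{2q}=R(k_*,p_*)$. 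Since $\lambda_*<1$, Lemma~\ref{lem:spectrum} places $\sigma_{ess}(L_{\lambda_*}^{k_*})=[k_*^2(1-\lambda_*),\infty)$ strictly to the right of $0$, so $0$ is an isolated eigenvalue; together with the one-dimensionality of the corresponding eigenspace provided by Lemma~\ref{eigenvalue_condition} this already gives $\dim\ker L_{\lambda_*}^{k_*}=1$.

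The core of the argument is to exclude any further solution of $\tfrac{k\alpha}{2q}=R(k,p)$ for $(k,\lambda)\in\N\times I_{\lambda_*}$. I would first fix an integer $k$ and treat the defect
\[
h_k(\lambda):=\frac{k\alpha}{2q}-R\bigl(k,\sqrt{1-\lambda}\bigr)
\]
as a function of $\lambda$. A direct differentiation gives $\partial_p R=q\,(q\cosh(kqb)+p\sinh(kqb))^{-2}>0$, and since $p=\sqrt{1-\lambda}$ is strictly decreasing in $\lambda$, the map $\lambda\mapsto h_k(\lambda)$ is strictly increasing. Hence $h_k$ has at most one zero; in particular $\lambda_*$ is the unique zero of $h_{k_*}$.

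It then remains to see that no integer $k\neq k_*$ produces a zero near $\lambda_*$. For this I examine $g(k):=h_k(\lambda_*)=\tfrac{k\alpha}{2q}-R(k,p_*)$ as a function of the continuous variable $k\ge0$. The key computation, carried out along $t=kqb$, is $\partial_t R=(q^2-p_*^2)\,(q\cosh t+p_*\sinh t)^{-2}$, whose sign equals that of $\lambda_*-\beta$. If $\lambda_*\le\beta$ this derivative is $\le 0$, whence $g'(k)\ge\tfrac{\alpha}{2q}>0$ and $g$ is strictly increasing. If $\lambda_*>\beta$ then $R(\cdot,p_*)$ is increasing and concave in $k$ (the denominator above grows in $t$), so $g$ is convex with $g(0)=-p_*/q<0$ and $g(k)\to+\infty$, and a convex function with these boundary values has exactly one zero on $(0,\infty)$. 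In either case the unique zero is $k_*$, so $g(k)\neq 0$ for every integer $k\neq k_*$.

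Finally I would assemble the interval. For $k$ large, $\tfrac{k\alpha}{2q}$ exceeds the bound $\max(1,p/q)$, which controls $R$ uniformly for $\lambda$ in a compact neighborhood of $\lambda_*$, so $h_k>0$ there and no zero occurs. For the finitely many remaining $k\neq k_*$ the values $h_k(\lambda_*)=g(k)$ are nonzero, so by continuity in $\lambda$ the interval $I_{\lambda_*}$ can be shrunk (a finite intersection), while keeping $I_{\lambda_*}\subset(-\infty,1)$, so that each such $h_k$ stays away from $0$ on $I_{\lambda_*}$. This shows that the eigenvalue condition holds on $\N\times I_{\lambda_*}$ only at $(k_*,\lambda_*)$, that is $\ker L_\lambda^k=\{0\}$ for all other pairs, together with $\dim\ker L_{\lambda_*}^{k_*}=1$. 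I expect the main obstacle to be the global uniqueness of the zero of $g$ in the variable $k$, which rests entirely on the sign-and-convexity analysis made possible by the clean closed form for $\partial_t R$; the monotonicity of $h_k$ in $\lambda$ is comparatively routine.
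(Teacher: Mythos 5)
Your proposal is correct and takes essentially the same route as the paper: both reduce the kernel question to the scalar eigenvalue condition of Lemma~\ref{eigenvalue_condition} with $\mu=0$, and then show that, once $\alpha$ is fixed by \eqref{eq:bifurcation_condition_caseP2}, the pair $(k_*,\lambda_*)$ is the only solution in $\N\times I_{\lambda_*}$, via a large-$k$ bound, a monotonicity argument, and a final shrinking of the interval over the finitely many remaining integers. The only difference is one of detail: where the paper appeals to a ``standard calculation'' (monotone decrease in $k$ of the right-hand side divided by $k$), you carry it out explicitly through the sign of $q^2-p_*^2=\lambda_*-\beta$ and the ensuing monotonicity/convexity analysis of $g$, and your strict monotonicity of $h_k$ in $\lambda$ also makes explicit why no $\lambda\neq\lambda_*$ can pair with $k=k_*$ --- a point the paper's continuity-plus-integrality argument leaves implicit.
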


\begin{proof} As before we are considering the zero-eigenvalue of $L_{\lambda}^{k}$. Hence we have $\mu=0$ and $\tilde\lambda=\lambda$, $\tilde\beta=\beta$. Then \eqref{eq:bifurcation_condition_caseP2} amounts to $L_{\lambda_*}^{k_*}$ having a simple zero eigenvalue, cf. Lemma~\ref{eigenvalue_condition}. It remains to show that for no other value of $\lambda\in I_{\lambda_*}$ and $k\in \N$ there is a zero eigenvalue of $L_\lambda^k$. First note that for $\lambda$ in a bounded interval of $(-\infty,1)$ there are only finitely many values of $k\in\{1,\ldots,K\}$ which potentially also fulfill \eqref{eq:bifurcation_condition_caseP2} since the right-hand side is bounded in $k$ and the left-hand side tends to infinity as $k\to \infty$. Now we observe (by a standard calculation) that for fixed $\lambda=\lambda_*$, the right-hand side of \eqref{eq:bifurcation_condition_caseP2} divided by $k$ is monotone decreasing in $k$. Hence, for given $\lambda_*$ no other value of $k\in\{1,\ldots,K\}$ than $k_*$ fulfills \eqref{eq:bifurcation_condition_caseP2}. Finally, since $k\in\{1,\ldots,K\}$ needs to be integer valued, we can find a sufficiently small open interval $I_{\lambda_*}\subset (-\infty,1)$ containing $\lambda_*$ such that \eqref{eq:bifurcation_condition_caseP2} is fulfilled for $(\lambda,k)\in I_{\lambda_*}\times\N$ if and only if $(\lambda,k)=(\lambda_*,k_*)$. 
\end{proof}

\begin{lem}\label{prop:A2_caseP2}
	There exists an open interval $I_{\lambda_*}\subset \R$ containing $\lambda_*$ such that the following holds for all sufficiently large $k\in\N$ and all $\lambda\in I_{\lambda_*}$:
	if $L_\lambda^k \phi =f$ for some $f\in L^2(\R)$, then
	\begin{equation}\label{eq:Est_caseP2}
		\|\phi\|_{L^2(\R)}\lesssim \frac{1}{k^2}\|f\|_{L^2(\R)}\qquad\mbox{and}\qquad	\|\phi^\prime \|_{L^2(\R)}\lesssim \frac{1}{k}\|f\|_{L^2(\R)}.
	\end{equation}
	In particular, there exists a constant $c=c(k_*,|I_{\lambda_*}|)$, depending on $k_*$ and the size of the interval $I_{\lambda_*}$, such that 
	\begin{equation*}\label{eq:gap_P2}
		(-ck^2,ck^2)\subset \rho (L_\lambda^k) \quad \mbox{ for every} \quad k \mbox{ sufficiently large}, \lambda\in I_{\lambda_*}.
	\end{equation*} 
\end{lem}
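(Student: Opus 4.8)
The plan is to follow the energy method from the proof of Lemma~\ref{prop:A2}, i.e.\ to bound $\|L_\lambda^k\phi\|_{L^2(\R)}^2$ from below by a positive combination of $\|\phi\|_{L^2(\R)}^2$, $\|\phi'\|_{L^2(\R)}^2$ and $\|\phi''\|_{L^2(\R)}^2$ with the correct powers of $k$, from which \eqref{eq:Est_caseP2} is immediate. Writing $m:=1-\lambda$ and $n:=1-\beta$ (both positive), the potential coefficient of $L_\lambda^k$ equals $k^2m$ on $\{|y|>b\}$ and $k^2n$ on $\{|y|<b\}$, with the $\delta$ at $0$. I would split $\R$ into the four intervals $(-\infty,-b)$, $(-b,0)$, $(0,b)$, $(b,\infty)$, expand $(L_\lambda^k\phi)^2=(\phi'')^2-2k^2c\,\phi''\phi+k^4c^2\phi^2$ on each piece ($c\in\{m,n\}$ the local coefficient) and integrate by parts. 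Since $\phi\in H^2$ on each half-line, $\phi$ and $\phi'$ are continuous across $\pm b$ and decay at $\pm\infty$, while at $0$ one uses the jump relation $\phi'(0_+)-\phi'(0_-)=-k^2\alpha\phi(0)$ built into $D(L_\lambda^k)$.

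Collecting terms, the interior integrals produce the good quantity $\|\phi''\|^2+2k^2(\cdots)\|\phi'\|^2+k^4(\cdots)\|\phi\|^2$ (coefficients $m$ or $n$ on the respective pieces), the node at $0$ contributes $-2k^4n\alpha\,\phi(0)^2$ exactly as in Lemma~\ref{prop:A2}, and --- this is the genuinely new feature absent in case (P1) --- the coefficient jump at $\pm b$ leaves the boundary terms $2k^2(m-n)\phi'(b)\phi(b)$ and $-2k^2(m-n)\phi'(-b)\phi(-b)$. These are the main obstacle: they carry a full factor $k^2$, have no definite sign, and $|m-n|=|\beta-\lambda|$ need not be small, so a crude trace/Young estimate of $\phi'(\pm b)\phi(\pm b)$ would cost a factor $k^2$ times $\|\phi''\|^2$ and could not be absorbed.

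The key idea to defeat this is to re-express the $\pm b$ boundary terms through the one-sided energy identities on the outer intervals. On $(b,\infty)$, where $L_\lambda^k\phi=-\phi''+k^2m\phi=f$, integration by parts gives the exact identity $\|f\|_{L^2(b,\infty)}^2=E_++2k^2m\,\phi'(b)\phi(b)$ with $E_+:=\|\phi''\|_{L^2(b,\infty)}^2+2k^2m\|\phi'\|_{L^2(b,\infty)}^2+k^4m^2\|\phi\|_{L^2(b,\infty)}^2\ge0$, and symmetrically $E_-\ge0$ on $(-\infty,-b)$. Hence $2k^2(m-n)[\phi'(b)\phi(b)-\phi'(-b)\phi(-b)]=\tfrac{m-n}{m}\bigl(\|f\|_{\mathrm{out}}^2-E_+-E_-\bigr)$, where $\|f\|_{\mathrm{out}}^2:=\|f\|_{L^2(\{|y|>b\})}^2$; substituting this back the outer energies recombine with the \emph{positive} coefficient $1-\tfrac{m-n}{m}=\tfrac{n}{m}>0$. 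The upshot is the clean identity
\[
\|f\|_{\mathrm{in}}^2+\tfrac{n}{m}\|f\|_{\mathrm{out}}^2 = G - 2k^4 n\alpha\,\phi(0)^2,\qquad \|f\|_{\mathrm{in}}^2:=\|f\|_{L^2(-b,b)}^2,
\]
where $G:=E_{\mathrm{in}}+\tfrac{n}{m}(E_++E_-)$ is a strictly positive energy dominating $k^4\|\phi\|^2$, $k^2\|\phi'\|^2$ and $\|\phi''\|^2$ up to constants depending only on $m,n$, and the left-hand side is $\lesssim\|f\|_{L^2(\R)}^2$. Crucially this holds for either sign of $m-n$ and for arbitrary $|\beta-\lambda|$.

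It then remains only to absorb the node term. As in Lemma~\ref{prop:A2} I would rewrite $2k^4 n\alpha\,\phi(0)^2=\tfrac{2n}{\alpha}(\phi'(0_+)-\phi'(0_-))^2$ via the jump relation and estimate $(\phi'(0_\pm))^2\le2\|\phi'\|_{L^2(\R)}\|\phi''\|_{L^2(\R)}$ by the half-line trace inequality; since $G\gtrsim k^2\|\phi'\|^2$ and $G\gtrsim\|\phi''\|^2$, this term is bounded by $\tfrac{C}{k}G$ with $C=C(\alpha,\beta,\lambda_*)$. For $k$ so large that $\tfrac{C}{k}\le\tfrac12$, the identity yields $G\le2\bigl(\|f\|_{\mathrm{in}}^2+\tfrac{n}{m}\|f\|_{\mathrm{out}}^2\bigr)\lesssim\|f\|_{L^2(\R)}^2$, and reading off the $k^4\|\phi\|^2$ and $k^2\|\phi'\|^2$ parts of $G$ gives \eqref{eq:Est_caseP2}. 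All constants depend only on $\alpha$, $\beta$ and on $m=1-\lambda$, which stays bounded away from $0$ and $\infty$ for $\lambda$ in a small interval $I_{\lambda_*}$ around $\lambda_*<1$, so the estimates are uniform in $\lambda\in I_{\lambda_*}$, and the spectral-gap conclusion follows as before. Thus the whole difficulty is localized in the $\pm b$ interface terms, which the outer-energy identity resolves without any smallness assumption on $|\beta-\lambda|$.
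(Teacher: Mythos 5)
Your proof is correct, but it takes a genuinely different route from the paper's. You extend the quadratic-form/integration-by-parts argument of Lemma~\ref{prop:A2} directly to the step potential: the exact identity
\begin{equation*}
\|f\|_{L^2(-b,b)}^2+\tfrac{1-\beta}{1-\lambda}\,\|f\|_{L^2(\{|y|>b\})}^2
= E_{\mathrm{in}}+\tfrac{1-\beta}{1-\lambda}\,(E_++E_-)-2k^4(1-\beta)\alpha\,\phi(0)^2
\end{equation*}
is indeed what one gets after eliminating the interface terms $2k^2(m-n)\bigl[\phi'(b)\phi(b)-\phi'(-b)\phi(-b)\bigr]$ via the one-sided energy identities on $(b,\infty)$ and $(-\infty,-b)$, and since $n/m>0$ this works for either sign and any size of $\beta-\lambda$; the node term is then absorbed for large $k$ exactly as in the (P1) case, and self-adjointness converts the resulting lower bound $\|L_\lambda^k\phi\|\gtrsim k^2\|\phi\|$ into the spectral gap. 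The paper argues quite differently: in Part 1 it obtains the gap $[-\mu_0k^2,\mu_0k^2]\subset\rho(L_\lambda^k)$ not by energy estimates but from the explicit transcendental eigenvalue condition \eqref{this_makes_an_eigenvalue} of Lemma~\ref{eigenvalue_condition} (for eigenvalues of size $O(\mu_0 k^2)$ the left-hand side grows like $k$ while the right-hand side stays bounded), together with $\sigma_{ess}(L_\lambda^k)=[k^2(1-\lambda),\infty)$, which immediately gives $\|\phi\|_{L^2}\lesssim k^{-2}\|f\|_{L^2}$ by self-adjointness; in Part 2 it gets the derivative bound by a perturbation trick, writing the (P2) operator as the (P1) operator plus the bounded potential $k^2(\lambda-\beta)\textbf{1}_{|y|<b}$ and invoking the $L^2\to H^1$ resolvent bound $\lesssim 1/k$ from Lemma~\ref{prop:A2}. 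The paper's route is shorter because it reuses the appendix ODE analysis and the (P1) lemma, but it is tied to the explicit solvability of the piecewise-constant problem; your route is longer but self-contained, purely variational, never needs the explicit eigenvalue condition, and makes transparent that no smallness of $|\beta-\lambda|$ is required -- the only structural input is that the coefficient is constant on the outer intervals, so that the boundary terms at $\pm b$ can be solved for exactly.
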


\begin{proof} The proof consists of two parts. First we determine an interval $(-ck^2,ck^2)\subset \rho(L_\lambda^k)$ for all $\lambda\in I_{\lambda_*}$ and all sufficiently large $k$. This implies the first part of the estimate in \eqref{eq:Est_caseP2}. In the second part of the proof we will show the remaining part of \eqref{eq:Est_caseP2}.

\emph{Part 1:} Recall that $\sigma_{ess}(L_\lambda^k)=[k^2(1-\lambda),\infty)$, which is consistent with the desired result provided we choose $I_{\lambda_*}$ in such a way that it has a positive distance from $1$. Subject to this observation we take the bounded interval $I_{\lambda_*}$ from Lemma~\ref{prop:A1_caseP2} and diminish it in the following if necessary. Notice that \eqref{this_makes_an_eigenvalue} describes all eigenvalues of $L_{\lambda}^k$ of the form $\tilde\mu=k^2\mu$, where $\mu$ is so small that $\sup_{\lambda\in I_{\lambda_*}}\{\tilde\lambda=\lambda+\mu\}<1$ and $\tilde\beta=\beta+\mu<1$. Now observe that uniformly for $\lambda\in I_{\lambda_*}$ and $\mu\in [-\mu_0,\mu_0]$ for small $\mu_0>0$ the left-hand side of \eqref{this_makes_an_eigenvalue} tends to $\infty$ as $k\to \infty$ whereas the right-hand side stays bounded in $k$. Therefore the set $[-\mu_0 k^2,\mu_0 k^2]$ belongs to the resolvent of $L_\lambda^k$ for all $\lambda\in I_{\lambda_*}$ and all sufficiently large $k$.

\emph{Part 2:} We need to distinguish the operator $L_{\lambda}^k=-\frac{d^2}{dy^2}+k^2(1-\lambda-\alpha\delta_0)$ of case (P1) from its counterpart in (P2). Within this part of the proof let us denote it by $L_{\lambda,\beta}^k=-\frac{d^2}{dy^2}+k^2(1-\lambda \textbf{1}_{|y|\geq b}- \beta  \textbf{1}_{|y|<b}- \alpha\delta_0)$. Using Part 1 we find
$$
\|(L_{\lambda,\beta}^k)^{-1}\|_{L^2\to L^2} \leq \frac{1}{\dist(0,\sigma(L_{\lambda,\beta}^k)} \lesssim \frac{1}{k^2}.
$$
Therefore, with $f\in L^2(\R)$ and $\phi$ as in the hypothesis of the lemma, we get $\|\phi\|_{L^2(\R)} \lesssim \frac{1}{k^2} \|f\|_{L^2(\R)}$. The estimate for $\|\phi'\|_{L^2(\R)}$ is obtained as follows. We have 
$$
L_{\lambda,\beta}^k \phi = L_\lambda^k \phi + k^2(-\beta+\lambda)\textbf{1}_{|y|< b}\phi = f
$$
from which we deduce by using $\|(L_\lambda^k)^{-1}\|_{L^2\to H^1}\lesssim \frac{1}{k}$ for $k\gg 1$ from Lemma~\ref{prop:A2}
$$
\|\phi'\|_{L^2(\R)} \leq \|(L_\lambda^k)^{-1}\|_{L^2\to H^1} \|f-k^2(-\beta+\lambda)\textbf{1}_{|y|< b}\phi\|_{L^2(\R)} \lesssim \frac{1}{k}(\|f\|_{L^2(\R)}+k^2\|\phi\|_{L^2(\R)}) \lesssim \frac{1}{k} \|f\|_{L^2(\R)} 
$$
where in the last step we have used the result from Part 1. The finishes the proof of the lemma.
\end{proof}

Due to Lemma~\ref{prop:A1_caseP2} and Lemma~\ref{prop:A2_caseP2} conditions $(L0)$--$(L3)$ are satisfied. This concludes the proof of Corollary~\ref{cor:p2}.

\bigskip
 
 \section{Existence of  traveling waves when $\Gamma$ is a delta potential}
 \label{S:Gamma_d}

Subject of this section is the proof of Theorem \ref{main2} when $\Gamma=\gamma \delta_0$ is given by a multiple of a delta potential and
 \[
 V(\lambda,y)=\lambda V_0(y)+\underbrace{W(y)+\alpha \delta_0(y)}_{=V_1(y)},
 \]
 where $V_0,W\in L^\infty(\R)$ are even.
The equation for traveling wave solutions \eqref{eq:Q_trav} is then given by
 \begin{equation}\label{eq:traveling_wave}
	 -\Phi_{yy}-(1-\lambda V_0(y)-W(y)-\alpha\delta_0(y))\Phi_{xx}+\gamma\delta_0(y)\left(\Phi^3\right)_{xx}=0
 \end{equation}
 and can be written as a linear partial differential equation on $\T\times \R\setminus\{0\}$ equipped with a nonlinear boundary condition on $\T$:
  \begin{align}
  -\Phi_{yy}-(1-\lambda V_0-W(y))\Phi_{xx}&=0, &&\quad (x,y)\in\T \times \R\setminus\{0\}, \label{eq:gleichung}\\
  \Phi_y(x,0_+)-\Phi_y(x,0 _-)&=\partial_x^2\left(\alpha \Phi+ \gamma\Phi^3\right)(x,0), &&\quad x\in\T. \label{eq:rb}
  \end{align}
  
  In what follows let us assume that $\Phi$ is even with respect to $y$. We seek for solutions $\Phi$ of the form
  \begin{equation}
  \Phi(x,y)=\sum_{k\in \N}a_k\phi_k(y;\lambda)\sin(kx), \label{ansatz}
  \end{equation}
  where $\phi_k(\cdot;\lambda)\in H^1(\R)\cap H^2(\R\setminus\{0\})$ is an evenly extended solution to the linear problem
  \begin{equation}\label{eq:linear}
   L_{0,k}^\lambda \phi_k(y;\lambda)=0 \quad \mbox{ on } (0,\infty) \qquad\mbox{ with } \quad \phi_k(0;\lambda)=1
  \end{equation}
  and
  $$
   L_{0,k}^\lambda := -\frac{d^2}{dy^2}+k^2(1-\lambda V_0(y)-W(y)).
  $$
  Thus ansatz \eqref{ansatz} already solves \eqref{eq:gleichung} and its remains to determine $a=(a_k)_{k\in \N}$ such that \eqref{eq:rb} is also satisfied. It will be convenient to parameterize the sequence $(a_k)$ over $\Z$ instead of $\N$ by setting  $a_k=-a_{-k}$. In this way $\Phi(x,y)=\frac{1}{2}\sum_{k\in \Z}a_k\phi_k(y;\lambda)\sin(kx)$. Here we have defined $\phi_{-k}(\cdot;\lambda):= \phi_k(\cdot;\lambda)$ for $k\in \N$. Then, we shall see that for $s\geq \frac{5}{2}$ the existence of a traveling wave solution $\Phi$ in the space
$$
X_s= H^s(\T;L^2(\R))\cap H^{s-1}(\T;H^1(\R))\cap C(\R;H^{s-\frac{1}{2}}(\T))\cap C^1(\R; H^{s-\frac{3}{2}}(\T))
$$
follows from the existence of a sequence $a\in h_\sharp^s(\R)$ satisfying the boundary condition
   \begin{equation}\label{eq:R}
   2a_k\phi_k^\prime(0_+;\lambda)=-k^2\left(\alpha a_k-\frac{\gamma}{4} (a*a*a)_k\right)\qquad\mbox{for all}\quad k\in \N.
   \end{equation}
   Recall that the $k$-th Fourier coefficient of $\Phi^3(x,0)$ is given by $-\frac{1}{4}(a*a*a)_k$ (cf. Lemma~\ref{A1}). Notics that for $s\geq \frac{5}{2}$ we have the embedding
$$ X_s \hookrightarrow X= H^2(\T;L^2(\R))\cap H^1(\T;H^1(\R))\cap C(\R;H^2(\T))\cap C^1(\R; H^1(\T)).
$$

   \medskip
   
   As in the previous section we aim to apply bifurcation theory with respect to the the parameter $\lambda$ to obtain the existence of nontrivial solutions $a\in h_\sharp^s(\R)$ of \eqref{eq:R} for $s\geq \frac{5}{2}$ by the Crandall--Rabinowitz theorem. Recall that this time we are working under the the assumptions $(\tilde L0)$--$(\tilde L3)$ from Theorem~\ref{main2}.

\begin{rem}\label{rem:existenz_decaying_mode} 
	\emph{
		Existence and properties of the decaying solutions $\phi_k(\cdot;\lambda)$ of \eqref{eq:linear}:
	\begin{itemize} 
		\item[(i)] Due to $(\tilde L1)$ the problem $L_{0,\lambda}^k \phi_k = \textbf{1}_{[-2,-1]}$ on $\R$ has a unique $H^2(\R)$ solution. Its restriction to $[0,\infty)$ satisfies \eqref{eq:linear}. The fact that $\phi_k(y;\lambda)\to 0$ exponentially as $y\to\infty$ can be seen as follows: 
		Since $L_{0,k}^\lambda$ is a self-adjoint operator with $0\in \rho(L_{0,k}^\lambda)$  and the resolvent set is open in $\C$ there exists $c_{k,\lambda}>0$ such that $(-c_{k,\lambda},c_{k,\lambda})\subset \rho(L_{0,k}^\lambda)$.	Set $\psi_k(y;\lambda):=e^{\delta_ky}\phi_k(y;\lambda)$. Then 
	\begin{equation}\label{eq:psi}
	L_{0,k}^\lambda \psi_k(y;\lambda) + B_k \psi_k(y;\lambda) = e^{\delta_ky}\textbf{1}_{[-2,-1]},
	\end{equation}
	where $B_k\psi:=2\delta_k \frac{d}{dy} \psi +\delta_k^2 \psi$. One can show that $B_k$ is $L_{0,k}^\lambda$--bounded in the sense that there exist $a_k,b_k>0$ such that
	\[
	\|B_k\psi\|_{L^2(\R)}^2\leq a_k \|\psi\|_{L^2(\R)}^2 + b_k\|L_{0,k}^\lambda \psi\|_{L^2(\R)}^2\qquad \mbox{for all}\quad \psi \in H^2(\R).
	\]
	In fact, if $b_k >0$ is fixed, then $a_k := \frac{16 \delta_k^4}{b_k}+8\delta_k^2 k^2\|1-\lambda V_0-W\|_\infty+2\delta_k^4$. For fixed $b_k\in (0,1)$ let us choose $\delta_k>0$ so small that
\[
a_k^2 +b_k^2 c_{k,\lambda}^2< c_{k,\lambda}^2.
\]	
Then $(-\tilde c_{k,\lambda}, \tilde c_{k,\lambda}) +i\R \subset \rho(L_{0,k}^\lambda +B_k)$, where $\tilde c_{k,\lambda}=c_k - \sqrt{a_k^2 +b_k^2 c_{k,\lambda}^2}$, cf. \cite[Theorem 2.1 (ii)]{cuenin_tretter}. In particular, $0\in\rho(L_{0,k}^\lambda +B)$ so that there exists a unique solution $\psi_k\in H^2(\R)$ of \eqref{eq:psi}. The boundedness of $\psi_k$ then implies that $|\phi_k(y;\lambda)|\lesssim e^{-\delta_k y}$ decays exponentially on the half-line $[0,\infty)$. This result is also known as ``exponential dichotomy''. Assumption $(\tilde L3)$  may be interpreted as some kind of generalized uniform exponential dichotomy with respect to $k\in \N$ and $\lambda\in I_{\lambda_*}$. 
		\item[(ii)] In the specific examples (P1) and (P2) which we consider at the end of this section, the family of ansatz-functions $(\phi_k(\cdot;\lambda))_{k\in \N}$ satisfies a true uniform exponential dichotomy with respect to $k\in \N$ and $\lambda \in I_{\lambda_*}$; that is, there exists $C,\delta>0$ independent of $k\in \N$ and $\lambda \in I_{\lambda_*}$ such that $|\phi_k(y;\lambda)|\leq Ce^{-\delta y}$ for all $y\geq 0$. This leads to an exponential decay in $y$-direction of the traveling solution $\Phi$ of \eqref{eq:Q_trav} and in particular it implies $(\tilde L3)$.
		\item[(iii)] Notice also that $\phi_k(0;\lambda)\neq0$, since otherwise (by an odd reflection around zero) we would obtain an eigenfunction of $L_{0,k}^\lambda$ for the eigenvalue $0$. This is excluded by assumption $(\tilde L1)$. Likewise we see that $\phi_k'(0;\lambda)\neq 0$ (using an even reflection around zero).
	\end{itemize}
}
\end{rem}

\begin{rem}\label{rem:pos} \emph{
	If $V_0,W$ are bounded, even functions and there exists $\bar v>0$ such that
	 \[
	1-\lambda V_0(y)-W(y) \geq \bar v\qquad \mbox{ for all } \lambda \in I_{\lambda_*}, y\in\R,
		\]
		 then assumption $(\tilde L0), (\tilde L1),$ and $(\tilde L3)$ are satisfied. Clearly, if $1-\lambda V_0-W\geq \bar v$, then $L_{0,k}^\lambda$ is a self-adjoint operator with $\sigma(L_{0,k}^\lambda)\subset [k^2\bar v,\infty)$; thus $0\in \rho(L_{0,k}^\lambda)$ and $(\tilde L1)$ is satisfied. As explained in Remark \ref{rem:existenz_decaying_mode} (i), condition $(\tilde L1)$ implies the existence of a solution $\phi_k(\cdot, \lambda) \in H^2(0,\infty)$ with
	\begin{equation}\label{eq:b}
		-\phi_k^{\prime\prime}+k^2(1-\lambda V_0(y)-W(y))\phi_k=0 \qquad \mbox{on}\quad (0,\infty)
	\end{equation}
and $\phi_k(0;\lambda)=1$.
Multiplying \eqref{eq:b} with $\phi_k$ and integrating over the half line $(0,\infty)$, we obtain that
	\begin{equation}\label{eq:phi1}
		-\phi_k^\prime(0_+;\lambda)= \int_{0}^\infty |\phi_k^\prime|^2\,dy + k^2 \int_0^\infty (1-\lambda V_0-W)\phi_k^2 \,dy.
	\end{equation}
	On the other hand, multiplying \eqref{eq:b} with $\phi_k^\prime$ and integrating over $(0,\infty)$ yields
	\begin{align*}
		(\phi_k^\prime)^2(0_+;\lambda)&= -k^2 \int_{0}^\infty (1-\lambda V_0-W)(\phi_k^2)'\,dy \\
		&=-2k \int_0^\infty k \sqrt{1-\lambda V_0-W}\phi_k \phi_k^\prime \sqrt{1-\lambda V_0-W}\,dy\\
		&\leq k \|\sqrt{1-\lambda V_0-W}\|_\infty \left(\int_{0}^\infty |\phi_k^\prime|^2\,dy + k^2 \int_0^\infty (1-\lambda V_0-W)\phi_k^2 \,dy \right)\\
		&= -k \|\sqrt{1-\lambda V_0-W}\|_\infty\phi_k^\prime(0_+;\lambda),
	\end{align*}
	where we used relation \eqref{eq:phi1} in the last equality. We deduce that $\phi^\prime(0_+;\lambda)<0$ and
	\begin{equation}\label{eq:phi2}
		|\phi_k^\prime(0_+;\lambda)|\leq k\|\sqrt{1-\lambda V_0-W}\|_\infty.
	\end{equation}
	Estimating the $L^2$-norm of $\phi_k(\cdot;\lambda)$, we obtain that
	\begin{align*}
		\|\phi_k(\cdot;\lambda)\|_2^2 =\frac{1}{\bar v}\|\sqrt{\bar v} \phi_k(\cdot;\lambda)\|_2^2\leq \frac{1}{\bar v}\int_0^\infty (1-\lambda V_0-W)\phi_k^2 \,dy \leq \frac{1}{\bar v k^2} |\phi_k^\prime(0_+;\lambda)|\leq \frac{1}{\bar v k} \|\sqrt{1-\lambda V_0-W}\|_\infty,
	\end{align*} 
	where we used \eqref{eq:phi1} and \eqref{eq:phi2}. In particular, we find that $\|\phi_k(\cdot;\lambda )\|_2 \lesssim 1$ as claimed in $(\tilde L3)$.
}
	\end{rem}

%
%   
%{\color{red} \texttt{Dieser Remark ist veraltet wg. Gabrieles neuem Argument. Kann hier weg und wird nach Theorem 2.10 eingesetzt}
%	\begin{rem} In some cases condition $(\tilde L2)$ can be formulated entirely on a property of the ansatz-functions $\phi_k(\cdot;\lambda)$. To formulate this suppose that the operators $L^\lambda_{0,k}$ admit fundamental systems $(\phi_k,\tilde\phi_k)$ of linearly independent solutions on $\R$ with the property $|\phi_k(y)|\lesssim e^{-\delta_k y}$, $|\tilde \phi_k(y)| \lesssim e^{\delta_k y}$ for some $\delta_k>0$ (this is the case for the example (P1), and  if $\beta<1$ also for example (P2)). Then we find that 
%	$$
%	0 \in \sigma(L_k^\lambda) \Leftrightarrow 2\phi_k^\prime(0_+;\lambda)+k^2\alpha=0
%	$$
%	and in case the equality on the right hand side holds, then $0$ is an eigenvalue of $\sigma(L_k^\lambda)$, cf. Lemma~\ref{eigenvalue} in the Appendix. Based on this characterization of $0$ belonging to the spectrum of $L_k^\lambda$ we can replace $(\tilde L2)$ by
%    \begin{itemize}
%			\item[$(\bar L2)$]  there exists a wavenumber $k_*\in \N$, $\lambda_*>0$, and an open interval $I_{\lambda_*}\subset \R_+$ containing $\lambda_*$ such that
%			\[
%			2\phi_k^\prime(0_+;\lambda)+k^2\alpha=0
%			\]
%			if and only if $(k,\lambda)=(k_*,\lambda_*)$ for any $k\in \N$ and $\lambda \in I_{\lambda_*}$.
%		\end{itemize}
%	\end{rem}  
%}

\medskip

For $s\geq 0$ denote the linearization of \eqref{eq:R} around $a=0$ by
   \[
	   A_\lambda:h_\sharp^{s}(\R)\subset h_\sharp^{s-2}(\R)\to h_\sharp^{s-2}(\R),\qquad (A_\lambda a)_k:=A_\lambda^ka_k \mbox{ for } k \in\Z,
   \]
   where 
   \[
   A_\lambda^k:=
   2\phi^\prime_k(0_+;\lambda)+k^2\alpha \mbox{ for } k\in \Z.
   \]
   Then \eqref{eq:R} can be written as 
      \begin{equation}\label{eq:A}
   	   A_\lambda a -n(a)=0,\qquad \mbox{where}\qquad n(a)_k= \frac{\gamma k^2}{4}(a*a*a)_k.
      \end{equation}
For $m\in \Z$ let us denote by $e^{m}\in l^2_\sharp(\R)$ the sequence, which satisfies $e^{m}_k=0$ for $k\neq \pm m$ and $e^{m}_{m}=-e^{m}_{-m}=\frac{1}{\sqrt{2}}$. 

\begin{lem} Assume $(\tilde L0)$--$(\tilde L3)$. Then 
\begin{equation} \label{phi_k_estimates}
\|\phi_k(\cdot;\lambda)\|_{L^\infty(\R)} \lesssim k^\frac{1}{2}, \quad \|\phi_k'(\cdot;\lambda)\|_{L^2(\R)} \lesssim k, \quad \|\phi_k'(\cdot;\lambda)\|_{L^\infty(\R)} \lesssim k^\frac{3}{2}
\end{equation}
uniformly for $\lambda \in I_{\lambda_*}$. In particular, $|\phi_k'(0;\lambda)| \lesssim |k|^\frac{3}{2}$ and consequently $A_\lambda^k = \alpha k^2 + O(k^\frac{3}{2})$ as $k\to \pm\infty$. 
\label{asymptotik}
\end{lem}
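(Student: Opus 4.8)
The plan is to read everything off the second-order ODE satisfied by $\phi_k(\cdot;\lambda)$, namely $\phi_k'' = k^2 q_\lambda \phi_k$ on $(0,\infty)$ with $\phi_k(0;\lambda)=1$ (this is just \eqref{eq:linear}), where $q_\lambda := 1-\lambda V_0 - W$ is bounded by some $M$ uniformly for $\lambda\in I_{\lambda_*}$ thanks to $(\tilde L0)$. The two extra ingredients are the uniform bound $\|\phi_k(\cdot;\lambda)\|_{L^2(0,\infty)}\leq C$ from $(\tilde L3)$ and the exponential decay of $\phi_k$ and $\phi_k'$ at $+\infty$ recorded in Remark~\ref{rem:existenz_decaying_mode}. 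Since $V_0,W$ are even, $\phi_k$ is even and $\phi_k'$ is odd, so all norms over $\R$ reduce (up to a factor $\sqrt2$ in $L^2$) to half-line norms, and it suffices to argue on $(0,\infty)$.

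The crucial and only nonroutine step is the bound $\|\phi_k'\|_{L^2(0,\infty)}\lesssim k$. First, differentiating the equation gives $\|\phi_k''\|_{L^2}=k^2\|q_\lambda \phi_k\|_{L^2}\leq Mk^2\|\phi_k\|_{L^2}\lesssim k^2$. Next, testing the ODE against $\phi_k$ and integrating by parts (the boundary term at $+\infty$ vanishes by decay, the one at $0$ produces $\phi_k'(0_+)$ because $\phi_k(0)=1$) yields the energy identity $\|\phi_k'\|_{L^2}^2 = -\phi_k'(0_+) - k^2\int_0^\infty q_\lambda \phi_k^2$, whose last term is $O(k^2)$ by $(\tilde L3)$. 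The apparent difficulty is that $\phi_k'(0_+)$ is a priori uncontrolled, making this self-referential. I would resolve it by the boundary trace inequality $|\phi_k'(0_+)|^2 \leq 2\|\phi_k'\|_{L^2}\|\phi_k''\|_{L^2}\lesssim k^2\|\phi_k'\|_{L^2}$, valid since $\phi_k'$ decays at $+\infty$; inserting this into the energy identity and using Young's inequality to absorb a fraction of $\|\phi_k'\|_{L^2}^2$ closes the bootstrap and gives $\|\phi_k'\|_{L^2}\lesssim k$ uniformly in $\lambda$.

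The remaining estimates then follow from the one-dimensional Gagliardo--Nirenberg/trace inequality $\|g\|_{L^\infty(0,\infty)}^2\leq 2\|g\|_{L^2}\|g'\|_{L^2}$ for functions decaying at $+\infty$. Taking $g=\phi_k$ gives $\|\phi_k\|_\infty^2\lesssim \|\phi_k\|_{L^2}\|\phi_k'\|_{L^2}\lesssim k$, hence $\|\phi_k\|_{L^\infty(\R)}\lesssim k^{1/2}$; taking $g=\phi_k'$ together with $\|\phi_k''\|_{L^2}\lesssim k^2$ gives $\|\phi_k'\|_\infty^2\lesssim \|\phi_k'\|_{L^2}\|\phi_k''\|_{L^2}\lesssim k\cdot k^2$, hence $\|\phi_k'\|_{L^\infty(\R)}\lesssim k^{3/2}$. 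All constants stay uniform in $\lambda\in I_{\lambda_*}$ since $M$ and $C$ are. In particular $|\phi_k'(0;\lambda)|\leq \|\phi_k'\|_{L^\infty}\lesssim |k|^{3/2}$, and because $A_\lambda^k = 2\phi_k'(0_+;\lambda)+k^2\alpha$ with $\phi_{-k}=\phi_k$, this yields $A_\lambda^k = \alpha k^2 + O(|k|^{3/2})$ as $k\to\pm\infty$.

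The main obstacle is exactly the circular-looking energy identity: the clean way around it is the trace bound $|\phi_k'(0_+)|^2\lesssim k^2\|\phi_k'\|_{L^2}$ followed by a Young absorption, which turns a seemingly undetermined quantity into a genuine $O(k)$ estimate. Once that is in place, the $L^\infty$-bounds on $\phi_k$ and $\phi_k'$ are pure interpolation, legitimate here only because Remark~\ref{rem:existenz_decaying_mode} guarantees the decay needed to discard the boundary contributions at $+\infty$.
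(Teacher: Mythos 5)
Your proof is correct, and it takes a genuinely different route from the paper's at the key point. The paper's proof proceeds in the opposite order: it first establishes $\|\phi_k\|_{L^\infty}\lesssim k^{1/2}$ by invoking an external result of Komornik ($\|u\|_\infty\leq C\|u\|_{L^2}$ for solutions of $-u''+qu=0$, applied on intervals of length $\sim 1/k$ so that the constant scales like $k^{1/2}$), then derives two energy identities by multiplying the ODE with $\phi_k$ and with $\phi_k'$ and integrating from $a$ to $\infty$, and finally closes the loop for $\|\phi_k'\|_{L^2}\lesssim k$ with the same Young-absorption you use. You avoid Komornik entirely: the observation $\|\phi_k''\|_{L^2}\leq k^2\|q_\lambda\|_\infty\|\phi_k\|_{L^2}\lesssim k^2$, read directly off the ODE, lets you control the boundary term $\phi_k'(0_+)$ by the trace inequality $|\phi_k'(0_+)|^2\leq 2\|\phi_k'\|_{L^2}\|\phi_k''\|_{L^2}$, and then the two $L^\infty$ bounds fall out at the end by Agmon interpolation rather than being inputs. (Your bound $\|\phi_k'\|_\infty^2\lesssim\|\phi_k'\|_{L^2}\|\phi_k''\|_{L^2}\lesssim k^2\|\phi_k'\|_{L^2}$ is in fact the same estimate the paper extracts from its second energy identity, just derived by substituting the ODE into Agmon's inequality.) What your route buys is self-containedness — no external citation, and the $L^\infty$ bound on $\phi_k$ becomes a corollary instead of a prerequisite — and a slightly better intermediate inequality ($k\|\phi_k'\|_{L^2}^{1/2}$ versus the paper's $k^{3/2}\|\phi_k'\|_{L^2}^{1/2}$ before absorption); what the paper's route buys is an $L^\infty$ bound on $\phi_k$ that is purely local and independent of any derivative information, which may be of independent interest. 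One point worth making explicit in your write-up: the decay needed to drop boundary terms at $+\infty$ and to run the trace/Agmon inequalities requires only $\phi_k\in H^2(0,\infty)$, which is part of $(\tilde L3)$, so no quantitative (and a priori $k$-dependent) exponential decay from Remark~\ref{rem:existenz_decaying_mode} enters the constants.
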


\begin{proof} By a result of Komornik, cf.~\cite{komornik}, the estimate
$$
\|u\|_\infty \leq C\|u\|_{L^2}
$$
holds true for every solution $u$ of $-u'' +q(y) u=0$ on $(a,b)$ with the constant $C=\max\Bigl\{6\sqrt{\|q\|_{L^1(a,b)}}, \frac{12}{\sqrt{b-a}}\Bigr\}$. We apply this result to the solutions $\phi_k(\cdot;\lambda)$ of \eqref{eq:linear} with $q=k^2(1-\lambda V_0-W)$, $a\geq 0$ and $b=a+c$ with $c:= 2 (\sqrt{\|1-\lambda V_0-W\|_\infty} k)^{-1}$. Then 
$$
6\sqrt{\|q\|_{L^1(a,b)}} \leq 6k\sqrt{\|1-\lambda V_0-W\|_\infty}\sqrt{c} = 6\sqrt{2}\sqrt{k} \sqrt[4]{\|1-\lambda V_0-W\|_\infty}=\frac{12}{\sqrt{b-a}}
$$
and thus for a constant $\tilde C$ only depending on $\|1-\lambda V_0-W\|_\infty$ we have $\|\phi_k(\cdot;\lambda)\|_{L^\infty(a,b)} \leq k^\frac{1}{2} \tilde C \|\phi_k(\cdot;\lambda)\|_{L^2(a,b)}\lesssim k^\frac{1}{2}$ by $(\tilde L3)$. Since $a\geq 0$ was arbitrary we obtain the first part of \eqref{phi_k_estimates}.

\medskip

Multiplying \eqref{eq:linear} with $u$, $u'$ and integrating from $a\geq 0$ to $\infty$ we get 
\begin{align}
\int_a^\infty k^2 (1-\lambda V_0(y)-W(y))\phi_k(y;\lambda)^2+\phi_k'(y;\lambda)^2\,dy & = -\phi_k(a;\lambda)\phi_k'(a;\lambda), \label{mult1}\\
\int_a^\infty 2k^2 (1-\lambda V_0(y)-W(y))\phi_k(y;\lambda)\phi_k'(y;\lambda)\,dy & = -\phi_k'(a;\lambda)^2, \label{mult2}
\end{align}
respectively. Using $(\tilde L3)$ and applying the Cauchy-Schwarz inequality to \eqref{mult2} we find 
\begin{equation} \label{mult3}
\|\phi_k'(\cdot;\lambda)\|_{L^\infty}^2 \lesssim k^2 \|\phi_k'(\cdot;\lambda)\|_{L^2(0,\infty)}
\end{equation}
and from \eqref{mult1}, \eqref{mult3} we get 
\begin{align*}
\|\phi_k'(\cdot;\lambda)\|_{L^2(0,\infty)}^2 & \lesssim k^2 + \|\phi_k(\cdot;\lambda)\|_{L^\infty(0,\infty)} \|\phi_k'(\cdot;\lambda)\|_{L^\infty(0,\infty)}\\
& \lesssim k^2 + \|\phi_k(\cdot;\lambda)\|_{L^\infty(0,\infty)} k \|\phi_k'(\cdot;\lambda)\|_{L^2(0,\infty)}^\frac{1}{2}.
\end{align*}
The $L^\infty$--estimate from the first part of the lemma leads to 
$$
\|\phi_k'(\cdot;\lambda)\|_{L^2(0,\infty)}^2  \lesssim k^2 + k^\frac{3}{2}\|\phi_k'(\cdot;\lambda)\|_{L^2(0,\infty)}^\frac{1}{2} \leq k^2 +C_\epsilon k^2 + \epsilon \|\phi_k'(\cdot;\lambda)\|_{L^2(0,\infty)}^2,
$$
where we have used Young's inequality with exponents $4/3$ and $4$. This implies the second inequality in \eqref{phi_k_estimates}.  Inserting this into \eqref{mult3} we obtain the third inequality in \eqref{phi_k_estimates}. 
\end{proof} 

\begin{lem} Assume $(\tilde L0)$--$(\tilde L3)$ and let $s\geq 0$. Then the operator $A_\lambda: h_\sharp^{s}(\R)\subset h_\sharp^{s-2}(\R)\to h_\sharp^{s-2}(\R)$ is self-adjoint. Its spectrum is discrete and consist of the values $(A_\lambda^k)_{k\in \N}$. Moreover $\ker A_{\lambda_*}=\spa\{e^{k_*}\}$ and $\ker A_{\lambda}=\{0\}$ for $\lambda\in I_{\lambda_*}\setminus\{\lambda_*\}$.
\label{self_adjoint}  
\end{lem}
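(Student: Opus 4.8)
The plan is to exploit that $A_\lambda$ is nothing but a diagonal (multiplication) operator on the sequence spaces, so that all three assertions reduce to elementary statements about its real multipliers $A_\lambda^k = 2\phi_k'(0_+;\lambda)+k^2\alpha$. The only genuinely delicate point is that $A_\lambda$ maps between the two different spaces $h_\sharp^s$ and $h_\sharp^{s-2}$, so I would first fix the Hilbert-space framework in which self-adjointness is claimed, namely $h_\sharp^{s-2}(\R)$ with its weighted inner product $\langle\cdot,\cdot\rangle_{h^{s-2}(\R)}$.

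For self-adjointness I would argue as follows. Since each $\phi_k(\cdot;\lambda)$ is real-valued, the multipliers $A_\lambda^k$ are real, and symmetry of $A_\lambda$ on $h_\sharp^s$ with respect to $\langle\cdot,\cdot\rangle_{h^{s-2}(\R)}$ is immediate from the identity $\sum_k(1+|k|)^{2(s-2)}A_\lambda^k a_k b_k=\sum_k(1+|k|)^{2(s-2)}a_k\,A_\lambda^k b_k$. Using Lemma~\ref{asymptotik}, which gives $A_\lambda^k=\alpha k^2+O(|k|^{3/2})$ and hence $|A_\lambda^k|\eqsim(1+|k|)^2$ for large $|k|$, I would check that the maximal domain $\{a\in h_\sharp^{s-2}:A_\lambda a\in h_\sharp^{s-2}\}$ coincides with $h_\sharp^s$, since the tails satisfy $(1+|k|)^{2(s-2)}(A_\lambda^k)^2\eqsim(1+|k|)^{2s}$. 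Self-adjointness then follows from the basic range criterion: for $g\in h_\sharp^{s-2}$ the equations $(A_\lambda^k\pm\mathrm i)a_k=g_k$ have the unique solution $a_k=g_k/(A_\lambda^k\pm\mathrm i)$, and because $|A_\lambda^k\pm\mathrm i|\gtrsim(1+|k|)^2$ for large $|k|$ while $|A_\lambda^k\pm\mathrm i|\geq1$ for all $k$, one obtains $a\in h_\sharp^s$; thus $\mathrm{Ran}(A_\lambda\pm\mathrm i)=h_\sharp^{s-2}$.

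For the spectrum I would note that $e^k$ is an eigenvector with eigenvalue $A_\lambda^k$ and that $A_\lambda^{-k}=A_\lambda^k$ by $\phi_{-k}=\phi_k$. The same resolvent computation as above, now with $(A_\lambda^k-\mu)$ in the denominator, shows that $A_\lambda-\mu$ is boundedly invertible from $h_\sharp^s$ onto $h_\sharp^{s-2}$ for every $\mu\notin\{A_\lambda^k:k\in\N\}$, using that $|A_\lambda^k|\to\infty$ guarantees $\mathrm{dist}(\mu,\{A_\lambda^k\})>0$. Hence $\sigma(A_\lambda)=\{A_\lambda^k:k\in\N\}$, and since $|A_\lambda^k|\to\infty$ this set has no finite accumulation point and each value is attained by only finitely many $k$, so the spectrum is discrete.

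Finally, for the kernel I would invoke the characterisation recorded in the remark following Theorem~\ref{main2}: because $V_0,W$ are even and $0\in\rho(L_{0,\lambda}^k)$ by $(\tilde L1)$, the function $y\mapsto\phi_k(|y|;\lambda)$ is, up to scalars, the only candidate eigenfunction of $L_\lambda^k$ at the eigenvalue $0$, and the jump condition at $y=0$ shows that $0$ is an eigenvalue of $L_\lambda^k$ if and only if $A_\lambda^k=2\phi_k'(0_+;\lambda)+k^2\alpha=0$. Assumption $(\tilde L2)$ therefore translates into $A_{\lambda_*}^{k_*}=0$ together with $A_\lambda^k\neq0$ for all $(k,\lambda)\in\N\times I_{\lambda_*}$ with $(k,\lambda)\neq(k_*,\lambda_*)$. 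Since $a\in\ker A_\lambda$ precisely when $a_k=0$ for every $k$ with $A_\lambda^k\neq0$, this yields $\ker A_{\lambda_*}=\spa\{e^{k_*}\}$ and $\ker A_\lambda=\{0\}$ for $\lambda\in I_{\lambda_*}\setminus\{\lambda_*\}$. I expect the only real care to be needed in the self-adjointness step, where one must track the two different weighted norms; the spectral and kernel statements are then direct consequences of the diagonal structure.
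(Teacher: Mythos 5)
Your proposal is correct and takes essentially the same route as the paper: both exploit the diagonal structure of $A_\lambda$, use the asymptotics $A_\lambda^k=\alpha k^2+O(k^{3/2})$ of Lemma~\ref{asymptotik} to get self-adjointness (domain $h_\sharp^s$ inside $h_\sharp^{s-2}$) and discreteness of the spectrum $\{A_\lambda^k:k\in\N\}$, and identify the kernel through the equivalence between $A_\lambda^k=0$ and $0$ being an eigenvalue of $L_\lambda^k$, combined with assumption $(\tilde L2)$. Your version merely spells out details the paper compresses — the deficiency/range criterion where the paper says ``one can verify,'' and a direct resolvent estimate where the paper invokes Weyl sequences — so it is a refinement, not a different argument.
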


\begin{proof} Due to Lemma~\ref{asymptotik} and since  $\phi_{-k} = \phi_k$ for all $k\in \Z$ one can verify that $A_\lambda: h_\sharp^{s}(\R)\subset h_\sharp^{s-2}(\R)\to h_\sharp^{s-2}(\R)$ acting like an infinite dimensional diagonal matrix is self-adjoint. Using the characterization of the spectrum via Weyl-sequences one sees that $A_\lambda$ has the spectrum $\sigma(A_\lambda)=\clos\{A_\lambda^k: k\in \N\}$. Due to Lemma~\ref{asymptotik} the set $\{A_\lambda^k: k\in \N\}$ is discrete and hence $\sigma(A_\lambda^k)$ consists of the set of eigenvalues $\{A_\lambda^k: k\in \N\}$. Finally, let us determine the kernel of $A_\lambda$. On the one hand, $a\in \ker A_\lambda$ if and only if there exists $k\in \N$ such that $A_\lambda^k=0$, and in this case $a=e^k$ (here we use that $A_\lambda^k=A_\lambda^{-k}$). On the other hand, using the characterization of the domain of $L_k^\lambda$ from Section~\ref{Ss:P1} we know that $L_k^\lambda\phi =0$ if and only if $\phi(x)=\phi_k(|x|;\lambda)$ and $A_\lambda^k=0$. Thus, bringing both facts together and using assumption $(\tilde L2)$ we obtain the final claim of the lemma.
\end{proof}
%since  $\phi_{-k} = \phi_k$ for all $k\in \Z$
Similarly as in \eqref{def_projectedop} we define the operator 
\[
	\tilde A_\lambda:= A_\lambda + P^*,
\]
where $P^*a=a_{k_*}e^{k_*}$. 

\begin{lem}\label{lem:FA} Assume $(\tilde L0)$--$(\tilde L3)$ and $s\geq 0$. Then we have that $0\in \rho(\tilde A_\lambda)$ for all $\lambda \in I_{\lambda_*}$ and hence $\tilde A_\lambda^{-1}: h_\sharp^{s-2} (\R)\to h_\sharp^{s}(\R)$ is a bounded linear operator. Moreover, if $f\in h^s_\sharp(\R)$, $s\geq \frac{5}{2}$ is given and $a\in h^s_\sharp(\R)$ solves 
\begin{equation} \label{eq:inv_delta} 
a = \tilde A_\lambda^{-1}(-\gamma M(f)+P^*a) \qquad \mbox{where}\qquad M(f)_k := k^2 f_k
\end{equation}
then $\Phi(x,y) := \sum_{k\in \N} a_k \phi_k(x;\lambda)\sin(kx)$ satisfies $\Phi\in X_s$ and solves 
\begin{equation*} \label{eq:direkt_delta}
L_\lambda \Phi + \gamma\delta_0(y) F_{xx}=0
\end{equation*}
for $F(x,y) = \sum_{k\in \N} f_k \phi_k(y;\lambda)\sin(kx)$ in the weak sense, i.e.,
	\[
	\int_{\T}\int_{\R} \Phi_y\Psi_y - \left(1-\lambda V_0(y)-W(y)\right)\Phi_{xx}\Psi\,dy\,dx + \int_\T \bigl(\alpha\Phi_{xx}(x,0)+ \gamma F_{xx}(x,0)\bigr)\Psi(x,0) \,dx=0
	\]
	for any $\Psi \in H^1(\T; H^1(\R))$. 
\end{lem}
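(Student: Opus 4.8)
The plan is to prove the two assertions in turn, first establishing invertibility of $\tilde A_\lambda$ and then using it to verify that the ansatz $\Phi$ is the claimed weak solution. The essential observation for the first assertion is that $\tilde A_\lambda = A_\lambda + P^*$ is \emph{diagonal} in the basis $\{e^m\}_{m\in\N}$ of $h^s_\sharp(\R)$: since $e^m$ is supported on $\pm m$, the rank-one addition $P^*$ only modifies the $e^{k_*}$-entry and leaves all others untouched, so $\tilde A_\lambda^{k}=A_\lambda^k$ for $k\neq k_*$ while the $e^{k_*}$-entry is $A_\lambda^{k_*}$ shifted by a nonzero constant. I would then prove the two-sided bound $|\tilde A_\lambda^k|\eqsim (1+k)^2$ uniformly for $k\in\N$ and $\lambda\in I_{\lambda_*}$. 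For large $k$ this follows at once from Lemma~\ref{asymptotik}, namely $A_\lambda^k=\alpha k^2+O(k^{3/2})$ with error uniform in $\lambda$ and $\alpha\neq 0$. For the finitely many remaining $k$ I would invoke $(\tilde L2)$ together with Lemma~\ref{self_adjoint}: at $\lambda=\lambda_*$ one has $A_{\lambda_*}^k\neq0$ for $k\neq k_*$ and a nonzero shifted $e^{k_*}$-entry, and since $\lambda\mapsto\phi_k'(0_+;\lambda)$ is continuous (continuous dependence of the decaying solution of \eqref{eq:linear} on $\lambda$, cf.\ Remark~\ref{rem:existenz_decaying_mode}), shrinking $I_{\lambda_*}$ keeps each of these entries bounded away from zero. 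Dividing the $k$-th coordinate by $\tilde A_\lambda^k$ then gains exactly two orders, giving $0\in\rho(\tilde A_\lambda)$ and boundedness of $\tilde A_\lambda^{-1}\colon h^{s-2}_\sharp(\R)\to h^s_\sharp(\R)$.

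For the second assertion I would start by applying $\tilde A_\lambda$ to \eqref{eq:inv_delta} and cancelling the $P^*a$ terms, which yields $A_\lambda a=-\gamma M(f)$, i.e.
\[
\bigl(2\phi_k'(0_+;\lambda)+k^2\alpha\bigr)a_k=-\gamma k^2 f_k \qquad (k\in\N).
\]
The membership $\Phi\in X_s$ I would check componentwise from $a\in h^s_\sharp(\R)$ and the estimates \eqref{phi_k_estimates} together with $\|\phi_k(\cdot;\lambda)\|_{L^2(\R)}\lesssim 1$ from $(\tilde L3)$: the bound $\|\phi_k\|_{H^1(\R)}^2\lesssim(1+k)^2$ controls the $H^{s-1}(\T;H^1(\R))$-norm by $\|a\|_{h^s(\R)}$, while $\|\phi_k\|_{L^\infty}\lesssim k^{1/2}$ and a Weierstrass majorant give the $C(\R;H^{s-\frac12}(\T))$-part. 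Note that $\partial_y\Phi$ is continuous on $\R\setminus\{0\}$ with a jump $2\sum_k a_k\phi_k'(0_+;\lambda)\sin(kx)$ across $y=0$, which is precisely the transmission quantity entering the boundary condition.

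To verify the weak identity I would test against $\Psi(x,y)=\psi(y)\sin(mx)$ with $\psi\in H^1(\R)$, $m\in\N$; orthogonality of $\{\sin(kx)\}$ shows that the $\cos$- and constant-in-$x$ parts of a general $\Psi\in H^1(\T;H^1(\R))$ contribute nothing, so this family suffices. Integrating $\pi a_m\int_\R\phi_m'\psi'\,dy$ by parts separately on $(-\infty,0)$ and $(0,\infty)$, using $\phi_m'(0_-)=-\phi_m'(0_+)$ and $\phi_m''=m^2(1-\lambda V_0-W)\phi_m$ from \eqref{eq:linear}, the bulk contribution cancels exactly against the term $-\int_\T\int_\R(1-\lambda V_0-W)\Phi_{xx}\Psi$, leaving $-2\pi a_m\phi_m'(0_+;\lambda)\psi(0)$. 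Since $\phi_k(0;\lambda)=1$, the boundary integral equals $-\pi m^2(\alpha a_m+\gamma f_m)\psi(0)$, so the whole left-hand side reduces to $-\pi\psi(0)\bigl(2a_m\phi_m'(0_+;\lambda)+m^2\alpha a_m+\gamma m^2 f_m\bigr)$, which vanishes by the displayed identity $A_\lambda a=-\gamma M(f)$.

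I expect the main obstacle to be the bookkeeping that legitimises this formal computation: justifying the termwise integration by parts across the singular point $y=0$ and, above all, the interchange of the $k$-summation with the $y$-integration in each of the three terms. Here the decay estimates \eqref{phi_k_estimates}, the uniform bound from $(\tilde L3)$, and the summability guaranteed by $s\geq\frac52$ — which also ensures $\Phi_{xx}(\cdot,0),F_{xx}(\cdot,0)\in L^2(\T)$ and hence well-defined boundary integrals — are exactly what make the dominated-convergence and Parseval arguments go through. A secondary technical point is the uniform-in-$k$ lower bound of the first part, which rests on the continuity of $\lambda\mapsto\phi_k'(0_+;\lambda)$ and on shrinking $I_{\lambda_*}$ to absorb the finitely many low-frequency entries.
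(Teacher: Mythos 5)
Your proposal is correct and follows essentially the same route as the paper: invertibility of $\tilde A_\lambda$ is obtained from its diagonal action together with the asymptotics $A_\lambda^k=\alpha k^2+O(k^{3/2})$ of Lemma~\ref{asymptotik} and the kernel characterization from Lemma~\ref{self_adjoint}/$(\tilde L2)$, and the weak identity is reduced via \eqref{eq:inv_delta} to $A_\lambda a=-\gamma M(f)$ and verified mode-by-mode using the ODE \eqref{eq:linear}, the evenness-induced jump $2\phi_k'(0_+;\lambda)$, and the summability afforded by $s\geq\frac52$ — exactly the paper's computation, merely organized with single-mode test functions plus orthogonality/density instead of summing the componentwise identities against the Fourier coefficients of a general $\Psi$. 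Your explicit shrinking of $I_{\lambda_*}$ to keep the finitely many low-frequency entries (including $A_\lambda^{k_*}+1$) away from zero is, if anything, slightly more careful than the paper's terse treatment of the same point.
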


\begin{proof} Lemma~\ref{self_adjoint} says that $\ker A_{\lambda_*}=\spa \{e^{k_*}\}$ and $\ker A_\lambda=\{0\}$ for any $\lambda \in I_{\lambda_*}\setminus\{\lambda_*\}$. We need to show that $0\in \rho (\tilde A_\lambda)$ for any $\lambda\in I_{\lambda_*}$. Let $b\in h^{s-2}_\sharp(\R)$ be arbitrary, then $\tilde A_\lambda a = b$ if and only if
   		\begin{equation*}
   		\left\{\begin{array}{lcl}  A_\lambda^k a_k=b_k,\quad && \mbox{if}\quad k\neq k_*,\\ A_\lambda^ka_k+a_k=b_k,\quad && \mbox{if}\quad k= k_*,\end{array} \right.
   		\end{equation*}
   		which is equivalent to
   		\begin{equation} \label{def_ak}
	   		a_k= \frac{1}{A_\lambda^k}b_k\quad \mbox{if}\quad k\neq k_*\qquad\mbox{and}\qquad 	a_{k_*}= \frac{1}{A_\lambda^{k_* }+1}b_{k_*}.
   		\end{equation}
   		Due to Lemma~\ref{asymptotik} we obtain that for any $b\in h_\sharp^{s-2}(\R)$ the sequence $a$ defined by \eqref{def_ak} belongs to $h_\sharp^{s}(\R)$  and solve $\tilde A_\lambda a=b$; whence $0\in \rho(\tilde A_\lambda)$.

   		Now suppose that $f\in h_\sharp^s(\R)$ with $s\geq \frac{5}{2}$ and that $a\in h_\sharp^s(\R)$ solves \eqref{eq:inv_delta}. The regularity of $\Phi$ follows from Lemma~\ref{lem:regularity}. Moreover, $(A_\lambda a)_k=-\gamma k^2 f_k$ and hence 
\begin{equation} \label{formel_phikstrich}
2\phi_k'(0_+;\lambda)a_k +\alpha k^2 a_k= -\gamma k^2 f_k.
\end{equation}
Using that 
$$
L_{0,\lambda}^k \phi_k = 0 \mbox{ on } \R\setminus\{0\} \mbox{ for all } k\in \N,
$$
we deduce by testing with $a_k\psi_k\in H^1(\R)$ and summing for $1\leq k\leq K_0$ that 
$$
0=\sum_{k=1}^{K_0}\int_\R a_k\phi_k' \psi_k' + k^2(1-\lambda V_0(y)-W(y))\phi_k\psi_k \,dy+ a_k(\phi_k'(0_+;\lambda)-\phi_k'(0_-;\lambda))\psi_k(0).
$$
Since $\phi_k(\cdot;\lambda)$ is even with respect to $y$ we obtain by \eqref{formel_phikstrich}
$$
0=\sum_{k=1}^{K_0}\int_\R a_k\phi_k' \psi_k' + k^2(1-\lambda V_0(y)-W(y))\phi_k\psi_k \,dy -\alpha k^2 a_k\underbrace{\phi_k(0;\lambda)}_{=1}\psi_k(0)-\gamma k^2 f_k \psi_k(0). 
$$
Taking the limit $K_0\to\infty$ in the previous equation will lead to 
$$
0=\int_{\T}\int_{\R} \Phi_y\Psi_y - \left(1-\lambda V_0(y)-W(y)\right)\Phi_{xx}\Psi+ \int_{\T} (\alpha \Phi_{xx}(x,0)+\gamma F_{xx}(x,0)) \Psi(x,0)\,dx
$$
for any $\Psi \in H^1(\T;H^1(\R))$ due to the following estimates:
\begin{align*}
\int_{\T}\int_{\R} |\Phi_y\Psi_y| \,dy\,dx & \leq \sum_{k\in \N} \|a_k \phi_k'\|_{L^2(\R)} \|\psi_k'\|_{L^2(\R)} \leq \|\Phi\|_{L^2(\T,H^1(\R))} \|\Psi\|_{L^2(\T,H^1(\R))} \\
& \leq C \|a\|_{h^1(\R)} \|\Psi\|_{L^2(\T,H^1(\R))}, \\
\int_{\T}\int_{\R} |\bigl(1-\lambda V_0(y)-W(y)\bigr) \Phi_{xx} \Psi| \,dy\,dx & \leq \sum_{k\in \N} \|1-\lambda V_0-W\|_{L^\infty(\R)} k^2\|\phi_k\|_{L^2(\R)} \|\psi_k\|_{L^2(\R)} \\ &  \leq \|1-\lambda V_0-W\|_{L^\infty(\R)}  \|\Phi\|_{H^2(\T,L^2(\R))} \|\Psi\|_{L^2(\T,L^2(\R))} \\
& \leq C \|a\|_{h^2(\R)}\|\Psi\|_{L^2(\T,L^2(\R))} \\
\int_{\T} |\Phi_{xx}(x,0) \Psi(x,0)|\,dx & \leq \sup_y \|\Phi_{xx}(\cdot,y)\|_{L^2(\T)} \|\Psi(\cdot,y)\|_{L^2(\T)} \\
& \leq \|\Phi\|_{C(\R,H^2(\T))}\|\Psi\|_{C(\R,L^2(\T))} \\
& \leq C \|a\|_{h^\frac{5}{2}(\R)} \|\Psi\|_{C(\R,L^2(\T))}, \\
\int_{\T} |F_{xx}(x,0) \Psi(x,0)|\,dx & \leq \|f\|_{h^\frac{5}{2}(\R)} \|\Psi\|_{C(\R,L^2(\T))}
\end{align*}
together with the continuous embeddings $H^1(\T;H^1(\R)) \subset L^2(\T;H^1(\R))\cap C(\R;L^2(\T))$ and $h_\sharp^s(\R)\subset h_\sharp^{5/2}(\R)\subset h^2_\sharp(\R)\subset h^1_\sharp(\R)$ since $s\geq \frac{5}{2}$.  
\end{proof}
   
In the same spirit as in Section \ref{S:Gamma_b}, let us reformulate our problem \eqref{eq:A} in a way suitable for applying the Crandall--Rabinowitz theorem. Using the above lemma, equation \eqref{eq:A} is equivalent to
   \[
   G(a,\lambda)=0,
   \]
   where the function $G:h_\sharp^s(\R)\times I_{\lambda_*}\to h_\sharp^s(\R)$, $s\geq \frac{5}{2}$, is defined by
   \begin{equation} \label{def_M}
   G(a,\lambda):=a+\tilde A_\lambda^{-1}\left(-\frac{\gamma}{4} M(a*a*a)-P^*a\right) \qquad \mbox{and}\qquad M(f)_k := k^2 f_k \quad \mbox{for} \quad f\in h^s_\sharp(\R).
   \end{equation}
   
   \begin{rem} \label{algebra_and_mapping}
   	\emph{
   	Notice that $h_\sharp^s(\R)$, $s\geq 1$ is a Banach algebra, cf. Lemma~\ref{lem:algebra}. Thus, for $a\in h_\sharp^s(\R)$ the nonlinearity $a*a*a$ stays in $h_\sharp^s(\R)$ and $M(a*a*a)\in h_\sharp^{s-2}(\R)$. Hence, in order to control the nonlinearity in $G(a,\lambda)$, it is necessary that $\tilde A_\lambda^{-1}$ is a bounded operator from $h_\sharp^{s-2}(\R)$ to $h_\sharp^{s}(\R)$. Otherwise, assume that we would only have that $\tilde  A_{\lambda}^{-1}$ is bounded from $h_\sharp^{s-2}(\R)$ to $h_\sharp^{s'}(\R)$ where $s'<s$,
   	then the mapping $G$ is merely bounded from $h_\sharp^s(\R)\times I_{\lambda}\to h_\sharp^{s'}(\R)$. In this case, the Fr\'echet derivative has the property that $D_aG(0,\lambda):h_\sharp^s(\R)\to h_\sharp^s(\R)$ (cf. Lemma~\ref{lem:prop_G}(ii) below) but is no longer a Fredholm operator from $h_\sharp^s(\R)\to h_\sharp^{s'}(\R)$ since the co-dimension of its image is infinite. The Fredholm property at $\lambda=\lambda_*$, however, is important for applying the Crandall--Rabinowitz theorem for bifurcation.
   }
   	\end{rem}

The following lemma provides the necessary preparations to apply bifurcation theory to $G(a,\lambda)=0$.

\begin{lem} \label{lem:prop_G} Let $s\geq \frac{5}{2}$.
The map $G:h_\sharp^s(\R)\times I_{\lambda_*} \to h_\sharp^s(\R)$ is a $C^\infty$-map. Moreover the following holds:
\begin{itemize}
\item[(i)] The function $\phi_k$ is continuously differentiable with respect to $\lambda$ and $\psi_k(y;\lambda) := \partial_\lambda\phi_k(y;\lambda)$ satisfies 
\begin{equation} \label{psi_k_def}
L^\lambda_{0,k} \psi_k = k^2 V_0(y) \phi_k\mbox{ on } (0,\infty), \quad \psi_k(0;\lambda)=0
\end{equation}
and $\psi_k'(0;\lambda)=k^2\int_0^\infty V_0 \phi_k^2\,dy=O(k^2)$.
\item [(ii)] The linearization of $G$ about $a=0$, given by
\[
D_a G(0,\lambda)= \Id - \tilde A_\lambda^{-1}P^*:h_\sharp^s(\R) \to h_\sharp^s(\R)
\]
is a Fredholm operator of index zero. Its kernel is trivial for $\lambda \in I_{\lambda_*}$, $\lambda\neq\lambda_*$ and it is given by $\spa\{e^{k_*}\}$ if $\lambda=\lambda_*$. 
\item[(iii)] The mixed second derivative of $G$ about $a=0$ is given by 
\begin{equation*} \label{partial_alambda}
D^2_{a\lambda} G(0,\lambda)= \tilde A_\lambda^{-1} B \tilde A_\lambda^{-1} P^*: h^s_\sharp(\R)\to h_\sharp^{s+2}(\R)\subset h_\sharp^s(\R),
\end{equation*}
where $B:h_\sharp^{s+2}(\R)\to h_\sharp^s(\R)$ is the pointwise multiplication with $2\psi_k'(0;\lambda)$. 
\end{itemize}
\end{lem}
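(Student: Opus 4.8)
The plan is to proceed exactly as in Lemma~\ref{lem:prop_F}: the dependence of $G$ on $a$ is polynomial and is handled by the Banach algebra structure of $h_\sharp^s(\R)$ (Lemma~\ref{lem:algebra}), so that $a*a*a\in h_\sharp^s(\R)$ and $M(a*a*a)\in h_\sharp^{s-2}(\R)$, after which $\tilde A_\lambda^{-1}:h_\sharp^{s-2}(\R)\to h_\sharp^s(\R)$ from Lemma~\ref{lem:FA} brings everything back to $h_\sharp^s(\R)$; all the genuinely $\lambda$-dependent content sits in the diagonal multiplier $\tilde A_\lambda^{-1}$. I would establish (i) first, because the $\lambda$-derivative of the symbol of $A_\lambda$ is precisely the operator $B$ of (iii), and the bound $\psi_k'(0;\lambda)=O(k^2)$ is what controls the rest.

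For part (i), I would differentiate the defining relations $L_{0,k}^\lambda\phi_k=0$ on $(0,\infty)$ and $\phi_k(0;\lambda)=1$ with respect to $\lambda$. Differentiability of $\phi_k$ in $\lambda$ is justified by the fact that $1-\lambda V_0-W$ is affine in $\lambda$, so $L_{0,\lambda}^k$ is a self-adjoint holomorphic family of type (A); on the region where $0\in\rho(L_{0,\lambda}^k)$ (guaranteed by $(\tilde L1)$) the resolvent, and hence the normalized decaying solution $\phi_k(\cdot;\lambda)$ of Remark~\ref{rem:existenz_decaying_mode} obtained from $(L_{0,\lambda}^k)^{-1}\mathbf{1}_{[-2,-1]}$ after restriction to $[0,\infty)$ and rescaling so that $\phi_k(0;\lambda)=1$, depends smoothly on $\lambda$ with values in $H^2(\R)$, and point evaluation of the function and its derivative at $0$ is continuous by $H^2(\R)\hookrightarrow C^1(\R)$. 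Since $\partial_\lambda L_{0,k}^\lambda=-k^2V_0$ acts by multiplication, setting $\psi_k=\partial_\lambda\phi_k$ yields $L_{0,k}^\lambda\psi_k=k^2V_0\phi_k$ on $(0,\infty)$ and $\psi_k(0;\lambda)=0$. To identify $\psi_k'(0_+;\lambda)$ I would apply Green's identity on $(0,\infty)$ to the pair $(\psi_k,\phi_k)$: using $L_{0,k}^\lambda\phi_k=0$, the decay at $+\infty$, and $\phi_k(0)=1$, $\psi_k(0)=0$, the boundary term reduces to $\psi_k'(0_+;\lambda)=k^2\int_0^\infty V_0\phi_k^2\,dy$. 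The bound $O(k^2)$ follows from $|\int_0^\infty V_0\phi_k^2|\le\|V_0\|_\infty\|\phi_k\|_{L^2(0,\infty)}^2\lesssim 1$, uniformly for $\lambda\in I_{\lambda_*}$, by $(\tilde L3)$.

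Parts (ii) and (iii) are then formal. For (ii), since $a\mapsto a*a*a$ is cubic its derivative at $a=0$ vanishes, so $D_aG(0,\lambda)=\Id-\tilde A_\lambda^{-1}P^*$; as $P^*$ has rank one, $\tilde A_\lambda^{-1}P^*$ is finite rank, hence compact, and $D_aG(0,\lambda)$ is Fredholm of index zero. Its kernel is computed by applying $\tilde A_\lambda$ to the identity $a=\tilde A_\lambda^{-1}P^*a$ and using $\tilde A_\lambda=A_\lambda+P^*$, which reduces to $A_\lambda a=0$; Lemma~\ref{self_adjoint} then gives $\ker=\{0\}$ for $\lambda\neq\lambda_*$ and $\spa\{e^{k_*}\}$ for $\lambda=\lambda_*$. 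For (iii), I would use $\partial_\lambda\tilde A_\lambda^{-1}=-\tilde A_\lambda^{-1}(\partial_\lambda\tilde A_\lambda)\tilde A_\lambda^{-1}$; since $P^*$ is $\lambda$-independent and $A_\lambda$ is the diagonal multiplier with symbol $A_\lambda^k=2\phi_k'(0_+;\lambda)+\alpha k^2$, one has $\partial_\lambda\tilde A_\lambda=\partial_\lambda A_\lambda=B$, the multiplication by $2\psi_k'(0;\lambda)$. Differentiating $D_aG(0,\lambda)=\Id-\tilde A_\lambda^{-1}P^*$ in $\lambda$ then gives $D^2_{a\lambda}G(0,\lambda)=\tilde A_\lambda^{-1}B\tilde A_\lambda^{-1}P^*$, and the mapping property is read off the symbols: $B:h_\sharp^{s+2}(\R)\to h_\sharp^s(\R)$ is bounded because its symbol is $O(k^2)$ by (i), while $\tilde A_\lambda^{-1}$ gains two derivatives, so the composition maps $h_\sharp^s(\R)$ into $h_\sharp^{s+2}(\R)\subset h_\sharp^s(\R)$.

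The remaining point, and the main obstacle, is the full $C^\infty$ statement, i.e.\ that $\lambda\mapsto\tilde A_\lambda^{-1}\in\mathcal{L}(h_\sharp^{s-2}(\R),h_\sharp^s(\R))$ is smooth. Since $\tilde A_\lambda^{-1}$ is the diagonal multiplier with symbol $1/A_\lambda^k$ (and $1/(A_\lambda^{k_*}+1)$ at $k=k_*$), its $j$-th $\lambda$-derivative is, by the quotient rule, a sum of terms $\partial_\lambda^{i_1}A_\lambda^k\cdots\partial_\lambda^{i_m}A_\lambda^k/(A_\lambda^k)^{m+1}$. For each such symbol to be $O(k^{-2})$, hence to define a bounded map $h_\sharp^{s-2}(\R)\to h_\sharp^s(\R)$, one needs the uniform bounds $\partial_\lambda^jA_\lambda^k=O(k^2)$ for all orders $j$, together with $|A_\lambda^k|\gtrsim k^2$ from Lemma~\ref{asymptotik} and $(\tilde L2)$. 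I would obtain these higher-order bounds by iterating the computation in (i): with $\phi_k^{(j)}=\partial_\lambda^j\phi_k$ one finds $L_{0,k}^\lambda\phi_k^{(j)}=jk^2V_0\phi_k^{(j-1)}$ on $(0,\infty)$ and $\phi_k^{(j)}(0)=0$, whence Green's identity gives $\phi_k^{(j)}{}'(0_+)=jk^2\int_0^\infty V_0\phi_k^{(j-1)}\phi_k\,dy$, so the $O(k^2)$ bound follows inductively provided $\|\phi_k^{(j)}\|_{L^2(0,\infty)}\lesssim 1$ uniformly in $k$ and $\lambda$. Securing this uniform $L^2$-control of the iterated derivatives — which rests on the uniform resolvent and exponential-dichotomy estimates underlying $(\tilde L3)$, cf.\ Remark~\ref{rem:existenz_decaying_mode} — is the technical heart of the argument; the polynomial $a$-dependence and the joint continuity of the mixed derivatives are then routine.
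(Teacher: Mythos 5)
Your proposal follows essentially the same route as the paper's proof: for (i) you differentiate the defining relations for $\phi_k$ and use Green's identity on $(0,\infty)$ together with $(\tilde L3)$ to get $\psi_k'(0_+;\lambda)=k^2\int_0^\infty V_0\phi_k^2\,dy=O(k^2)$; for (ii) you use the Banach algebra structure, the finite-rank (hence compact) perturbation argument, and the kernel characterization via Lemma~\ref{self_adjoint}; and for (iii) you differentiate the resolvent of the diagonal multiplier exactly as the paper does. The one place you go beyond the paper is the full $C^\infty$ claim: the paper's proof only establishes the first-order $\lambda$-differentiability that feeds into the Crandall--Rabinowitz argument, whereas your Leibniz/Green's identity recursion $L_{0,k}^\lambda\phi_k^{(j)}=jk^2V_0\phi_k^{(j-1)}$, $\phi_k^{(j)}{}'(0_+)=jk^2\int_0^\infty V_0\phi_k^{(j-1)}\phi_k\,dy$ correctly identifies what is needed for all higher $\lambda$-derivatives (uniform-in-$k$ $L^2$ bounds on the $\phi_k^{(j)}$), a point the paper leaves implicit.
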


\begin{proof} (i) We are only interested in $\psi_k(\cdot,\lambda)=\partial_\lambda\phi_k(\cdot,\lambda)$ on $[0,\infty)$. To find $\psi_k(\cdot,\lambda)$ we differentiate $L_{0,k}^\lambda \phi_k=0$ on $(0,\infty)$, $\phi_k(0;\lambda)=1$ with respect to $\lambda\in I_{\lambda_*}$ and obtain \eqref{psi_k_def}. If we define $Q\phi_k(\cdot,\lambda):\R\to\R$ as the odd extension around $y=0$ of $\phi_k(\cdot,\lambda): (0,\infty)\to\R$ then we see that $\psi_k(\cdot,\lambda)$ is given by $\psi_k(\cdot,\lambda) = (L^\lambda_{0,k})^{-1}(k^2 V_0Q\phi_k(\cdot,\lambda))|_{[0,\infty)}$. Testing the differential equation in \eqref{psi_k_def} with $\phi_k$ and noting that $L^\lambda_{0,k}\phi_k =0$ on $(0,\infty)$ we find 
\begin{align*}
\langle L^\lambda_{0,k}\psi_k,\phi_k\rangle_{L^2(0,\infty)} &= (-\psi_k'\phi_k+\psi_k\phi_k')\big|_0^\infty +\langle \psi_k,L^\lambda_{0,k}\phi_k\rangle_{L^2(0,\infty)} = \psi_k'(0_+;\lambda) \label{psi_k} = O(k^2),
\end{align*} 
as claimed, in view of $\langle L^\lambda_{0,k}\psi_k,\phi_k\rangle_{L^2(0,\infty)}=\int_0^\infty k^2 V_0 \phi_k^2\,dy=O(k^2)$ because $V_0\in L^\infty(\R)$ and $(\tilde L3)$.

(ii) The mapping properties of $G$ follow from Remark~\ref{algebra_and_mapping}. The differentiability of the cubic nonlinearity $a*a*a$ with respect to $a$ is also a straightforward property of the Banach algebra property of $h_\sharp^s(\R)$. The differentiability property of $G$ with respect to  $\lambda$ follows from differentiability of $\lambda \mapsto \phi'_k(0_+;\lambda)$ as given in (i). As in Lemma~\ref{lem:prop_F} the Fredholm property of $D_a(G(0,\lambda))$ is satisfied since it is a compact perturbation of the identity and the characterization of the kernel of can be seen in a similar way using Lemma~\ref{self_adjoint}.

(iii) Note that $\frac{d}{d\lambda} \tilde A_\lambda^k=\frac{d}{d\lambda} A_\lambda^k = 2\psi_k'(0;\lambda)$. Since $\psi_k'(0;\lambda)=O(k^2)$ by (i) we have the mapping property $B:h_\sharp^{s+2}(\R)\to h_\sharp^s(\R)$. 
 \end{proof}

We are now in a position to apply the Crandall--Rabinowitz theorem for $G(a,\lambda):h_\sharp^s(\R) \times I_{\lambda_*}\to X_s$ for $s\geq \frac{5}{2}$ in order to proof Theorem \ref{main2} provided that the \emph{transversality condition} in \eqref{eq:t2} is satisfied.

\medspace

\textbf{Proof of Theorem \ref{main2}.}
  % \begin{proof}[Proof of Theorem \ref{main2}]
   	The existence result follows from the Crandall--Rabinowitz theorem applied to $G(a,\lambda)=0$. Successfully applied, it  provides an interval $I_{\lambda_*}\subset \R$ containing $\lambda_*$, and a smooth curve through $(0,\lambda_*)$ of the form
   	\[
   	\{(a(\e),\lambda(\e))\mid |\e|<\e_0\}\subset h^s(\R)\times I_{\lambda_*}
   	\] 
   	of nontrivial solutions of \eqref{eq:A} with $\lambda(0)=\lambda_*$ and $D_\e a(0)=e^{k_*}$. The curve $(a(\e),\lambda(\e))\subset h^s(\R)\times I_{\lambda_*}$ then translates via $\Phi(\e)(x,y)= \sum_{k\in\N} a_k(\e)\phi_k(y;\lambda)\sin(kx)$ and by Lemma~\ref{lem:FA} into the curve $\{(\Phi(\e),\lambda(\e))\mid |\e|<\e_0\}\subset X_s\times I_{\lambda_*}$ of nontrivial solutions of \eqref{eq:traveling_wave} with the stated property. The Crandall--Rabinowitz theorem requires that the linearization 
   	\[
   	D_aG(0,\lambda_*)=\Id -\tilde A_{\lambda_*}^{-1}P^*:h_\sharp^s(\R)\to h_\sharp^s(\R)
   	\]
   	is a Fredholm operator of index zero with $\dim \ker D_aG(0,\lambda_*)=1$ and the transversality condition
\begin{equation}\label{eq:trans}
D_{a\lambda}^2G(0,\lambda_*)e^{k_*} \not\in\range D_aG(0,\lambda_*)
\end{equation}
is satisfied. 
The Fredholm property is already shown in Lemma \ref{lem:prop_G} (ii) and the kernel of $D_aG(0,\lambda_*)$ is one dimensional and spanned by $e^{k_*}$, that is
\[
\ker D_aG(0,\omega_*) =\spa \{e^{k_*}\}.
\]
Concerning the transversality condition \eqref{eq:trans}, assume on the contrary that there exists $b\in h^s(\R)$ such that
\[
D_{a\lambda}^2G(0,\lambda_*)e^{k_*} =D_aG(0,\lambda_*)b.
\]
Then,
\[
\langle D_{a\lambda}^2G(0,\lambda_*)e^{k_*} ,e^{k_*}\rangle_{l^2(\R)}= \langle D_aG(0,\lambda_*)b,e^{k_*}\rangle_{l^2(\R)}.
\]
Using the formulas from Lemma~\ref{lem:prop_G} (ii) and (iii) and the fact that $\tilde A_{\lambda_*}^{-1}P^*=P^*$ together with the symmetry of $\tilde A_{\lambda_*}^{-1}$ we obtain that
\begin{align}\label{transverse}
	\begin{split}
2\psi_{k_*}'(0;\lambda_*)&=\langle B e^{k_*} ,e^{k_*}\rangle_{l^2}=\langle D_{a\lambda}^2G(0,\lambda_*)e^{k_*} ,e^{k_*}\rangle_{l^2(\R)}= \langle D_aG(0,\lambda_*)b,e^{k_*}\rangle_{l^2(\R)}\\
&=\langle b-P^*b, e^{k_*}\rangle_{l^2(\R)}=0. 
\end{split}
\end{align}
But due to Lemma \ref{lem:prop_G} (i)  this is a contradiction to \eqref{eq:t2}.
%\end{proof}

\medspace
        	     
   Similarly as in the previous section, we determine the bifurcation formulas. The Fr\'echet derivatives of $G$ with respect to $a$ are given by
   \begin{align*}
   D_aG(a,\lambda)e^{k_*}&=e^{k_*}-\tilde A_\lambda^{-1}\left(\frac{3}{4}\gamma M(a*a*e^{k_*})+ e^{k_*}\right)\\
   D_{aa}^2G(a,\lambda)[e^{k_*},e^{k_*}]&=-\frac{3}{2}\gamma \tilde A_\lambda^{-1}\left(M(a*e^{k_*}*e^{k_*})\right)\\
   D_{aaa}^3G(a,\lambda)[e^{k_*},e^{k_*},e^{k_*}]&=-\frac{3}{2}\gamma \tilde A_\lambda^{-1}\left(M(e^{k_*}*e^{k_*}*e^{k_*})\right)
   \end{align*}
   where $M$ is defined as in \eqref{def_M}.
   
   \begin{prop}\label{prop:Abf} 
   	Let $\{(a(\e),\omega(\e))\mid |\e|<\e_0\}\subset h^s(\R)\times I_{\lambda_*}$ be the local bifurcation curve found in Theorem~\ref{main2} corresponding to the bifurcation point $(0,\lambda_*)$. Then
   	\[
   	\dot \lambda(0)=0\qquad \mbox{and}\qquad 	\ddot \lambda(0)=-\frac{3\gamma}{4\int_0^\infty V_0(y) \phi_{k_*}^2\,dy}.
   	\]
   \end{prop}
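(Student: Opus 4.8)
The plan is to mirror the proof of Proposition~\ref{prop:bf}, transplanting the Lyapunov--Schmidt bifurcation formulas from the function-space setting to the sequence-space problem $G(a,\lambda)=0$. Since the nonlinearity $n(a)_k=\frac{\gamma k^2}{4}(a*a*a)_k$ is cubic, the map $G$ is cubic in $a$ and hence $D^2_{aa}G(0,\lambda_*)=0$. The standard first-order reduction formula then forces $\dot\lambda(0)=0$ immediately, exactly as in the regular case. For the second derivative I would invoke the analogue of \eqref{eq:D}, namely
\[
\ddot\lambda(0)=-\frac{1}{3}\frac{\langle D^3_{aaa}\eta(0,\lambda_*)[e^{k_*},e^{k_*},e^{k_*}],e^{k_*}\rangle_{l^2(\R)}}{\langle D^2_{a\lambda}G(0,\lambda_*)e^{k_*},e^{k_*}\rangle_{l^2(\R)}},
\]
and use $D^2_{aa}G(0,\lambda_*)=0$ once more in the Lyapunov--Schmidt expansion of the reduced third derivative, so that $D^3_{aaa}\eta(0,\lambda_*)=P^*D^3_{aaa}G(0,\lambda_*)$ (the mixed term involving $D^2_{aa}G(0,\lambda_*)$ drops out).

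Next I would evaluate the two pairings separately. For the denominator I reuse the transversality computation \eqref{transverse}, which already identifies $\langle D^2_{a\lambda}G(0,\lambda_*)e^{k_*},e^{k_*}\rangle_{l^2(\R)}$ with a multiple of $\psi_{k_*}'(0;\lambda_*)$, and then insert the formula $\psi_{k_*}'(0;\lambda_*)=k_*^2\int_0^\infty V_0\phi_{k_*}^2\,dy$ from Lemma~\ref{lem:prop_G}(i); this is where the integral $\int_0^\infty V_0\phi_{k_*}^2\,dy$ in the claimed formula originates. For the numerator I use the explicit expression $D^3_{aaa}G(0,\lambda)[e^{k_*},e^{k_*},e^{k_*}]=-\frac{3}{2}\gamma\tilde A_\lambda^{-1}M(e^{k_*}*e^{k_*}*e^{k_*})$. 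Since $\tilde A_{\lambda_*}e^{k_*}=e^{k_*}$ and $\tilde A_{\lambda_*}^{-1}$ is self-adjoint, moving $\tilde A_{\lambda_*}^{-1}$ onto the second factor collapses the pairing to $-\frac{3}{2}\gamma\langle M(e^{k_*}*e^{k_*}*e^{k_*}),e^{k_*}\rangle_{l^2(\R)}$.

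The computational heart, and the step I expect to require the most care, is the explicit evaluation of the triple convolution $e^{k_*}*e^{k_*}*e^{k_*}$. Because $e^{k_*}$ is supported only at $\pm k_*$, the convolution lives on the frequencies $\pm k_*$ and $\pm 3k_*$, but only the $\pm k_*$ entries survive the pairing against $e^{k_*}$, while $M$ supplies the factor $k_*^2$. Carrying this out --- and tracking the normalization of $e^{k_*}$ together with the factor $\frac{1}{4}$ arising from the Fourier coefficients of a cube, cf.\ Lemma~\ref{A1} --- yields a numerator proportional to $\gamma k_*^2$. Dividing by the denominator, which is likewise proportional to $k_*^2\int_0^\infty V_0\phi_{k_*}^2\,dy$, the powers of $k_*$ cancel and, after collecting the scalar constants, one arrives at $\ddot\lambda(0)=-\frac{3\gamma}{4\int_0^\infty V_0(y)\phi_{k_*}^2\,dy}$. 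The main obstacle is thus essentially bookkeeping: ensuring that the various normalizations (of $e^{k_*}$, of the projection $P^*$, and of the cubic Fourier factor) are handled consistently so that the final numerical constant is exactly $\frac{3}{4}$.
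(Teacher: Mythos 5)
Your proposal is correct and follows essentially the same route as the paper's proof: the cubic structure of $G$ gives $\dot\lambda(0)=0$, the denominator is identified through the transversality computation \eqref{transverse} together with $\psi_{k_*}'(0;\lambda_*)=k_*^2\int_0^\infty V_0\,\phi_{k_*}^2\,dy$ from Lemma~\ref{lem:prop_G}(i), and the numerator collapses via $\tilde A_{\lambda_*}^{-1}e^{k_*}=e^{k_*}$ and self-adjointness to the pairing with $M(e^{k_*}*e^{k_*}*e^{k_*})$. The one step you defer as ``bookkeeping'' --- the explicit value $(e^{k_*}*e^{k_*}*e^{k_*})_{k_*}=-3$ of the triple convolution --- is precisely what the paper supplies as Lemma~\ref{A2}, so your argument matches the paper's proof step for step.
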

   
   \begin{proof}
   	The proof follows essentially the lines of the proof of Proposition \ref{prop:bf}. We obtain that $\dot \lambda(0)=0$, which is due to the cubic character of the nonlinearity and
   	\[
   	\ddot \lambda(0)=-\frac{1}{3}\frac{\langle D_{aaa}^3G(0,\lambda_*)[e^{k_*},e^{k_*},e^{k_*}],e^{k_*}\rangle}{\langle D_{a\lambda}^2G(0,\lambda_*)e^{k_*},e^{k_*}\rangle}.
   	\]
   	Due to Lemma \ref{lem:prop_G} (i) and \eqref{transverse} the denominator is given by
   	\[
   	\langle D_{a\lambda}^2G(0,\lambda_*)e^{k_*},e^{k_*}\rangle = 2\int_0^\infty k^2 V_0(y) \psi_{k_*}^2\,dy,
   	\]
   	and the numerator reads
   	\[
   	\langle D_{aaa}^3G(0,\lambda_*)[e^{k_*},e^{k_*},e^{k_*}],e^{k_*}\rangle=-\frac{3}{2}\gamma k_*^2 (e^{k_*}*e^{k_*}*e^{k_*})_{k_*}.
   	\]
   	 Since $(e^{k_*}*e^{k_*}*e^{k_*})_{k_*}=-3$, as shown in Lemma~\ref{A2}, the statement follows.
   \end{proof}

   \bigskip
   
\section{Examples for distributional $\Gamma$} \label{S:distrib}

In what follows we prove Corollary~\ref{cor:p1_distrib} and Corollary \ref{cor:p2_distrib}, which state the existence of traveling waves for \eqref{eq:Q_trav} in the specific cases, when the potentials are given as in (P1) and (P2), respectively.
   
 \subsection{(P1) $V$ a $\delta$-potential on a positive background}
 \label{Ss:P1_distrib}
 We consider the particular case when $V_0=1$, $W=0$ so that we have a positive constant background potential with a multiple of a delta potential on top, i.e.,
 \[
	V(\lambda,y)=\lambda + \alpha\delta_0(y). 
\]
We verify the conditions $(\tilde L0)-(\tilde L3)$ of Theorem \ref{main2}; thereby proving part one of Corollary \ref{cor:p1_distrib}. 
Let us fix a wavenumber $k_*\in \N$ and a value $\lambda_*<1$.  We determine $\alpha>0$ from 
\begin{equation*}\label{eq:bifurcation_condition_2}
\alpha = \frac{2\sqrt{1-\lambda_*}}{k_*}.
\end{equation*}

Notice that the transversality condition and $(\tilde L0)$ are trivially satisfied. Moreover, the validity of $(\tilde L1)$ and $(\tilde L3)$ follow immediately from Remark~\ref{rem:pos}, since $1-\lambda V_0-W=1-\lambda>0$.  Condition $(\tilde L2)$ is exactly the same as $(L2)$ since our operator $L_\lambda^k$ is the same as the one considered in Corollary~\ref{cor:p1} of Case (P1). Since the choice of $\alpha$, $k_*$, $\lambda_*$ is the same as in Corollary~\ref{cor:p1} condition $(\tilde L1)$ holds and we are finished with treating this example.

%{\color{red} Concerning $(\tilde L2)$, the question whether $0\in \sigma(L_\lambda^k)$ is a simple eigenvalue or $0\in \rho(L_\lambda^k)$ reduces to the study of 
%\[
%	-\phi^{\prime\prime}_k(y;\lambda)+k^2(1-\lambda)\phi_k(y;\lambda)=0 \mbox{ on }(0,\infty) \mbox{ and } 2\phi_k^\prime(0_+,\lambda)+k^2\alpha =0.
%\]
%The solution $\phi_k(\cdot;\lambda)$ of the above equation satisfying $\phi_k(0,\lambda)=1$ and decaying at infinity is given by
%\[
%\phi_k(y;\lambda)=e^{-k\sqrt{1-\lambda}y}, y\in [0,\infty) \mbox{ with } -2k\sqrt{1-\lambda}+k^2\alpha=0.
%\]
%Thus, the zero-eigenvalue condition holds if and only if a pair $(k_*,\lambda_*)$ satisfies \eqref{eq:bifurcation_condition_2}. For all other values of $(k,\lambda)\in \N\times (-\infty,1)$ the operator $L_\lambda^k$ is invertible provided $\lambda$ belongs to a sufficiently small interval $I_ {\lambda_*}$ around $\lambda_*$. Thus, $(\tilde L2)$ holds. 
%%Finally, since $\|\phi_k(\cdot;\lambda)\|_{L^2(0,\infty)}^2 = (2k\sqrt{1-\lambda})^{-1}\leq C$ for all $k\in \N$ and $\lambda\in I_{\lambda_*}$ we also have the validity of $(\tilde L3)$. 
%}

Now, Corollary~\ref{cor:p1_distrib} follows from Theorem \ref{main2} and the  bifurcation formulas are an immediate consequence of Proposition~\ref{prop:Abf}.

\medskip

\subsection{(P2) $V$ a $\delta$-potential on a step background} 
\label{Ss:P2_distrib}
Now, we consider the case when $V_0=\textbf{1}_{|y|\geq b}$, $W=\beta \textbf{1}_{|y|<b}$ for some $b>0$ so that the potential $V$ is given by
\[
V(\lambda,y)=\lambda \textbf{1}_{|y|\geq b} + \beta \textbf{1}_{|y|<b} + \alpha \delta_0(y),
\]
Again we verify the conditions $(\tilde L0)-(\tilde L3)$ of Theorem \ref{main2}; thereby proving Corollary~\ref{cor:p2_distrib}. 
First we fix a wavenumber $k_*\in \N$ and a value $\lambda_*<1$.  According to Corollary~\ref{cor:p2_distrib} we have to distinguish between the case $\beta<1$, $\beta>1$, and $\beta=1$. Notice that the transversality condition and $(\tilde L0)$ are trivially satisfied for all $\beta \in \R$. 

\medskip

 Let us begin with the case $\beta<1$. The validity of $(\tilde L1)$ and $(\tilde L3)$ follow immediately from Remark~\ref{rem:pos}, since $1-\lambda V_0-W=(1-\lambda)\textbf{1}_{|y|\geq b}+(1-\beta\textbf{1}_{|y|<b})>0$. It remains to consider $(\tilde L2)$. But again the operator $L_\lambda^k$ is the same as the one considered in Corollary~\ref{cor:p1} of Case (P1) and the choice of $\alpha$ in Corollary~\ref{cor:p2_distrib} is exactly the same as in Corollary~\ref{cor:p2} of Case (P1). Hence $(\tilde L1)$ holds and this example is complete.

\medskip
 Next we consider the case $\beta>1$. Here we have made the choices 
\begin{equation*}\label{eq:con}
	\sqrt{\beta-1}b =\pi
\end{equation*}
and
\begin{equation*}\label{eq:bcon}
\alpha =\frac{2\sqrt{1-\lambda_*}}{k_*}.
\end{equation*}
We are left to verify $(\tilde L1)-(\tilde L3)$ of Theorem \ref{main2}. For $(\tilde L1)$ we need to consider the operator $L_{0,\lambda}^k=-\frac{d^2}{dy^2}+k^2(1-\lambda  \textbf{1}_{|y|\geq b} - \beta \textbf{1}_{|y|<b} ):H^2(\R)\to L^2(\R) $ which is self-adjoint with $\sigma_{ess}(L_{0,\lambda}^k)\subset [k^2(1-\lambda),\infty)$. Thus $0\in \rho (L_{0,\lambda}^k)$ if and only if $L_{0,\lambda}^k\phi=0$ for some $\phi \in H^2(\R)$ implies that $\phi=0$. In other words: we need to rule out that $L_{0,\lambda}^k$ has a zero eigenvalue. This can be seen from Lemma~\ref{eigenvalue_condition} in the Appendix if we set $\alpha=0$ (no delta potential in the equation) and $\mu=0$, i.e., $\tilde\lambda=\lambda$ and $\tilde\beta=\beta$. Moreover, we need to make the obvious changes $\sqrt{1-\beta}=\I\sqrt{\beta-1}$ and $\sinh(\I x)=\I\sin(x)$, $\cosh(\I x)=\cos(x)$. Following the ansatz \eqref{ansatz_ef} for the eigenfunction we obtain $c_0=d_0$ and $c_1=d_1$ due to the $C^1$-matching at $x=0$. Moreover, the choice of $\sqrt{\beta-1}b =\pi$ results in the invertible matrices 
\begin{equation*}
M_\pm = \begin{pmatrix} 
0  &  -e^{-k\sqrt{1-\tilde\lambda}b}\\
\sqrt{1-\tilde\beta}(-1)^k & \pm\sqrt{1-\tilde\lambda}e^{-k\sqrt{1-\tilde\lambda}b}
\end{pmatrix}.
\end{equation*}
Hence the conclusion $c_1=-d_1$ from Lemma~\ref{eigenvalue_condition} holds and leads to $c_1=d_1=0$. An inspection of the $C^1$-compatibility at $y=\pm b$ then yields $c_2=d_2=c_0=d_0=0$. Therefore, there is no zero-eigenvalue of $L_{0,\lambda}^k$ for any $k\in\N$ and any $\lambda\in (-\infty,1)$ and $(\tilde L1)$ holds.

\medskip

Concerning $(\tilde L2)$ we need to study a zero-eigenvalue of $L_\lambda^k$. The answer is again given by Lemma~\ref{eigenvalue_condition} in the Appendix since we already know the invertibility of the matrices $M_\pm$. Hence the eigenvalue condition is given by \eqref{this_makes_an_eigenvalue} with the obvious changes from the hyperbolic functions to the trigonometric function and reads
$$
\frac{k\alpha}{\sqrt{\beta-1}} = \frac{-\sqrt{\beta-1}\sin(k\sqrt{\beta-1}b)+\sqrt{1-\lambda}\cos(k\sqrt{\beta-1}b)}{\sqrt{\beta-1}\cos(k\sqrt{\beta-1}b)+\sqrt{1-\lambda}\sin(k\sqrt{\beta-1}b)}.
$$
In view of $\sqrt{\beta-1}b =\pi$ this reduces to 
$$
\alpha = \frac{2\sqrt{1-\lambda}}{k}
$$
and hence $L_\lambda^k$ has a zero-eigenvalue if and only if $k=k_*$ and $\lambda=\lambda_*$. Thus $(\tilde L2)$ holds. 
Finally, in order to verify $(\tilde L3)$, we compute the function $\phi_k$ which solve $L_\lambda^k\phi_k=0$ on $(0,\infty)$ with $\phi_k(0)=1$. From Lemma~\ref{eigenvalue_condition} we obtain 
\begin{equation*}
	\begin{cases}
		\phi_k(y,\lambda)=\cos(k\sqrt{\beta-1}y)+\I c_1\sin(k\sqrt{\beta-1}y),&\qquad y\in [0,b],\\
		\phi_k(y,\lambda)=c_2e^{-k\sqrt{1-\lambda}y},&\qquad y\geq b\\
	\end{cases}
\end{equation*}
with $c_1 = \frac{-\sqrt{1-\lambda}}{\I\sqrt{\beta-1}}$ and $c_2=e^{k\sqrt{1-\lambda}b}(-1)^k$. Computing the $L^2$-norm of $\phi_k$ we find that
 \begin{align*}
 \frac{1}{2}	\|\phi_k(\cdot,\lambda)\|_{L^2(\R)}^2 &= \int_0^b \left(\cos(k\sqrt{\beta-1}y)-\frac{\sqrt{1-\lambda}}{\sqrt{\beta-1}}\sin(k\sqrt{\beta-1}y)\right)^2\,dy + \int_b^\infty e^{2k\sqrt{1-\lambda}(b-y)}\,dy\\
 &=\frac{1}{2k\sqrt{1-\lambda}}+\frac{1}{2}\left(\frac{1-\lambda}{\beta-1}+1\right)b \leq C\left(1+\frac{1}{k}\right),
 	\end{align*}
 where the constant $C>0$ is independent of $k$  and can be chosen uniformly for $\lambda $ sufficiently close to $\lambda_*$. This shows the validity of $(\tilde L3)$. 
 
 \medskip
 
The last case to be considered is $\beta=1$. Also here, we are left to verify conditions $(\tilde L1)$, $(\tilde L2)$, and $(\tilde L3)$. First we find that in this case with $\tilde\lambda=\lambda$ and $\tilde \beta=\beta =1$ condition \eqref{this_makes_an_eigenvalue} is replaced by  
\begin{equation} \label{case_beta=1}
\frac{k\alpha}{2} = \frac{\sqrt{1-\lambda}}{1+\sqrt{1-\lambda}kb},
\end{equation}
which follows from a suitable adaptation of Lemma~\ref{eigenvalue_condition}. A zero eigenvalue of $L_{0,\lambda}^k$ correspond to values $k, \lambda$ satisfying \eqref{case_beta=1} with $\alpha=0$ which is impossible. Since $\sigma_{ess}(L_{0,\lambda}^k)=[k^2(1-\lambda),\infty)$ this shows that $(\tilde L1)$ holds. If we recall the definition of $\alpha$, i.e.,
$$
\alpha = \frac{2\sqrt{1-\lambda_*}}{k_*(1+\sqrt{1-\lambda_*}k_*b)}
$$
and compare with the $0$-eigenvalue condition \eqref{case_beta=1} we see that this ensures that $0$ is a (simple) eigenvalue of $L_\lambda^k$ if and only if $\lambda=\lambda_*$ and $k=k_*$. Hence, $(\tilde L2)$ holds. To see $(\tilde L3)$ we compute (also with the help of an adaptation of Lemma~\ref{eigenvalue_condition}) that the functions $\phi_k$ solving $L_\lambda^k\phi_k=0$ on $(0,\infty)$ with $\phi_k(0)=1$ are given by 
\begin{equation*}
	\begin{cases}
		\phi_k(y,\lambda)=1+c_1 y,&\qquad y\in [0,b],\\
		\phi_k(y,\lambda)=c_2e^{-k\sqrt{1-\lambda}y},&\qquad y\geq b\\
	\end{cases}
\end{equation*}
with $c_1 = \frac{-\sqrt{1-\lambda}k}{kb\sqrt{1-\lambda}+1}$ and $c_2=\frac{e^{k\sqrt{1-\lambda}b}}{kb\sqrt{1-\lambda}+1}$. From this we directly calculate that $\|\phi_k(\cdot,\lambda)\|_{L^2(\R)}^2=O(1)$ as $k\to \infty$ uniformly for $\lambda $ sufficiently close to $\lambda_*$. Hence, $(\tilde L3)$ holds.

\medskip

Now, Corollary \ref{cor:p2_distrib} follows from Theorem~\ref{main2}.

\bigskip

\appendix

\section{Auxiliary results}

\begin{lem}\label{A1} 
Let $A(x)=\sum_{k\in \N}a_k\sin(kx)$, then 
\[
A^3(x)=-\frac{1}{4}\sum_{k\in \N}(a*a*a)_k\sin(kx),
\]
where $a=(a_k)_{k\in\Z}$ is an infinite sequence with $a_k=-a_{-k}$ for all $k\in\Z$. The notation $(a*a*a)_k$ is used to denote the $k$-th entry in the sequence obtained by convolution $a*a*a$.
\end{lem}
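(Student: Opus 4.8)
The plan is to pass to complex exponentials, multiply the resulting two-sided series, and read off the coefficients via convolution. First I would record that the oddness $a_k=-a_{-k}$ forces $a_0=0$ and lets me rewrite the sine series as a full exponential series over $\Z$. Writing $\sin(kx)=\frac{e^{\I kx}-e^{-\I kx}}{2\I}$, splitting the sum, and relabelling the negative-index terms by $k\mapsto -k$ together with $a_{-k}=-a_k$, the two halves recombine into
$$
A(x)=\sum_{k\in\N}a_k\sin(kx)=\sum_{k\in\Z}\frac{a_k}{2\I}e^{\I kx},
$$
so that $A$ has complex Fourier coefficients $c_k:=\frac{a_k}{2\I}$.

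Next I would use that the Fourier coefficients of a product are the convolution of the Fourier coefficients; applied twice, the coefficients of $A^3$ are the triple convolution $(c*c*c)_k$. By multilinearity of convolution, $(c*c*c)_k=(2\I)^{-3}(a*a*a)_k$, and since $(2\I)^3=-8\I$ we have $(2\I)^{-3}=\frac{\I}{8}$. Hence
$$
A^3(x)=\sum_{k\in\Z}\frac{\I}{8}(a*a*a)_k\,e^{\I kx}.
$$

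Then I would verify that $b:=a*a*a$ is again odd. Substituting $i\mapsto-i$, $j\mapsto-j$, $l\mapsto-l$ in $b_{-k}=\sum_{i+j+l=-k}a_ia_ja_l$ and using $a_{-i}=-a_i$ produces an overall factor $(-1)^3=-1$, so $b_{-k}=-b_k$ and in particular $b_0=0$. This oddness lets me reverse the first step: for any odd sequence $b$ one has $\sum_{k\in\Z}\frac{b_k}{2\I}e^{\I kx}=\sum_{k\in\N}b_k\sin(kx)$. Matching $\frac{\I}{8}b_k=\frac{\beta_k}{2\I}$ gives $\beta_k=\frac{\I}{8}\cdot 2\I\cdot b_k=-\frac14 b_k$, and therefore
$$
A^3(x)=-\frac14\sum_{k\in\N}(a*a*a)_k\sin(kx),
$$
which is the claim.

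The computation is elementary; the only point requiring a word of care is the interchange of summations implicit in multiplying the three series and in regrouping positive and negative indices. This is legitimate because in every application the coefficient sequence is absolutely summable (indeed $a\in h^s_\sharp(\R)$ with $s\geq 1$, a Banach algebra under convolution, cf.\ Lemma~\ref{lem:algebra}), so all series converge absolutely and the rearrangements are justified.
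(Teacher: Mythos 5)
Your proof is correct and follows essentially the same route as the paper's: rewrite the sine series as a two-sided exponential series with coefficients $a_k/(2\I)$ using the oddness of $a$, identify the coefficients of $A^3$ as the triple convolution $\frac{\I}{8}(a*a*a)_k$, verify that $a*a*a$ is again odd, and convert back to a sine series to obtain the factor $-\frac{1}{4}$. The only difference is your closing remark justifying the rearrangements via absolute summability, which the paper leaves implicit; this is a harmless (indeed welcome) addition.
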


\begin{proof}
If $a$ is a sequence as above then using $a_k=-a_{-k}$ for all $k$ we find that
\begin{align*}
\sum_{k\in \Z}\left(-\frac{1}{2}\mathrm{i} a_k\right) e^{\mathrm{i}kx}=\sum_{k\in \Z }\frac{1}{2}a_k\sin(kx)=\sum_{k\in \N}a_k\sin(kx)=A(x),
\end{align*}
and
\[
A^3(x)=\sum_{k\in \Z}\frac{1}{8}\mathrm{i}(a*a*a)_ke^{\mathrm{i}kx}.
\]
We are going to show that the Fourier coefficients $(a*a*a)_k$ are odd with respect to $k$. Notice first that
\begin{align*}
(a*a*a)_k=\sum_{j\in\Z}\left(\sum_{l\in \Z}a_{k-j-l}a_l\right)a_j. 
\end{align*}
We also have that
\begin{align*}
(a*a*a)_{-k}&= \sum_{j\in\Z}\left(\sum_{l\in \Z}a_{-k-j-l}a_l\right)a_j=-\sum_{j\in\Z}\left(\sum_{l\in \Z}a_{k+j+l}a_l\right)a_j\\
& = -\sum_{j\in\Z}\left(\sum_{l\in\Z}a_{k-j-l}a_{-l}\right)a_{-j}=-\sum_{j\in\Z}\left(\sum_{l\in\Z}a_{k-j-l}a_{l}\right)a_{j}=-(a*a*a)_k.
\end{align*}
From this we deduce that
\[
A^3(x)=\sum_{k\in \Z}\frac{1}{8}\mathrm{i}(a*a*a)_ke^{\mathrm{i}kx}=-\frac{1}{8}\sum_{k\in \Z}(a*a*a)_k\sin(xk)= -\frac{1}{4}\sum_{k\in \N}(a*a*a)_k\sin(xk).
\]
\end{proof}

\begin{lem}\label{A2} 
	Let $k\in \N$ and $e^{k_*}$ be a sequence such that $e^{k_*}_k=0$ if $k\neq \pm k_*$, $e^{k_*}_{k_*}=-e^{k_*}_{-k_*}=1$. Then,
	\[
	(e^{k_*}*e^{k_*}*e^{k_*})_k=\left\{ \begin{array}{lcl}1, &\mbox{if}& k=3k_*,\\ 
	-1, &\mbox{if}& k=-3k_*,\\-3,&\mbox{if}& k= k_*,\\
	3,&\mbox{if}& k= -k_*\end{array} \right.
	\]
\end{lem}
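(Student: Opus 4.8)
The plan is to compute the triple convolution $(e^{k_*}*e^{k_*}*e^{k_*})_k$ directly from the definition. Recall that for sequences $a, b$ indexed by $\Z$ the convolution is $(a*b)_k = \sum_{j\in\Z} a_{k-j}b_j$. Since $e^{k_*}$ is supported only on $\{k_*, -k_*\}$ with $e^{k_*}_{k_*}=1$ and $e^{k_*}_{-k_*}=-1$, the computation is a finite bookkeeping exercise. First I would compute the double convolution $b := e^{k_*}*e^{k_*}$, which is supported on $\{2k_*, 0, -2k_*\}$: one finds $b_{2k_*} = e^{k_*}_{k_*}e^{k_*}_{k_*}=1$, $b_{-2k_*}=e^{k_*}_{-k_*}e^{k_*}_{-k_*}=1$, and $b_0 = e^{k_*}_{k_*}e^{k_*}_{-k_*}+e^{k_*}_{-k_*}e^{k_*}_{k_*}=-2$.

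Next I would convolve $b$ once more with $e^{k_*}$ to obtain $(b*e^{k_*})_k = \sum_{j} b_{k-j}e^{k_*}_j = b_{k-k_*}e^{k_*}_{k_*} + b_{k+k_*}e^{k_*}_{-k_*} = b_{k-k_*} - b_{k+k_*}$. Evaluating at the four relevant indices using the support of $b$: at $k=3k_*$ this gives $b_{2k_*}-b_{4k_*}=1-0=1$; at $k=-3k_*$ it gives $b_{-4k_*}-b_{-2k_*}=0-1=-1$; at $k=k_*$ it gives $b_0-b_{2k_*}=-2-1=-3$; and at $k=-k_*$ it gives $b_{-2k_*}-b_0=1-(-2)=3$. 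This matches the four stated values, and for all other $k$ both $b_{k-k_*}$ and $b_{k+k_*}$ vanish, so the convolution is zero there.

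The argument is entirely routine and there is no genuine obstacle; the only thing to be careful about is the sign convention, namely that $e^{k_*}_{-k_*}=-1$, which is what produces the alternating signs and the factor $-3$ rather than $+3$ at $k=k_*$. One could alternatively present this more slickly by noting that $e^{k_*}$ corresponds (up to a factor $\tfrac{1}{2}\mathrm i$ as in Lemma~\ref{A1}) to $\sin(k_*x)$ and using the identity $\sin^3\theta = \tfrac{3}{4}\sin\theta - \tfrac14\sin(3\theta)$ with $\theta = k_*x$; reading off Fourier coefficients then reproduces the table. I would, however, favour the direct convolution computation since it is self-contained and avoids re-deriving the correspondence.
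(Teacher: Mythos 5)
Your proposal is correct and follows essentially the same route as the paper: a direct evaluation of the convolution using the two-point support of $e^{k_*}$, with your intermediate sequence $b=e^{k_*}*e^{k_*}$ satisfying $b_{k-k_*}-b_{k+k_*}=e^{k_*}_{k-2k_*}-2e^{k_*}_{k}+e^{k_*}_{k+2k_*}$, which is exactly the formula the paper derives by collapsing the double sum in one step. The sign bookkeeping at $e^{k_*}_{-k_*}=-1$ is handled correctly, so nothing is missing.
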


\begin{proof}
The convolution $e^{k_*}*e^{k_*}*e^{k_*}$ is given by
\begin{align*}
	(e^{k_*}*e^{k_*}*e^{k_*})_k&=\sum_{j\in\Z}\left(\sum_{l\in \Z}e^{k_*}_{k-j-l}e^{k_*}_l\right)e^{k_*}_j=\sum_{l\in \Z} e^{k_*}_{k-k_*-l}e^{k_*}_l-\sum_{l\in\Z}e^{k_*}_{k+k_*-l}e^{k_*}_l\\
	&= e^{k_*}_{k-2k_*} -2e^{k^*}_k+e^{k_*}_{k+2k_*} 
\end{align*}
and the claim follows.
\end{proof}

\begin{lem} \label{lem:regularity}
Let $a\in h^s(\R)$ for some $s\geq 0$ and define $\Phi(x,y) = \sum_{k\in\N} a_k \phi_k(y;\lambda) \sin(kx)$ for $x\in\T$ and $y\in \R$. Then 
\begin{itemize}
\item[(i)] $\Phi\in H^s(\T,L^2(\R)$
\item[(ii)] $\Phi \in H^{s-1}(\T,H^1(\R))$
\item[(iii)] $\Phi\in H^{s-2}(\T, H^2(0,\infty))$
\item[(iv)] $\Phi\in C(\R, H^{s-\frac{1}{2}}(\T))$
\item[(v)] $\Phi\in C^1(\R, H^{s-\frac{3}{2}}(\T))$
\end{itemize}
\end{lem}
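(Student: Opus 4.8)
The plan is to reduce every one of the five assertions to a single Fourier-coefficient estimate in the $x$-variable and then to feed in the bounds on $\phi_k(\cdot;\lambda)$ already recorded in Lemma~\ref{asymptotik} and $(\tilde L3)$. Throughout I would use that for a Hilbert space $H$, a real order $\sigma$, and a sine series $u(x,\cdot)=\sum_{k\in\N}c_k(\cdot)\sin(kx)$ with $c_k\in H$, one has
\begin{equation*}
\|u\|_{H^\sigma(\T;H)}^2 \eqsim \sum_{k\in\N}(1+k^2)^\sigma\|c_k\|_H^2,
\end{equation*}
which follows from $\sin(kx)=\tfrac{1}{2\I}(e^{\I kx}-e^{-\I kx})$ and the Fourier characterisation of $H^\sigma(\T;H)$ (valid for all real $\sigma$, so the negative orders $s-1,s-2,s-\tfrac12,s-\tfrac32$ cause no trouble). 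Here $c_k=a_k\phi_k(\cdot;\lambda)$. The only genuine inputs are the four bounds $\|\phi_k\|_{L^2(\R)}\lesssim1$ (from $(\tilde L3)$ together with evenness), $\|\phi_k'\|_{L^2(\R)}\lesssim k$, $\|\phi_k\|_{L^\infty(\R)}\lesssim k^{1/2}$, $\|\phi_k'\|_{L^\infty(\R)}\lesssim k^{3/2}$ (the last three from \eqref{phi_k_estimates}), together with the pointwise identity $\phi_k''=k^2(1-\lambda V_0-W)\phi_k$ on $(0,\infty)$ coming from \eqref{eq:linear}.

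For (i)--(iii) I would just carry out the exponent bookkeeping. In (i), $\|\phi_k\|_{L^2(\R)}^2\lesssim1$ gives $\sum_k(1+k^2)^s|a_k|^2\|\phi_k\|_{L^2(\R)}^2\lesssim\|a\|_{h^s(\R)}^2$. In (ii), $\|\phi_k\|_{H^1(\R)}^2\lesssim1+k^2\eqsim(1+k^2)$, so the factor $(1+k^2)$ raises the weight $(1+k^2)^{s-1}$ back to $(1+k^2)^s$ and the series is again $\lesssim\|a\|_{h^s(\R)}^2$. In (iii), the ODE identity yields $\|\phi_k''\|_{L^2(0,\infty)}\leq k^2\|1-\lambda V_0-W\|_\infty\|\phi_k\|_{L^2(0,\infty)}\lesssim k^2$, hence $\|\phi_k\|_{H^2(0,\infty)}^2\lesssim(1+k^2)^2$, which absorbs the weight $(1+k^2)^{s-2}$. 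In each case the powers match exactly; this is precisely the scaling in which \eqref{phi_k_estimates} was stated.

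For (iv) and (v) I would first establish uniform-in-$y$ membership and then continuity. Fixing $y$ and using $|\phi_k(y;\lambda)|^2\lesssim k\eqsim(1+k^2)^{1/2}$ bounds $\|\Phi(\cdot,y)\|_{H^{s-1/2}(\T)}^2\lesssim\sum_k(1+k^2)^s|a_k|^2=\|a\|_{h^s(\R)}^2$ uniformly in $y$, and likewise $|\phi_k'(y;\lambda)|^2\lesssim k^3\eqsim(1+k^2)^{3/2}$ controls $\|\partial_y\Phi(\cdot,y)\|_{H^{s-3/2}(\T)}^2$ uniformly in $y$. Continuity in (iv) then follows because the partial sums $\Phi_N(\cdot,y)=\sum_{k\leq N}a_k\phi_k(y;\lambda)\sin(kx)$ are continuous maps $\R\to H^{s-1/2}(\T)$ (each $\phi_k\in H^1(\R)\subset C(\R)$) and converge to $\Phi$ uniformly in $y$, the tail being dominated by $\sum_{k>N}(1+k^2)^s|a_k|^2\to0$ independently of $y$; a uniform limit of continuous maps is continuous. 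For (v) I would differentiate term by term: on $(0,\infty)$ each $\phi_k$ is $C^1$ (being in $H^2(0,\infty)$), the differentiated partial sums converge uniformly in $y$ in $H^{s-3/2}(\T)$ by the same tail bound, and the standard theorem on uniform convergence of derivatives identifies $\partial_y\Phi=\sum_k a_k\phi_k'(y;\lambda)\sin(kx)$ as the continuous $H^{s-3/2}(\T)$-valued derivative.

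The one step needing care is the behaviour at $y=0$ in (v). Since the $\phi_k(\cdot;\lambda)$ are even, the one-sided derivatives satisfy $\phi_k'(0_+;\lambda)=-\phi_k'(0_-;\lambda)$, so $\Phi$ carries a corner at the origin and the sharp statement is that $y\mapsto\Phi(\cdot,y)$ is $C^1$ on each closed half-line $[0,\infty)$ and $(-\infty,0]$ up to the endpoint $0$ (by evenness it suffices to treat $[0,\infty)$, the other side being the reflection). The resulting jump $\partial_y\Phi(x,0_+)-\partial_y\Phi(x,0_-)=2\sum_k a_k\phi_k'(0_+;\lambda)\sin(kx)$ is exactly the quantity prescribed by the boundary condition \eqref{eq:rb}, so the corner is intrinsic to the construction and not a defect of the estimates. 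Apart from this one-sided reading at the origin, every step is the routine bookkeeping above, and the whole lemma rests on the fact that the $k$-powers in \eqref{phi_k_estimates} are calibrated to cancel the derivative losses in $(1+k^2)^{s-1},(1+k^2)^{s-2},(1+k^2)^{s-1/2},(1+k^2)^{s-3/2}$ exactly.
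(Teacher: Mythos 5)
Your proposal is correct and follows essentially the same route as the paper: the paper's proof consists exactly of the chain of coefficient estimates you describe, bounding $\|\phi_k\|_{L^2}$, $\|\phi_k'\|_{L^2}$, $\|\phi_k\|_{L^\infty}$, $\|\phi_k'\|_{L^\infty}$ via $(\tilde L3)$ and \eqref{phi_k_estimates}, and using the ODE \eqref{eq:linear} to convert $\|\phi_k''\|_{L^2(0,\infty)}$ into $k^2\|\phi_k\|_{L^2(0,\infty)}$ in (iii). What you add goes beyond the paper in two respects: the paper never spells out why the uniform-in-$y$ coefficient bounds yield actual continuity in (iv)--(v), which your uniform-convergence-of-partial-sums argument supplies; and your one-sided reading of (v) at $y=0$ is a genuine correction to the literal statement, since the $\phi_k$ are even with $\phi_k'(0_+;\lambda)\neq 0$ (Remark~\ref{rem:existenz_decaying_mode}(iii)), so $\partial_y\Phi$ generically jumps at $y=0$ and $\Phi$ is $C^1$ only on each closed half-line $[0,\infty)$ and $(-\infty,0]$ --- which is precisely the structure the boundary condition \eqref{eq:rb} exploits, and is what the paper's $C^1(\R;H^{s-3/2}(\T))$ must tacitly mean.
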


\begin{proof}
	We verify that
\begin{align*}
\|\Phi\|_{H^s(\T,L^2(\R))}^2 & \leq C\sum_{k\in \N} a_k^2 k^{2s} \|\phi_k\|_{L^2(\R)}^2 \leq C \|a\|_{h^s(\R)}^2, \\
\|\Phi\|_{H^{s-1}(\T,H^1(\R))}^2 & \leq C\sum_{k\in \N} a_k^2 k^{2s-2} \|\phi_k'\|_{L^2(\R)}^2 \leq C \|a\|_{h^s(\R)}^2, \\
\|\Phi\|_{H^{s-2}(\T,H^2(0,\infty))}^2 & \leq C\sum_{k\in \N} a_k^2 k^{2s-4} \|\phi_k''\|_{L^2(0,\infty)}^2 \\
& \leq C(1+\|1-\lambda V_0-W\|_{L^\infty(\R)} \sum_{k\in \N} a_k^2 k^{2s} \|\phi_k\|_{L^2(0,\infty)}^2\leq C \|a\|_{h^s(\R)}^2, \\
\|\Phi\|_{C(\R,H^{s-\frac{1}{2}}(\T))}^2 & \leq C\sum_{k\in \N} a_k^2 k^{2s-1} \|\phi_k\|_{L^\infty(\R)}^2 \leq C\|a\|_{h^s(\R)}^2,\\
\|\Phi\|_{C^1(\R,H^{s-\frac{3}{2}}(\T))}^2 & \leq C\sum_{k\in \N} a_k^2 k^{2s-3} \|\phi_k'\|_{L^\infty(\R)}^2 \leq C\|a\|_{h^s(\R)}^2.
\end{align*}
\end{proof}

\begin{lem} \label{lem:algebra}
For $s\geq 1$ the space $h^s(\R)$ is a Banach algebra with respect to convolution.
\end{lem}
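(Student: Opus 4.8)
The plan is to prove directly that convolution is a continuous bilinear map $h^s(\R)\times h^s(\R)\to h^s(\R)$. Since $h^s(\R)$ is a weighted $l^2$-space, it is a Hilbert space and in particular complete, and convolution is commutative and associative; hence such a continuity estimate $\|a*b\|_{h^s(\R)}\lesssim\|a\|_{h^s(\R)}\|b\|_{h^s(\R)}$ is exactly what is needed to conclude the Banach algebra property (after rescaling to an equivalent submultiplicative norm, if one insists on constant $1$).

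First I would record the elementary weight inequality
\[
(1+|k|)^s \le 2^s\bigl[(1+|k-j|)^s+(1+|j|)^s\bigr]\qquad (k,j\in\Z),
\]
which follows from $1+|k|\le (1+|k-j|)+(1+|j|)\le 2\max\{1+|k-j|,1+|j|\}$ and $s\ge 0$. Applying it inside the convolution sum $(a*b)_k=\sum_{j\in\Z}a_{k-j}b_j$, and abbreviating $\tilde a_k:=(1+|k|)^s|a_k|$ and $\tilde b_k:=(1+|k|)^s|b_k|$ (so that $\|\tilde a\|_{l^2(\R)}=\|a\|_{h^s(\R)}$ and likewise for $b$), one obtains the pointwise bound
\[
(1+|k|)^s\,|(a*b)_k|\le 2^s\bigl[(\tilde a*|b|)_k+(|a|*\tilde b)_k\bigr].
\]
Taking the $l^2$-norm in $k$ and invoking Young's inequality for sequences, $\|f*g\|_{l^2(\R)}\le\|f\|_{l^2(\R)}\|g\|_{l^1(\R)}$, then yields
\[
\|a*b\|_{h^s(\R)}\le 2^s\bigl(\|a\|_{h^s(\R)}\|b\|_{l^1(\R)}+\|a\|_{l^1(\R)}\|b\|_{h^s(\R)}\bigr).
\]

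The final ingredient is the embedding $h^s(\R)\hookrightarrow l^1(\R)$: by Cauchy--Schwarz,
\[
\|a\|_{l^1(\R)}=\sum_{k\in\Z}(1+|k|)^{-s}\,(1+|k|)^s|a_k|\le\Bigl(\sum_{k\in\Z}(1+|k|)^{-2s}\Bigr)^{\!1/2}\|a\|_{h^s(\R)},
\]
where the prefactor is finite precisely because $2s>1$, which holds since $s\ge 1$. Combining this with the previous display gives $\|a*b\|_{h^s(\R)}\lesssim\|a\|_{h^s(\R)}\|b\|_{h^s(\R)}$, completing the argument.

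The one genuine subtlety — and the step I expect to be the main obstacle — is that the seemingly more natural estimate based on the multiplicative inequality $(1+|k|)^s\le(1+|k-j|)^s(1+|j|)^s$ fails: it leads to $\|a*b\|_{h^s(\R)}\le\|\tilde a\|_{l^2(\R)}\|\tilde b\|_{l^1(\R)}$, and $\|\tilde b\|_{l^1(\R)}=\sum_k(1+|k|)^s|b_k|$ is \emph{not} controlled by $\|b\|_{h^s(\R)}$ (a decay of order $(1+|k|)^{-s-1}$ already breaks this). The additive splitting above is what circumvents the problem, at the cost of needing the embedding $h^s(\R)\hookrightarrow l^1(\R)$, which is exactly where the hypothesis $s\ge 1$ (in fact $s>\tfrac12$) enters. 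Well-definedness of $a*b$ is then automatic, since $h^s(\R)\subset l^1(\R)$ makes the convolution of two such sequences absolutely convergent.
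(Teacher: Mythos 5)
Your proof is correct and follows essentially the same route as the paper's: the additive splitting of the weight $(1+|k|)^s\lesssim (1+|k-j|)^s+(1+|j|)^s$, Young's inequality $\|f*g\|_{l^2}\le\|f\|_{l^2}\|g\|_{l^1}$ applied to each term, and the embedding $h^s(\R)\hookrightarrow l^1(\R)$ to close the estimate. The only (harmless) differences are that you keep the weight $(1+|k|)^s$ throughout, whereas the paper works with $|k|^s$, and your Cauchy--Schwarz embedding argument shows $s>\tfrac12$ suffices while the paper routes it through $h^1(\R)$.
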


\begin{proof} In this proof we use the $l^1$-norm $\|a\|_{l^1(\R)}=\sum_{k\in\Z} |a_k|$ for a sequence $a=(a_k)_{k\in\Z}\in l^1(\R)$, i.e., the Banach space of all real sequences with finite $l^1$-norm. Due to convexity we have the inequality
$$
|k|^s \leq 2^{s-1}(|k-l|^s+|l|^s).
$$
Therefore, if $a, b \in h^s(\R)$ then 
$$
|k|^s (a*b)_k=|k|^s \left|\sum_{l\in \Z} a_{k-l}b_l\right|\leq 2^{s-1} \sum_{l\in\Z} |k-l|^s |a_{k-l}| |b_l| + |a_{k-l}| |l|^s |b_l|.
$$
Using the convolution inequality $\|\tilde a*\tilde b\|_{l^2} \leq \|\tilde a\|_{l^2} \|\tilde b\|_{l^1}$ once for $(\tilde a)_k=|k|^s |a_k|$, $(\tilde b)_k=|b_k|$ and once for $(\tilde a)_k=|a_k|$, $(\tilde b)_k = |k|^s|b_k|$ we get 
$$
\|a*b\|_{h^s(\R)} \leq 2^{s-1}(\|a\|_{h^s(\R)} \|b\|_{l^1(\R)} + \|a\|_{l^1(\R)}\|b\|_{h^s(\R)}).
$$
Finally, $a\in h^s(\R)$ implies $a\in l^1(\R)$ due to 
$$
\sum_{k\in \Z} |a_k| = \sum_{k\in\Z} |a_k| (|k|+1)\frac{1}{|k|+1} \leq C \|a\|_{h^1(\R)} \leq C \|a\|_{h^s(\R)}.
$$
\end{proof}

\begin{lem}\label{lem:spectrum} Let $L=-\frac{d^2}{dy} +q(y)$ with $q\in L^\infty(\R)$ be a self-adjoint operator on $L^2(\R)$ with domain $D(L)=H^2(\R)$. Then, for any $\alpha\in \R$, we have that $L_\alpha:=L+\alpha \delta_0$ is self-adjoint with domain $D(L_\alpha)=\{u\in H^1(\R)\cap \left(H^2(0,\infty)\cup H^2(-\infty,0)\right)\mid u^\prime(0_+)-u^\prime(0_-)=-\alpha u(0)\}$. Moreover for any $\alpha \in \R$ the following holds: 
\begin{itemize}
\item[(i)] For sufficiently large $\mu>0$ we have that $(L_\alpha+\mu)^{-1}: H^{-1}(\R) \to H^1(\R)$ is bounded.
\item[(ii)] $\sigma_{ess}(L_\alpha)=\sigma_{ess}(L)$.
\end{itemize}
\end{lem}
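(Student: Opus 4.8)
The plan is to realize $L_\alpha$ via its quadratic form, deduce self-adjointness and the domain from the KLMN theorem together with a trace inequality, obtain the resolvent bound from Lax--Milgram, and reduce the essential-spectrum statement to the fact that the resolvent difference has rank one.

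First I would set up the form. The operator $L$ is represented by the closed, bounded-below form $t_L[u]=\|u'\|_{L^2(\R)}^2+\int_\R q\,u^2\,dy$ with form domain $H^1(\R)$, and the $\delta$-potential contributes the perturbation $t_\delta[u]=\pm\alpha\,u(0)^2$ (the sign being the one that produces the jump condition recorded in the lemma). The decisive input is that point evaluation is \emph{infinitesimally} form-bounded: for $u\in H^1(\R)$ one has $u(0)^2=\int_{-\infty}^0 2uu'\,dy\le 2\|u\|_{L^2(\R)}\|u'\|_{L^2(\R)}\le \e\|u'\|_{L^2(\R)}^2+\e^{-1}\|u\|_{L^2(\R)}^2$ for every $\e>0$. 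Since $t_L[u]\ge \|u'\|_{L^2(\R)}^2-\|q\|_{\infty}\|u\|_{L^2(\R)}^2$, the form $t_\delta$ is infinitesimally $t_L$-bounded, so by the KLMN theorem the form sum $t_{L_\alpha}=t_L+t_\delta$ is closed and bounded below on $H^1(\R)$ and defines a unique self-adjoint operator $L_\alpha$. To identify $D(L_\alpha)$ I would characterise $u\in D(L_\alpha)$ by requiring $v\mapsto t_{L_\alpha}(u,v)$ to be $L^2$-bounded; integrating by parts separately over $(-\infty,0)$ and $(0,\infty)$ then forces $-u''+qu\in L^2$ on each half-line, hence $u\in H^2(-\infty,0)\cap H^2(0,\infty)$, and leaves exactly the transmission condition $u'(0_+)-u'(0_-)=-\alpha u(0)$ as the residual boundary term, while conversely any such $u$ lies in $D(L_\alpha)$.

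For part (i) the point is coercivity of the shifted form. Using the infinitesimal bound with a fixed small $\e$ (chosen relative to $|\alpha|$), for $\mu$ large one obtains $t_{L_\alpha}[u]+\mu\|u\|_{L^2(\R)}^2\ge c\|u'\|_{L^2(\R)}^2+(\mu-C)\|u\|_{L^2(\R)}^2\gtrsim\|u\|_{H^1(\R)}^2$ with $c>0$ and $C=C(\|q\|_\infty,\alpha,\e)$. The bilinear form $B(u,v)=t_{L_\alpha}(u,v)+\mu\langle u,v\rangle_{L^2(\R)}$ is then bounded and coercive on $H^1(\R)$, so by Lax--Milgram it is an isomorphism $H^1(\R)\to H^{-1}(\R)$ whose inverse is precisely the extension of $(L_\alpha+\mu)^{-1}$ to $H^{-1}(\R)$; this gives $\|(L_\alpha+\mu)^{-1}f\|_{H^1(\R)}\lesssim\|f\|_{H^{-1}(\R)}$. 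For part (ii) I would show the resolvent difference is rank one and invoke Weyl's theorem on the invariance of $\sigma_{ess}$ under compact resolvent perturbations (e.g.\ \cite{reed_simon}). Fix $\mu$ as in (i) and, for $f\in L^2(\R)$, set $u=(L_\alpha+\mu)^{-1}f$ and $w=(L+\mu)^{-1}f$. Subtracting the two weak equations yields $t_L(u-w,v)+\mu\langle u-w,v\rangle_{L^2(\R)}=\alpha\,u(0)\,v(0)$ for all $v\in H^1(\R)$, whence $u-w$ equals a scalar multiple, depending linearly and boundedly on $f$, of the fixed function $g:=(L+\mu)^{-1}\delta_0\in H^1(\R)$. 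The scalar $\alpha\bigl((L_\alpha+\mu)^{-1}f\bigr)(0)$ is $L^2$-bounded in $f$ because $(L_\alpha+\mu)^{-1}\colon L^2(\R)\to H^1(\R)$ followed by point evaluation is bounded. Hence $(L_\alpha+\mu)^{-1}-(L+\mu)^{-1}$ is a bounded rank-one, in particular compact, operator on $L^2(\R)$, and $\sigma_{ess}(L_\alpha)=\sigma_{ess}(L)$ follows.

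The main obstacle I anticipate is the bookkeeping forced by the distributional nature of $\delta_0$: it is not an $L^2$-operator, so both the domain identification and the rank-one identity must be run in the form sense, through the $H^1$--$H^{-1}$ duality and the trace inequality above, rather than by a naive second-resolvent-identity manipulation. Once that is handled carefully, the remainder is the standard KLMN/Lax--Milgram/Weyl machinery.
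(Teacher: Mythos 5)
Your proposal is correct, and on parts (i) and (ii) it follows essentially the same route as the paper: coercivity of the shifted bilinear form plus Lax--Milgram for the $H^{-1}(\R)\to H^1(\R)$ bound, and compactness of the resolvent difference plus Weyl's criterion for the equality of the essential spectra. The substantive divergences are two. First, for self-adjointness and the identification of $D(L_\alpha)$ the paper simply cites \cite{christ_stolz}, whereas you prove it from scratch via the trace inequality $u(0)^2\le\e\|u'\|_{L^2(\R)}^2+\e^{-1}\|u\|_{L^2(\R)}^2$, the KLMN theorem, and integration by parts on the two half-lines; this makes the lemma self-contained at the cost of a page of standard form-perturbation theory. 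Second, in (ii) the paper obtains compactness from the operator factorization
\[
(\lambda-L)^{-1}-(\lambda-L_\alpha)^{-1}=(\lambda-L_\alpha)^{-1}\circ(L-L_\alpha)\circ(\lambda-L)^{-1},
\]
using that $L-L_\alpha=-\alpha\delta_0:H^1(\R)\to H^{-1}(\R)$ is bounded with one-dimensional range, while you reach the same rank-one conclusion by subtracting the two weak formulations and identifying $u-w$ as a scalar multiple of $(L+\mu)^{-1}\delta_0$. These are the same second-resolvent-identity computation, written in operator form versus weak form, and both hinge on part (i) to make sense of the resolvents as maps $H^{-1}(\R)\to H^1(\R)$. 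Finally, your hedge on the sign of the form perturbation $\pm\alpha u(0)^2$ is harmless and in fact prudent: the jump condition $u'(0_+)-u'(0_-)=-\alpha u(0)$ recorded in the lemma corresponds to the perturbation $-\alpha\delta_0$ (matching the operators $L_\lambda^k$ to which the lemma is applied, where the delta enters as $-k^2\alpha\delta_0$) rather than to the literal expression $L+\alpha\delta_0$, so fixing the sign by the stated transmission condition, as you do, is exactly the right move.
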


\begin{proof} A proof for the self-adjointness of $L_\alpha$ for any $\alpha \in \R$ is given in \cite{christ_stolz}. For (i) we first note that $L_\alpha$ is a semi-bounded self-adjoint operator so that $L_\alpha+\mu$ is a positive operator for $\mu>0$ sufficiently large. Its bilinear form $b_{L_\alpha+\mu}: H^1(\R)\times H^1(\R) \to \R$ is coercive and equivalent to the standard $H^1(\R)$-inner product. Therefore, any $f\in H^{-1}(\R)$ can be represented by a unique $u\in H^1(\R)$ such that $b_{L_\alpha+\mu}(u,\phi)=f(\phi)$ for any $\phi\in H^1(\R)$ and $b_{L_\alpha+\mu}(u,u)=\|\phi\|_{H^{-1}(\R)}^2$. This proves (i).

 For (ii) we may take $\lambda<0$ sufficiently negative such that $\lambda\in \rho(L)\cap \rho(L_\alpha)\cap \R$ since both $L, L_\alpha$ are semi-bounded from below. Using (i) we may also assume $\lambda$ sufficiently negative that $(\lambda-L_\alpha)^{-1}: H^{-1}(\R) \to H^1(\R)$ is bounded. By Weyl's criterion it is sufficient to show that the operator $W_\lambda:= (\lambda-L)^{-1}-(\lambda-L_\alpha)^{-1}: L^2(\R)\to L^2(\R)$ is compact in order to prove the statement. Since 
$$
W_\lambda = (\lambda-L_\alpha)^{-1}\circ((\lambda-L_\alpha)(\lambda-L)^{-1}-\Id)= \underbrace{(\lambda-L_\alpha)^{-1}}_{H^{-1}(\R)\to H^1(\R)\subset L^2(\R)} \circ \underbrace{(L-L_\alpha)}_{H^1(\R)\to H^{-1}(\R)} \circ\underbrace{(\lambda-L)^{-1}}_{L^2(\R)\to H^1(\R)}
$$
and since $L-L_\alpha=-\alpha\delta_0: H^1(\R)\to H^{-1}(\R)$ is a bounded operator with 1-dimensional range spanned by $\delta_0$ we see that $W_\lambda$ is indeed compact. This finishes the proof. 
\end{proof}

\begin{lem}\label{eigenvalue_condition} Let $\tilde\lambda, \tilde\beta <1$. Then the eigenvalue problem \eqref{ev_problem} is solvable for $\phi\in D(L_\lambda^k)$ if and only if \eqref{this_makes_an_eigenvalue} holds. In this case the eigenspace is one-dimensional.
\end{lem}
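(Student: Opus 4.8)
The plan is to exploit the reflection symmetry $y\mapsto -y$ of the problem and reduce everything to a half-line computation. First I would note that the potential $1-\tilde\lambda\textbf{1}_{|y|\geq b}-\tilde\beta\textbf{1}_{|y|<b}$ is even, and that the transmission condition $\phi'(0_+)-\phi'(0_-)+k^2\alpha\phi(0)=0$ is invariant under $\phi\mapsto\phi(-\cdot)$; hence \eqref{ev_problem} is reflection-symmetric. Since $1-\tilde\lambda>0$, the outer equation $-\phi''+k^2(1-\tilde\lambda)\phi=0$ has solutions $e^{\pm k\sqrt{1-\tilde\lambda}y}$, so on each half-line the two-dimensional solution space of the ODE (with automatic $C^1$-matching at the jump $y=\pm b$, as there is no delta there) contains exactly a one-dimensional subspace of $L^2$-solutions. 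Let $\phi_+$ span the solutions on $(0,\infty)$ decaying at $+\infty$, and set $\phi_-(y):=\phi_+(-y)$, which spans the decaying solutions on $(-\infty,0)$. Any $L^2$-solution of \eqref{ev_problem} is then of the form $\phi=c\,\phi_+$ on $(0,\infty)$ and $\phi=d\,\phi_-$ on $(-\infty,0)$.

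Second, I would make the ansatz explicit. Writing $p:=k\sqrt{1-\tilde\beta}$ and $q:=k\sqrt{1-\tilde\lambda}$, set $\phi_+(y)=c_0\cosh(py)+c_1\sinh(py)$ on $[0,b]$ and $\phi_+(y)=c_2e^{-qy}$ on $[b,\infty)$. The $C^1$-matching at $y=b$ gives two linear equations; eliminating $c_2$ yields the single relation $\sqrt{1-\tilde\lambda}\big(c_0\cosh(pb)+c_1\sinh(pb)\big)+\sqrt{1-\tilde\beta}\big(c_0\sinh(pb)+c_1\cosh(pb)\big)=0$. This fixes the ratio $c_1/c_0$ and shows $c_0\neq 0$ (as $c_0=0$ forces $c_1=c_2=0$, hence $\phi_+\equiv 0$); in particular $\phi_+(0)=c_0\neq 0$.

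Third, I would rule out odd solutions and thereby pin down the eigenspace. Continuity of $\phi$ at $0$ reads $c\,\phi_+(0)=d\,\phi_-(0)=d\,\phi_+(0)$, and since $\phi_+(0)\neq 0$ this forces $c=d$; thus every $L^2$-solution is even, a scalar multiple of the even extension of $\phi_+$. (An odd solution would require $\phi_+(0)=0$, equivalently $\sqrt{1-\tilde\beta}\coth(pb)=-\sqrt{1-\tilde\lambda}$, which is impossible for $\tilde\beta,\tilde\lambda<1$ because the left side is positive and the right side negative.) The remaining transmission condition then reduces, via $\phi'(0_-)=-\phi_+'(0_+)$, to the scalar equation $2\phi_+'(0_+)+k^2\alpha\phi_+(0)=0$, that is $c_1/c_0=-\tfrac{k\alpha}{2\sqrt{1-\tilde\beta}}$.

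Finally, equating the two expressions for $c_1/c_0$---the one from the matching at $b$ and the one from the transmission condition at $0$---gives precisely \eqref{this_makes_an_eigenvalue}. Hence a nontrivial $L^2$-solution exists if and only if \eqref{this_makes_an_eigenvalue} holds, and in that case it is determined up to the single scalar $c_0$, so the eigenspace is one-dimensional. The computation is otherwise routine; the only point demanding care---rather than a real obstacle---is the exclusion of odd solutions (equivalently $\phi_+(0)\neq 0$), since it is exactly this fact that makes continuity at $0$ force evenness and keeps the eigenspace one-dimensional.
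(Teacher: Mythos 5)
Your proof is correct, and the final computation coincides with the paper's: the same ratio $c_1/c_0$ from the $C^1$-matching at $y=b$, the same scalar transmission relation at $y=0$, and their equality is exactly \eqref{this_makes_an_eigenvalue}. The difference is organizational rather than computational. The paper works with the full four-interval ansatz \eqref{ansatz_ef} with six coefficients, writes the matching conditions at $y=\pm b$ as a block-diagonal $4\times 4$ system with matrices $M_\pm$, and extracts the symmetry relations $c_1=-d_1$, $c_2=d_2$ (i.e.\ evenness of the eigenfunction) from the invertibility and structure of that system. You instead impose the reflection symmetry from the start: you build the one-dimensional space of decaying solutions $\phi_+$ on the half-line, prove $\phi_+(0)\neq 0$ (your observation that $c_0=0$ forces $c_1=c_2=0$, using positivity of $\sqrt{1-\tilde\lambda}\sinh(pb)+\sqrt{1-\tilde\beta}\cosh(pb)$ for $\tilde\lambda,\tilde\beta<1$), and deduce evenness directly from continuity at $0$ --- which also rules out odd eigenfunctions and immediately gives one-dimensionality. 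Your route is cleaner and more conceptual within the lemma's hypotheses $\tilde\lambda,\tilde\beta<1$; the paper's linear-algebra formulation has the advantage that the matrices $M_\pm$ and the derivation of $c_1=-d_1$ are explicitly reused in Section~\ref{Ss:P2_distrib} for the case $\beta>1$, where the interior solutions become trigonometric and positivity arguments of the kind you use (e.g.\ for excluding $c_0=0$) no longer apply verbatim, while invertibility of the suitably modified $M_\pm$ still does.
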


\begin{proof} Solutions of the differential equation in \eqref{ev_problem} have to be of the form
\begin{equation} \label{ansatz_ef}
\phi(y,\lambda)= 
	\begin{cases}
        c_2 e^{-k\sqrt{1-\tilde\lambda}y},&\qquad y\geq b, \\
		c_0\cosh(k\sqrt{1-\tilde\beta}y)+c_1\sinh(k\sqrt{1-\tilde\beta}y),&\qquad y\in [0,b],\\
		d_0\cosh(k\sqrt{1-\tilde\beta}y)+d_1\sinh(k\sqrt{1-\tilde\beta}y),&\qquad y\in [-b,0], \\
        d_2e^{k\sqrt{1-\tilde\lambda}y},&\qquad y\leq -b
	\end{cases}
\end{equation}
with $C^1$-compatibility conditions at $x=\pm b$ and continuity at $x=0$. The latter implies $c_0=d_0$ and the condition $\phi'(0+)-\phi'(0-)+k^2\alpha \phi(0)=0$ at $x=0$ translates into
\begin{equation} \label{bc_at_0}
k\sqrt{1-\tilde\beta}(c_1-d_1)+k^2\alpha c_0=0.
\end{equation}
The $C^1$-compatibility leads to the following set of four equations
\begin{align*}
	\begin{cases}
c_0\cosh(k\sqrt{1-\tilde\beta}b)+c_1\sinh(k\sqrt{1-\tilde\beta}b) & = c_2e^{-k\sqrt{1-\tilde\lambda}b}, \\
\sqrt{1-\tilde\beta} \bigl(c_0\sinh(k\sqrt{1-\tilde\beta}b)+c_1\cosh(k\sqrt{1-\tilde\beta}b)\bigr) & = -\sqrt{1-\tilde\lambda}c_2e^{-k\sqrt{1-\tilde\lambda}b},\\
d_0\cosh(k\sqrt{1-\tilde\beta}b)-d_1\sinh(k\sqrt{1-\tilde\beta}b) & = d_2e^{-k\sqrt{1-\tilde\lambda}b}, \\
\sqrt{1-\tilde\beta} \bigl(-d_0\sinh(k\sqrt{1-\tilde\beta}b)+d_1\cosh(k\sqrt{1-\tilde\beta}b)\bigr) & = \sqrt{1-\tilde\lambda}d_2e^{-k\sqrt{1-\tilde\lambda}b}.
\end{cases}
\end{align*}
These four equations can be written as 
$$
\begin{pmatrix}
M_+ &  0 \\
0 & M_-
\end{pmatrix}
\begin{pmatrix}
c_1 \\ c_2 \\ d_1 \\ d_2 
\end{pmatrix}
= \begin{pmatrix} 
-c_0 \cosh(k\sqrt{1-\tilde\beta}b) \\
-\sqrt{1-\tilde\beta} c_0\sinh(k\sqrt{1-\tilde\beta}b)\\
-c_0\cosh(k\sqrt{1-\tilde\beta}b)\\
\sqrt{1-\tilde\beta} c_0\sinh(k\sqrt{1-\tilde\beta}b)
\end{pmatrix} 
$$
with 
\begin{equation*} \label{def_matrix_M}
M_\pm = \begin{pmatrix} 
\pm\sinh(k\sqrt{1-\tilde\beta}b) &  -e^{-k\sqrt{1-\tilde\lambda}b}\\
\sqrt{1-\tilde\beta}\cosh(k\sqrt{1-\tilde\beta}b) & \pm\sqrt{1-\tilde\lambda}e^{-k\sqrt{1-\tilde\lambda}b}
\end{pmatrix}.
\end{equation*}
Since both $M_+$ and $M_-$ are invertible we see that w.l.o.g. we can choose $c_0=1$. Moreover, the structure of the linear systems yields that $c_1=-d_1$ and $c_2=d_2$. Finally, solving for $c_1, c_2$ we get 
\begin{align*}
c_1 &= -\frac{\sqrt{1-\tilde\beta}\sinh(k\sqrt{1-\tilde\beta}b)+\sqrt{1-\tilde\lambda}\cosh(k\sqrt{1-\tilde\beta}b)}{\sqrt{1-\tilde\beta}\cosh(k\sqrt{1-\tilde\beta}b)+\sqrt{1-\tilde\lambda}\sinh(k\sqrt{1-\tilde\beta}b)}, \\
c_2 &= e^{k\sqrt{1-\tilde\lambda}b}(c_1 \sinh(k\sqrt{1-\tilde\beta}b)+\cosh(k\sqrt{1-\tilde\beta}b)).
\end{align*}
Inserting $c_1$, $d_1=-c_1$ and $c_0=1$ into \eqref{bc_at_0} yields the condition \eqref{this_makes_an_eigenvalue} as claimed. It also shows that the eigenspace is one-dimensional (the only degree of freedom is the choice of $c_0$ which we took to be $1$ w.l.o.g.).
\end{proof}

\bigskip

\subsection*{Acknowledgments}
Funded by the Deutsche Forschungsgemeinschaft (DFG, German Research Foundation) -- Project-ID 258734477 -- SFB 1173.

\bigskip

\bibliographystyle{plain}

\bibliography{BIR_Traveling_waves_for_a_quasilinear_wave_equation_final.bib}

\end{document}